\newcommand{\R}{{{\mathbb {R}}}}
\newcommand{\Z}{{\mathbb Z}}
\newcommand{\U}{{\mathbb U}}
\newcommand{\HH}{{\mathbb H}}
\newcommand{\epsil}{u}
\newcommand{\cech}{\check}
\newtheorem{Theorem}{Theorem}
\newtheorem {lemma} [Theorem]    {Lemma}
\newtheorem {corollary}  [Theorem]    {Corollary}
\newtheorem {proposition}[Theorem]    {Proposition}
\newtheorem {theorem}[Theorem]    {Theorem}
\let\oldmarginpar\marginpar
\renewcommand\marginpar[1]{\-\oldmarginpar[\raggedleft\scriptsize #1]%
{\raggedright\scriptsize #1}}
\newcommand {\eps}{\varepsilon}
\begin{document}

\title{Coupling the Gaussian Free fields with free and with zero boundary conditions via common 
 level lines}
\author{Wei Qian}
\author{Wendelin Werner}
\address {
 Center of Mathematical Sciences, 
Wilberforce Rd., Cambridge CB3 0WB, United Kingdom}
\email {wq214@cam.ac.uk}

\address {
Department of Mathematics,
ETH Z\"urich, R\"amistr. 101,
8092 Z\"urich, Switzerland}

\email{wendelin.werner@math.ethz.ch}

\begin {abstract}
We point out a new simple way to couple the Gaussian Free Field (GFF) with free boundary conditions in a two-dimensional domain with the GFF with zero boundary conditions in the same domain: 
Starting from the latter, one just has to sample at random all the signs of the height gaps on its boundary-touching zero-level lines (these signs are alternating for the zero-boundary GFF) in order to obtain a free boundary GFF. 
Constructions and couplings of the free boundary GFF and its level lines via soups of reflected Brownian loops and their clusters
are also discussed. Such considerations show for instance that in a domain with an axis of symmetry, if one looks at the overlay of a single usual Conformal Loop Ensemble CLE$_3$ with its own symmetric image, one obtains the CLE$_4$-type collection of level lines of a GFF with mixed zero/free boundary conditions in the half-domain. 
\end {abstract}

\maketitle

\tableofcontents

\section{Introduction}

A large number of recent  papers (see for instance \cite {SchSh,SchSh2,Dub,SheffieldQZ,MT,MS1,MS2,MS3} and references therein)  have highlighted the close connection between Schramm-Loewner evolutions (SLE) and their variants such as SLE$_\kappa (\rho)$ processes or the conformal loop ensembles (CLE) with the 
two-dimensional Gaussian free field (GFF). This has led to a much better understanding of the geometric structures underlying the GFF, as well as to new results about SLE and CLE.

A special role is played by the SLE$_4$ process, the SLE$_4(\rho)$ processes and the conformal loop ensemble CLE$_4$, because, as pointed out by Schramm and Sheffield 
for SLE$_4 (\rho)$ curves in \cite {SchSh,SchSh2} (see also Dub\'edat \cite {Dub}),
they can be viewed as level lines of the GFF with constant or piecewise constant boundary conditions,
or more precisely as lines along which the GFF has a certain height-gap $2 \lambda = \sqrt {\pi /2}$.  

Gaussian Free Fields with other natural boundary conditions than constant boundary conditions are of course of wide interest. 
One prime example is the GFF with free boundary conditions in a domain $D$ (that we will refer to from now on as 
the GFF with Neumann boundary conditions, or the Neumann GFF -- we will also refer to the GFF with zero boundary conditions as the Dirichlet GFF). 
Recall that, as opposed to the Dirichlet GFF, the Neumann GFF is only defined up to an additive constant (one way to think of this is that one knows the gradient 
of a generalized fuction, but not the function itself). However, away from the boundary of $D$, the (generalized) gradient of this Neumann GFF is absolutely continuous with respect to 
the (generalized) gradient of the Dirichlet GFF, so that it is also possible to make sense of the level-lines of this field, that are also SLE$_4$-type curves
(the precise description for mixed Dirichlet-Neumann boundary conditions has for instance been given by Izyurov and Kyt\"ol\"a in \cite {IK}). 
The Neumann GFF also plays a central role in the  zipping/welding approach to SLE (see Sheffield's quantum zipper \cite {SheffieldQZ}), or via  the 
interplay between SLE paths and Liouville Quantum Gravity, see for instance \cite {MT}.

It seems that some simple CLE-type constructions or descriptions of the Neumann GFF and their consequences 
may have been overlooked, and one of the goals of the present paper is to fill this gap. More precisely, in this direction we will: 

\medbreak
{\em (1) Describe the collection of level lines of the Neumann GFF.} The fact that this can be done is not really surprising (for instance given the results in \cite {IK}), but the consequence (2) was maybe more unexpected. 

\medbreak
{\em (2) Point out a simple coupling of the Neumann GFF with a Dirichlet-GFF in the same domain}: The difference between the two fields in this coupling is a function that is constant by parts, and  the two fields will share a number of level lines. Only the signs of the height gaps for the boundary touching level lines of height $0$ for the Dirichlet GFF will differ, so that the difference between the two GFFs will take its values in $4\lambda \Z$ (see Figure \ref {pic00} for a sketch). More precisely, start with a Dirichlet GFF $\Gamma$ and define the collection $A$ of its boundary touching level lines at height $0$ (these are $\lambda$ versus $-\lambda$ height-gap lines). The complement of $A$ consists of a family of cells in which $\Gamma$ behaves like 
a GFF with constant boundary conditions $\lambda$ or $-\lambda$ as on the left of Figure \ref {pic00} (this means that inside each cell, one considers a GFF with Dirichlet 
boundary conditions and adds the constant function $\lambda$ or $-\lambda$ depending on the cell). Here, the height gaps between two neighboring cells are alternating 
between $2 \lambda$ and $- 2 \lambda$ so that the height of all cells stays in  $\{- \lambda, + \lambda \}$.
Now, toss an independent fair coin for each arc of $A$ in order to decide the corresponding height-gap between the two cells that it separates. This leads to the picture on the right of Figure \ref {pic00}. Theorem \ref {mainthm} will state that adding an independent GFF in each cell to these new heights defines a scalar field which is exactly  a realization of a Neumann GFF
(i.e., its gradient is that of a Neumann GFF). In other words, if one adds to the Dirichlet GFF the function that is constant in each cell and equal to the difference between the new and old heights in 
that cell, one does obtain a Neumann GFF.

\begin{figure}[ht!]
\includegraphics[scale=0.7]{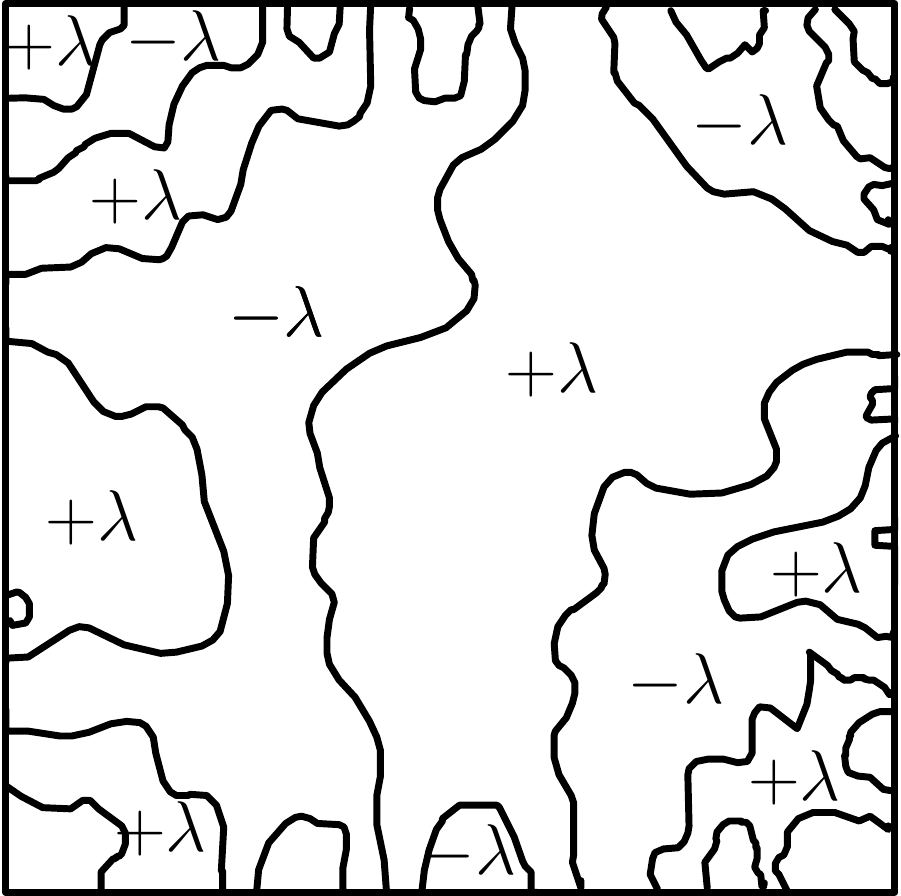}
\quad
\includegraphics[scale=0.7]{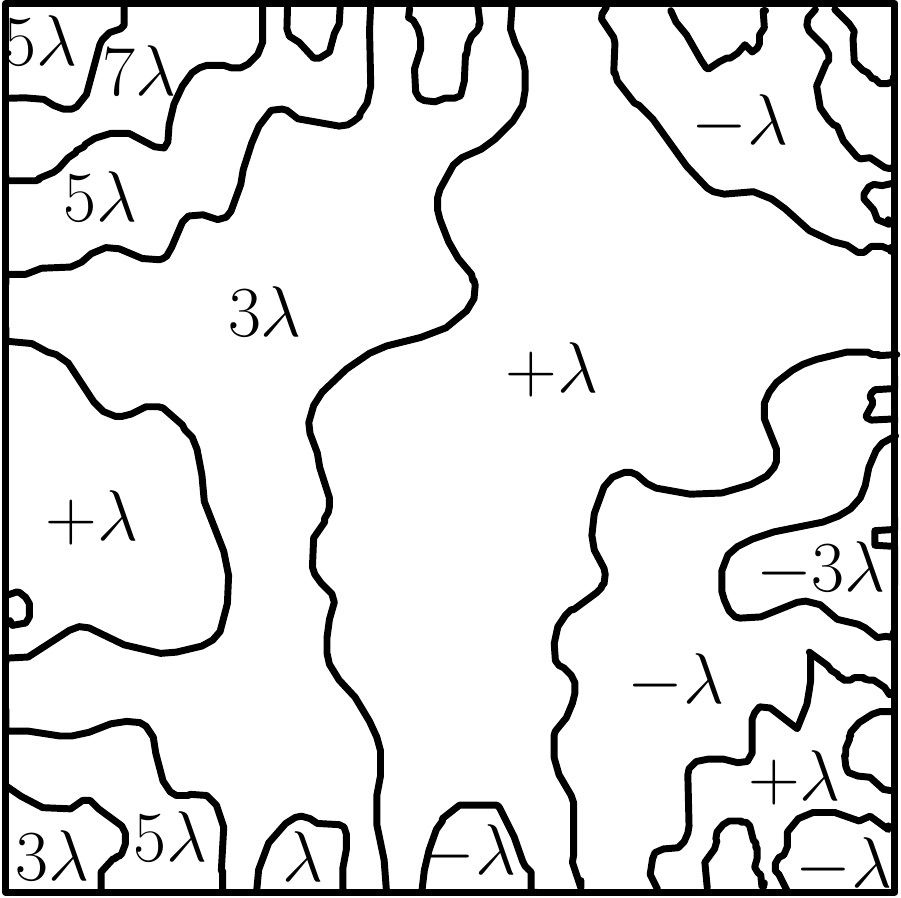}
  \caption {Sketch of the coupling: On the left, the boundary touching level lines of the GFF with Dirichlet boundary conditions. Adding to these values an independent GFF in each of the cells constructs a GFF with Dirichlet boundary conditions in the square. 
  On the right, we keep the same set of boundary touching level lines but the signs of the height gaps are chosen at random. Adding to these values an independent GFF in each of the cells constructs a GFF with Neumann boundary conditions in the square}
  \label{pic00}\end{figure}

\medbreak
The existence and properties of this 
coupling between Neumann GFF and Dirichlet GFF  shows again -- if needed -- how natural these SLE$_4$ level lines are in order to 
connect and understand these two fields. 
%Note that this type of coupling results seems a priori specific to the two-dimensional case. 

\medbreak

Another approach to the coupling between SLE$_4$ (or CLE$_4$) with the GFF with Dirichlet boundary conditions uses the Brownian loop-soup introduced in \cite {LW}: 
On the one hand CLE$_4$ can be constructed as outermost boundaries of Brownian loop-soup clusters of appropriate intensity \cite {ShW}, so that it is a deterministic function of this loop-soup. 
On the other hand \cite {LJ}, one can show that the (appropriately renormalized) occupation time measure of the Brownian loop soup is distributed like the (appropriately defined) square of a Dirichlet GFF -- 
so that this square of the Dirichlet GFF is also a deterministic function of the loop-soup. Furthermore \cite {Lupu}, it is possible to use the loop-soup in order to reconstruct the Dirichlet GFF itself (loosely speaking, 
by tossing one independent coin for each loop-soup cluster to choose a sign). Combining these constructions provides a coupling of CLE$_4$ with the GFF. 
As explained in our paper \cite {QianW},  it is possible to show (using also some further results of Lupu \cite {Lupu2} and the relation to Dynkin's isomorphism theorem) that this coupling can be made to coincide with the Miller-Sheffield coupling where the CLE$_4$ are the level lines at height $\pm \lambda$ of the Dirichlet GFF (see \cite {ASW} for this). We will also address in the present paper the free boundary GFF counterparts of all these facts. Not surprisingly, it will involve soups of reflected Brownian loops: 

\medbreak

{\em (3) We will explain that all the features about loop-soup clusters, their decompositions and the relation to the GFF, as shown in \cite {QianW} for Dirichlet boundary conditions have natural analogs for  Neumann boundary conditions.}

\medbreak
Recall that one consequence of the Brownian loop-soup approach to CLE$_\kappa$ is that it enables to derive relations between the various CLE$_\kappa$'s that seem out of reach by an SLE$_\kappa$-based definition of the conformal loop ensembles. More precisely, since a CLE$_{\kappa(c)}$ for some explicit function $\kappa (c) \in (8/3, 4]$ is the collection of outermost boundaries of loop-soup clusters with intensity $c \le 1$, one can construct it from two independent samples of CLE$_{\kappa (c')}$ and CLE$_{\kappa (c'')}$ when $c' + c'' = c$ by looking at the clusters in the union of these two CLEs (a noteworthy example is the fact that one can reconstruct a CLE$_4$ using the overlay of two independent CLE$_3$'s, which are known to be the scaling limit of Ising model interfaces, see \cite {BH} and the references therein).  
This makes it natural to define also such  ``semi-groups'' of CLEs for Neumann-type boundary conditions: 

\medbreak
{\em 
(4) We will explain how  to naturally define Neumann-type CLE$_\kappa$'s for all $\kappa \in (8/3, 4]$. This for instance leads to simple coupling between a usual (Dirichlet) CLE$_3$
in the unit disc and a GFF with mixed Dirichlet-Neumann conditions in the half-disc (Dirichlet on the half-circle, Neumann on the diameter $I$) 
by looking at the overlay in the half-disc of the CLE$_3$ with its symmetric image with respect to the diameter $I$.}

\medbreak

The organization of the paper is the following: 
We will first recall background on 
the collections of level lines for the Dirichlet GFF in Section \ref {S2}. In Section \ref {S3}, we state the coupling of the Neumann GFF with the Dirichlet GFF, Theorem \ref {mainthm}, and make various 
comments. We then prove  Theorem~\ref {mainthm} in Section \ref {S4}. 
 Then, in Section \ref {S5}, we discuss the relation and interpretation in terms of soups of reflected Brownian motions, and to the construction of other ``reflected'' CLEs. 
We then discuss results related to the fact that the Neumann GFF is defined up to an additive constant, and we  conclude with some further comments about related work and work in progress.

Since some arguments that we will use are directly adapted from those that have been developed and used in the context of the 
Dirichlet GFF, we choose to provide the more detailed proofs only for the pivotal and novel parts. 

\section{Background: The ALE of a Dirichlet GFF}
\label {S2}
In this section, we will describe what we will refer to as the ALE\footnote{This terminology, referring among other things to Arc Loop Ensembles, had been proposed by J. Aru and Avelio S. in the course of the preparation of \cite {ASW}, and at that time the third coauthor was reluctant
to introduce such a new terminology, especially in relation with the Beatles of the free field introduced in \cite {ASW}... We finally opt here for this terminology to 
stress that the work done in \cite {ASW} was influential for the present one. 
ALE should be ideally come with a Chilean or Estonian accent.} of a GFF with Dirichlet boundary conditions. 
We survey here known facts that are part of  the general framework of level lines of the GFF as first pointed out 
in \cite {SchSh,SchSh2,Dub}, see also \cite {WangWu1,WangWu,PowellWu,Wgff} for survey and variants. For all of this section, we refer to  \cite {ASW} for background and details (further related items are discussed in \cite {AS,ALS}). 

{
Recall that the GFF with Dirichlet boundary conditions in $D$ can be viewed as a centered 
Gaussian process $\Gamma$ indexed by the set of continuous functions $f$ in $D$ with compact support in $D$. 
The covariance of this process (which therefore defines the law of $\Gamma$) is 
$$ E [ \Gamma (f) \Gamma (g) ] =  \int_{D \times D} G_D (z, z') f(z) g(z') dz dz'$$
where $G=G_D$ is the Green's function in $D$ with Dirichlet boundary conditions  (here and in the sequel, $dz$ will stand for the two-dimensional Lebesgue measure). 
Throughout the paper, we will use the normalization of Green's functions $G(z,z')$ such
that $G(z,z') \sim (-2\pi)^{-1} \log |z-z'|$ as $z' \to z$ when $z$ is in the interior of the considered domain. 
With this normalization, 
the natural height-gap as introduced by \cite {SchSh,SchSh2,Dub} is equal to
$2 \lambda$, where $\lambda = \sqrt { \pi / 8}$ (this value of $\lambda$ will be fixed throughout the paper). 

We can note that one can in fact define $\Gamma$ on a larger set of functions (or measures). For instance, the previous definition 
obviously works also for the set of continuous functions $f$ in $D$ such that 
$$  \int_{D \times D} G_D (z, z') f(z) f(z') dz dz' <  \infty $$
and we will implicitely use this extension at various instances in the paper (when we define the ALE decomposition of the GFF for instance, we will consider 
continuous bounded functions in a bounded domain $D$). 
However, when one knows the process $\Gamma$ defined on the set of 
continuous functions with compact support, one can extend $\Gamma$ to this larger set of functions by a continuity argument.
We will use the same procedure when we will consider and define the Gaussian Free Fields with different boundary conditions (Neumann, or mixed Neumann-Dirichlet) in 
a domain $D$ as random processes indexed by the set of continuous compactly supported functions in $D$. 
}

\medbreak

It is useful to first recall the Miller-Sheffield \cite {MS} coupling of CLE$_4$ with the GFF (see also \cite {ASW} for details): 
Consider a simply connected domain $D$ with non-polar boundary, and a simple non-nested CLE$_4$ in $D$. Recall that this is a random collection $(\gamma_j)_{j \in J}$ of disjoint simple loops in $D$ such that: 

(a) Each given point in $D$ is almost surely surrounded by exactly one loop of the CLE$_4$. 

(b) The law of the CLE$_4$ is invariant under any conformal automorphism of $D$. 

Note that (a) and (b) imply that the set of points that are surrounded by no loop (this set is called the CLE$_4$ carpet) has zero Lebesgue measure. 
In fact, it can be shown that its Hausdorff dimension is almost surely equal to  $15/16$ \cite {NW,SSW}. 
Once one has sampled this CLE$_4$, one can toss an independent fair coin $\eps_j \in \{ -1 , +1 \}$ for each CLE$_4$ loop, and consider for each realization of the CLE$_4$, the random function 
that is equal to $2 \lambda \eps_j$ in each domain $O_j$ encircled by $\gamma_j$, and to $0$ in $D \setminus \cup_j O_j$, i.e., the function $\sum_j 2 \lambda \eps_j 1_{O_j}$. 

Then (i.e., conditionally on the CLE$_4$), inside of each $O_j$, define an independent GFF $\Gamma_j$ with Dirichlet boundary conditions. The coupling states that the field 
$ \sum_j ( \Gamma_j + 2 \lambda \eps_j 1_{O_j} ) $ is a Dirichlet GFF in $D$. Note that when $f$ is a compactly supported function in $D$, for each $j$,
the function $f$ restricted to $O_j$ is a continuous bounded function in $O_j$, so that one can define $\Gamma_j (f 1_{O_j})$. Note that conditionally on the CLE$_4$, the series   
$$ \sum_j  ( \Gamma_j (f 1_{O_j}) + 2 \lambda \eps_j \int_{O_j} f(z) dz ) $$ 
is a series of independent random variables with zero mean, and the usual $L^2$ criterion ensures that it almost surely converges.

One important feature of this coupling, also pointed out by Miller and Sheffield is that the CLE$_4$ and the labels $\eps_j$ are in fact deterministic functions of the GFF
that they construct in this way (see again \cite {ASW}). 

Heuristically, the CLE$_4$ loops can be viewed as level lines, or more precisely as height-gap lines: The coupling shows indeed that on the inside of $\gamma_j$, the GFF behaves like a GFF with boundary conditions $2\eps_j \lambda$, while on the outside of $O_j$, $\gamma_j$ plays the role of a Dirichlet boundary condition for the GFF. The curves $(\gamma_j)$ can be therefore informally described as
$2 \lambda$ v. $0$ (or $-2 \lambda$ v. $0$) height-gap lines. They are the first set of such loops that one encounter when one starts from $\partial D$ (where the height of the GFF is $0$). In the sequel, we will say that refer to an $a$-level line for an $a+\lambda$ v. $a-\lambda$ height-gap line (so that the CLE$_4$ loops are $\lambda$ level lines, or $-\lambda$ level lines).    

The nested CLE$_4$ is then obtained by iterating the same construction for each $\Gamma_j$ and so on -- see for instance \cite {ASW} and the references therein.

\begin{figure}[ht!]
\includegraphics[scale=0.75]{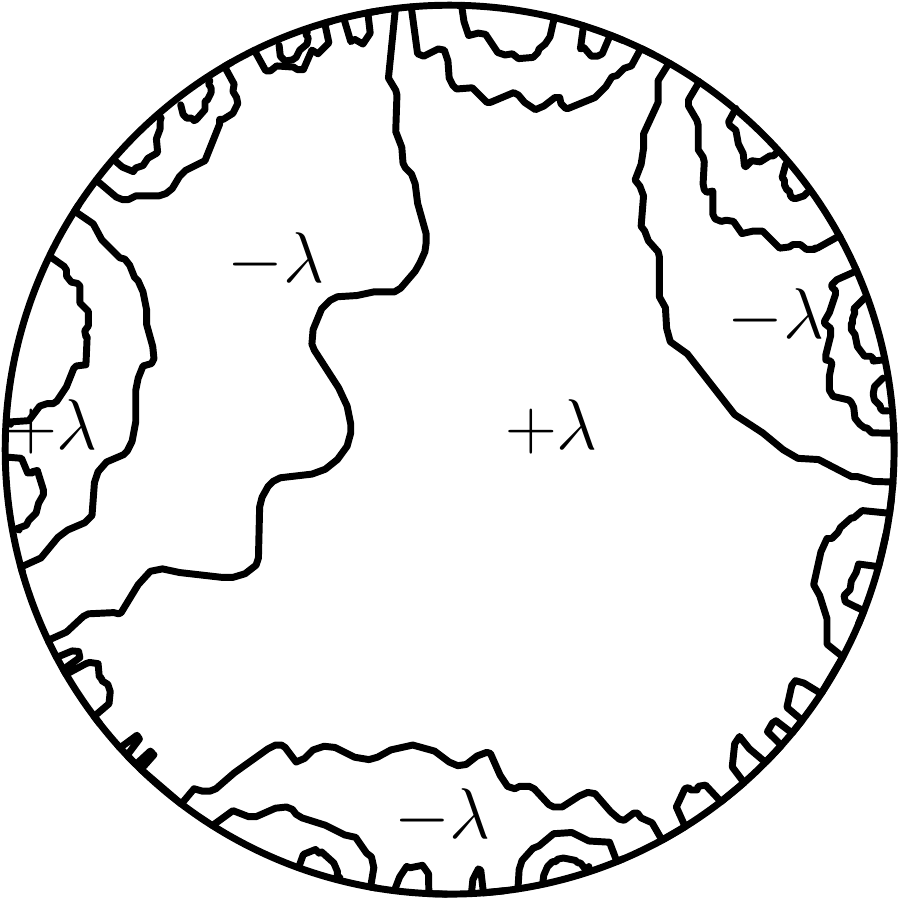}
\quad 
\includegraphics[scale=0.75]{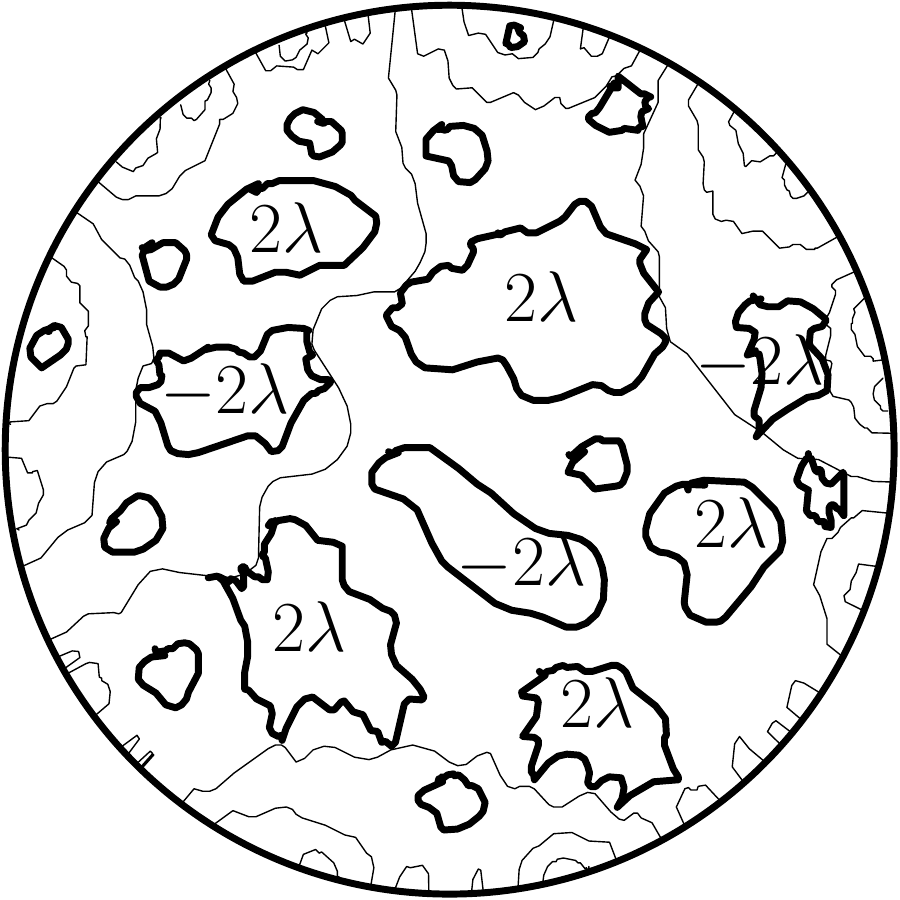}
  \caption {Structures within a Dirichlet GFF: The ALE with the alternating heights $\pm \lambda$, some of the embedded CLE$_4$ loops with their heights $\pm 2 \lambda$}
  \label{pic1}\end{figure}
\medbreak

We now describe the closely related coupling that will play a very important role in the present paper. Instead of looking at the first set of loops with height $\lambda$ or $-\lambda$ that one encounters when starting from $\partial D$ (this is the CLE$_4$ depicted on the right-hand side of Figure \ref {pic1}), 
it turns out to be possible to make sense rigorously of the collection of all first level lines of height $0$ that one encounters when one starts from the boundary -- these are 
the level lines at height $0$ that do intersect $\partial D$ (depicted on the left-hand side of Figure \ref {pic1}). 
As explained in \cite {ASW}, it is in some sense the most natural and sparsest level-hitting set that one can define in a Dirichlet GFF. 
This set $A$ can be defined by a branching family of SLE$_4 (-1, -1)$ processes, and is denoted by $A_{-\lambda, \lambda}$ in \cite {ASW} -- it is also one very special particular case of the boundary conformal loop ensembles defined in \cite {MSW} (and has also appeared before, see \cite {SchSh2,WangWu}). This is what we will call an ALE in the present paper. Let us first summarize its main properties: 

(a) An ALE is formed by a countable union of disjoint SLE$_4$-type arcs in $D$ joining two boundary points of $D$. It is locally finite (each compact subset of $D$ intersects only finitely many ALE arcs). 

(b) The law of the ALE is invariant under the group of conformal automorphisms of $D$.

A consequence of these items is that the Hausdorff dimension of the ALE is that of individual SLE$_4$ curves i.e., $3/2$ (see \cite {Be,RS}).  
Furthermore, each ALE arc will be a shared boundary between two adjacent connected components of the complement $O$ of the ALE in $D$. In particular, the family of connected components 
of $O$ comes naturally equipped with a tree-like structure given by the adjacency relation.

We will sometimes refer to the connected components $O_j$ as the cells of the ALE. The construction via SLE$_4 (-1)$ shows that the dimension of the intersection of the boundary of a cell with the boundary of $D$ is equal to 
$3/4 $ when the boundary of $D$ is smooth (using for instance Theorem 1.6 of \cite {MWu}). 

\medbreak 

One can then couple a GFF with Dirichlet boundary conditions with an ALE  as follows: Toss first one fair coin $\eps \in \{ -1 , 1 \}$ in order to decide the label of the connected component of $O$  that contains the origin, and then define deterministically the labels $\eps_j$ of all the other components $O_j$ of $O$ in such a way that any two adjacent components have opposite labels (with obvious terminology, $\eps_j$ will be equal to $\eps$ on the even components $O_j$ and 
to $- \eps$ on the odd ones). Define also an independent GFF $\Gamma_j$ with Dirichlet boundary conditions in each $O_j$. Then, it turns out that the field 
$$ \Gamma := \sum_j ( \Gamma_j + \lambda \eps_j 1_{O_j} ) $$ is a GFF with Dirichlet boundary conditions in $D$. 
Again (see \cite {ASW} and the references therein for all these facts), the ALE and its labels are  in fact a deterministic function of  $\Gamma$ (we can therefore call 
``the'' ALE $A=A (\Gamma)$  of $\Gamma$). One way to characterize $A$ as a deterministic function of $\Gamma$ is to say that it is the collection
of all the $0$-level-lines of $\Gamma$ that touch the boundary of $D$.

Note that there are two ways to think of the ALE. For each connected component $O_j$, the outer boundary of $O_j$ will consist of the concatenation of ALE arcs, and form one single level line that is bouncing off from the boundary of $D$. In other words, one could also view the ALE as a collection of loops. But, in view of the next section,  we choose to define the ALE as a collection of disjoint level-arcs (i.e. level lines inside the domain that start and end on the boundary).

As explained in \cite {ASW}, starting from an ALE, it is easy to iterate the procedure by considering the ALE of each $\Gamma_j$ and so on, until one finds the first $\pm \lambda$ level lines: 
One can then  view the CLE$_4$ as an iterated nested ALE (see again Figure \ref {pic1} for a sketch). Note that the number of iterations needed before discovering the CLE$_4$ loop that surrounds a given point is random (which explains why the dimension of the CLE$_4$ carpet is larger than $3/2$).

\section {The  coupling of the two fields: Statement and consequences}
\label {S3}

Recall that the  Neumann GFF $\Theta$ in a domain $D$ is a similar (rather rough) conformally invariant centered Gaussian 
random field as the GFF with Dirichlet boundary conditions,
but that it is defined only up to an additive constant. In other words, one defines only its gradient, or equivalently, 
it is possible to make sense of the centered Gaussian random variable $\Theta (f)$ when $f$ is a continuous function with compact support 
such that $\int_D f(z) dz = 0 $, but one can not talk for instance of $\Theta (1)$ or of the mean value of $\Theta$ on a ball. 

Since we will only consider the Neumann GFF in simply connected domains, one handy quick way to define it is to first define it in the upper half-plane and to then use 
conformal invariance to extend the definition to an simply connected domain $D$. 
In the upper half-plane $\HH$, the Neumann GFF $\Theta$
is the centered Gaussian process $(\Theta (f))$, defined on the set of continuous functions $f$ with mean $0$ with compact support in $\HH$,
with covariance given by
$$ E [ \Theta (f) \Theta (g) ] = \int_{\HH \times \HH} f(z) g(z') G_\HH^{\mathcal N} (z, z') dz dz'.$$ 
The ``Neumann Green's function''  $G_\HH^{\mathcal N}$ that we use here is defined as 
$$ G_\HH^{\mathcal N} (z,z') :=  (2 \pi)^{-1} (\log |x-y| + \log | x - \overline y| ).$$ 

On the other hand, if we are actually given a scalar field (i.e., a real-valued process $(\Lambda(f))$ 
defined on the set of smooth functions supported on compact subsets of $D$
and that is linear with respect to $f$), we can call it a realization of the Neumann GFF if it behaves the same way as
the Neumann GFF $\Theta$ when one restricts it to the set of  functions with zero mean 
(then, adding any constant function to $\Lambda$ would indeed provide another realization of the same Neumann GFF $\Theta$).

We are now almost ready to explain our coupling of the Dirichlet GFF $\Gamma$ with the Neumann GFF: Consider first 
the decomposition of $\Gamma$ with the ALE $A$, the fields $\Gamma_j$ and the labels $\eps_j$ as described in Section \ref {S2}. We then also define
a simple random walk indexed by the tree structure of the ALE. More precisely, we define a new set of labels $\eta_j$ with odd values iteratively as follows: 
The label of the connected component that contains the origin is chosen to be $\eps$, and for each connected component adjacent to this
first connected component, one tosses an independent fair coin to decide if its label is  $\eps + 2$ or $\eps -2$. Then iteratively, 
toss another independent fair coin for each  connected component  adjacent to the previously labelled components to decide whether their labels differ by $+2$ or $-2$. 
In other words, when one follows a path of adjacent connected components that go to the boundary, instead of seeing the alternating $\eps_j$ labels $\eps$, $-\eps$, $\eps$ etc., one now sees a simple random walk with jumps of $\pm 2$. We call this new set of labels $(\eta_j)$. See Figure~\ref {pic00} and Figure~\ref {pic2} for an illustration (with labels multiplied by $\lambda$). Note that with this definition, $\eta_j - \eps_j \in 4 \Z$ for all $j$.

\begin{figure}[ht!]
\null
\vskip 5mm
\includegraphics[scale=0.8]{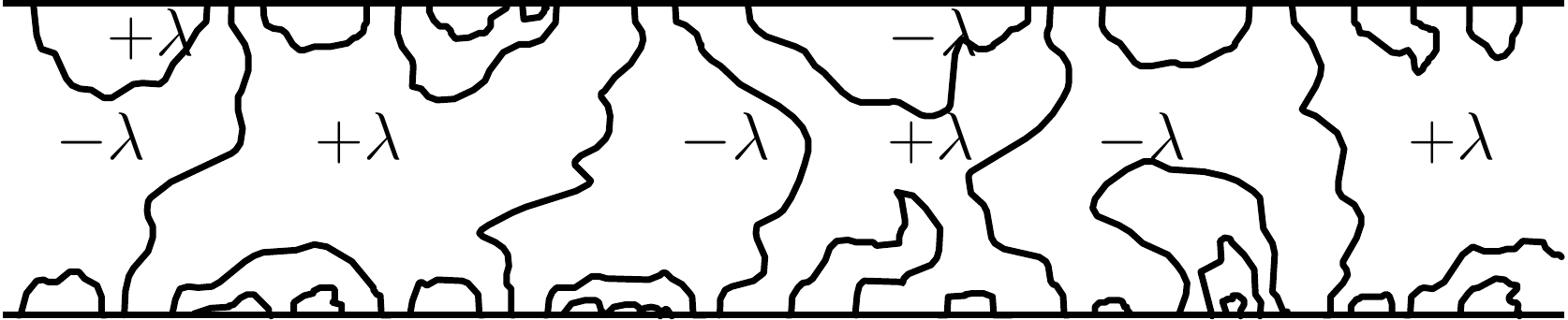}
\vskip 5mm
\includegraphics[scale=0.8]{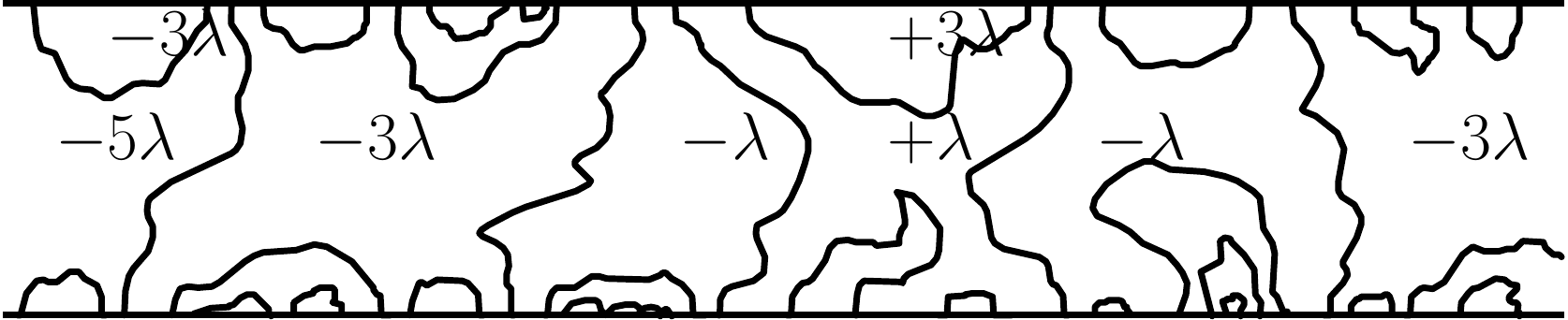}
  \caption {The analogue of Figure \ref {pic00} in a strip:
  Illustration of the Dirichlet and Neumann  coupling.  
  Top: The ALE with the corresponding alternating heights. Adding to those heights independent GFFs in each tile does construct a GFF with Dirichlet boundary conditions in the strip. 
  Bottom: The same ALE  but with the randomly chosen signs for height-gaps. 
  Adding to those heights independent GFFs in each tile does construct a GFF with Neumann boundary conditions in the strip.}
  \label{pic2}\end{figure}

\begin {theorem}[The Neumann GFF - Dirichlet GFF coupling]
\label {mainthm}
The field $\Lambda := \sum_j (\Gamma_j + \lambda \eta_j 1_{O_j})$ is a realization of a GFF with Neumann boundary conditions. \end {theorem}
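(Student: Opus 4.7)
The plan is to verify that $\Lambda$, viewed as a random linear functional on mean-zero compactly supported continuous test functions $f$ in $D$, coincides in law with a Neumann GFF.

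The first step is $L^2$-well-definedness. Writing $\mu_j^f := \int_{O_j} f$ and using the mean-zero condition $\sum_j \mu_j^f = 0$, I would expand $\eta_j = \eps + 2\sum_{e \in P(j)} \xi_e$ (where the $\xi_e$ are i.i.d.\ fair $\pm 1$ coins indexed by the edges of the adjacency tree and $P(j)$ is the path from the root cell to $O_j$) to rewrite
\[
\sum_j \eta_j \mu_j^f \;=\; 2\sum_{\alpha \in A} \xi_\alpha \int_{R_\alpha} f,
\]
where $R_\alpha$ denotes one of the two connected components of $D \setminus \alpha$. The $L^2$-convergence of $\Lambda(f) = \sum_j(\Gamma_j(f 1_{O_j}) + \lambda \eta_j \mu_j^f)$ then follows from the analogous Dirichlet estimates of Section \ref{S2} together with the a.s.\ finiteness of $\sum_\alpha (\int_{R_\alpha} f)^2$. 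Conditioning on $A$ yields
\[
E[\Lambda(f)\Lambda(g) \mid A] \;=\; \sum_j \int_{O_j \times O_j} G_{O_j}(z,w) f(z) g(w)\, dz\, dw \;+\; 4\lambda^2 \sum_{\alpha \in A} \Big(\int_{R_\alpha} f\Big)\Big(\int_{R_\alpha} g\Big),
\]
whose expectation should match $\int G_D^{\mathcal N} f g$.

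To establish the matching and Gaussianity simultaneously, I would produce an independently sampled Neumann GFF $\Theta$ in $D$ and decompose it along its own ALE. Using the SLE$_4(\rho)$ level-line description for the mixed Dirichlet--Neumann setting (\cite{IK}) together with a branching SLE$_4(-1,-1)$ construction mirroring that of Section \ref{S2}, one extracts from $\Theta$ a collection $A^\Theta$ of boundary-touching $\pm 2\lambda$-height-gap arcs. Showing $A^\Theta \stackrel{d}{=} A$ reduces to checking that the relevant SLE drivers coincide. By the domain Markov property of $\Theta$, conditional on $A^\Theta$ together with odd-integer labels $\eta^\Theta_j$ recording the constant height on each cell, the field decomposes as $\sum_j(\tilde\Gamma_j + \lambda \eta^\Theta_j 1_{O_j^\Theta})$ with independent Dirichlet GFFs $\tilde\Gamma_j$ in the cells; the crucial point is that once the labels are fixed, they specify a piecewise-constant function on $\partial D$ that is compatible with Neumann-zero (constants having zero normal derivative) and with Dirichlet value $\lambda \eta^\Theta_j$ on the ALE arcs bounding cell $O_j$, so the harmonic part in each cell collapses to this constant. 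Matching then reduces to showing that the joint law of $(\eta^\Theta_j)$ is the independent-step random walk of the theorem.

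The main obstacle is this last point. Intuitively, in the Dirichlet case the zero boundary on $\partial D$ forces the labels to alternate, so that the field returns to $0$ at the boundary; in the Neumann case the free boundary removes this constraint, and each arc-crossing independently flips the label by $\pm 2$. Making this rigorous requires, at each step of the branching construction of $A^\Theta$, a careful SLE$_4(\rho)$ analysis of the conditional law of the label on the newly-discovered cell, using the exact form of the SLE weights coming from the Neumann boundary condition (as opposed to the Dirichlet one, which forces the deterministic alternation). Once this label law is identified, matching the joint distributions $(A^\Theta, \tilde\Gamma_j, \eta^\Theta_j) \stackrel{d}{=} (A, \Gamma_j, \eta_j)$ concludes $\Lambda \stackrel{d}{=} \Theta$.
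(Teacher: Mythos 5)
Your plan has the right objects in view, and the covariance calculation is sound: for mean-zero $f,g$ the re-randomization of the heights contributes $4\lambda^2\sum_{\alpha}\bigl(\int_{R_\alpha}f\bigr)\bigl(\int_{R_\alpha}g\bigr)$ to the conditional covariance (and the choice of side $R_\alpha$ is immaterial because $f,g$ have zero mean). You also correctly identify that matching covariances is insufficient, since conditionally on $A$ the random-walk part is a Rademacher sum and not Gaussian. But your plan defers precisely the pivotal step: you flag as the ``main obstacle'' the identification of the conditional law of $(\eta_j^\Theta)$ given $A^\Theta$ as an independent $\pm2$ walk, and then only say this ``requires a careful SLE$_4(\rho)$ analysis'' without supplying it. That analysis \emph{is} the new technical content of the theorem. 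The paper carries it out in the opposite (forward) direction: Lemma~\ref{l3} shows, via a martingale argument on the evolving harmonic function $h_t$ along a single SLE$_4(-1)$ curve with one independent fair coin per excursion off the Neumann boundary, that the resulting field has the law of a GFF with mixed Dirichlet--Neumann boundary conditions. This is then iterated across the branching tree and transferred to purely Neumann conditions by a limit. Building the field forward, rather than decomposing an a priori given one, is how the label law is actually pinned down.

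There is a second, more structural difficulty with your reverse route. You propose to ``decompose $\Theta$ along its own ALE,'' but a purely Neumann GFF has no Dirichlet boundary segment from which to seed the branching SLE$_4(-1,-1)$ exploration, and it is only defined up to a global additive constant. As Proposition~\ref{propshift} and Lemma~\ref{smalllemma} make explicit --- and note those are derived \emph{after} Theorem~\ref{mainthm} in the paper, not before --- the ALE of a Neumann GFF is only well-defined once a reference height is fixed, and there is a one-parameter family of possible ALEs. Your plan never addresses how to make this choice, and risks presupposing a version of the very decomposition the theorem is meant to establish. The paper sidesteps this by working throughout with \emph{mixed} Dirichlet--Neumann boundary conditions, where a short Dirichlet arc with constant value both anchors the additive constant and provides a root for the branching tree, and then lets the Dirichlet arc shrink to a point; the stabilization argument in the final step of the proof (using the loop-soup coupling to show that $\tilde\Gamma_u$ and the shifted $\Lambda_u$ converge almost surely) is what transports the mixed-boundary result to the fully Neumann setting.
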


Recall that the field $\Gamma = 
\sum_j (\Gamma_j +  \lambda \eps_j 1_{O_j})$ is a GFF with Dirichlet boundary conditions. This theorem therefore provides
a coupling of the GFF $\Lambda$ with Neumann boundary conditions with the GFF $\Gamma$, such that $ \Lambda- \Gamma$ is the function that is exactly equal to $\lambda (\eta_j- 1)$ on the even connected components $O_j$ of the ALE of $\Gamma$, and to $ \lambda (\eta_j +1 )$ on the odd ones.

Note again that there is no problem in the theorem in order to define $\Lambda (f)$ for continuous functions with compact support in $D$ because almost surely, the support of $f$ intersects only finitely many $O_j$'s. 

It is important to stress that this coupling is very different from the coupling obtained by first reading off the boundary values of the Neumann GFF i.e., the harmonic extension to $D$ of these boundary values, and  to view the Neumann GFF as the sum of this random harmonic function (that is defined up to additive constants, just as the Neumann GFF is) with an independent Dirichlet GFF. In the coupling described in Theorem \ref {mainthm}, the difference $\Lambda - \Gamma$ is not a continuous function in $D$, and furthermore
the Dirichlet GFF $\Gamma$ and the difference $\Lambda -\Gamma$ are very strongly correlated (the discontinuity lines of the latter are the boundary-touching level lines of the former).

Note that in the construction of Theorem \ref {mainthm}, the law of  $\Lambda$ is indeed conformally invariant if one views it as defined up to constants (as the special role played by the origin in the construction disappears): The image of $\Lambda$ via a given conformal automorphism of $D$ that maps some point $z_0$ onto the origin is distributed 
exactly like $\Lambda$ shifted by a random additive 
constant field with values in $2 \lambda \Z$ corresponding to the height of the ALE cell for $\Lambda$ that contained $z_0$.

It is worthwhile comparing the collection of level lines of $\Gamma$ with those of $\Lambda$, as some aspects can appear confusing at first sight:  

On the one hand, Theorem \ref {mainthm} implies that:

- The family ${\mathcal A}$ of all boundary-touching level arcs at levels in $2 \lambda \Z$ (here and in the sequel, this means that we look at the union of these level arcs, but that we do not record the actual value of their height in $2\lambda \Z$) for $\Lambda$ and for $\Gamma$ do coincide. Note  however that the height of such an arc for $\Gamma$ 
is necessarily $0$ for $\Gamma$ but can be non-zero for $\Lambda$. 

- The collection of all other (i.e., non-boundary touching) level lines with height in $2 \lambda \Z$ (both for $\Gamma$ and $\Lambda$) which correspond to the nested CLE$_4$ loops for the  $\Gamma_j$'s are therefore the same. 
Hence, we can say that the whole collections of level arcs and loops with height in $2 \lambda \Z$ do coincide for $\Gamma$ and $\Lambda$ (here again, we look at the union of these level-lines but do not record their actual height). 
We can recall (see \cite {ASW} and the references therein) that one can reconstruct deterministically the Dirichlet GFF from its collection of nested ALEs together with the knowledge of the signs of the corresponding height-gaps (there is one random coin-toss per individual ALE in the nested ALE). 
{ Since $\Lambda$ restricted to each cell $O_j$ is just $\Gamma_j + \lambda\eta_j$, one can iterate the procedure for each 
Dirichlet GFF $\Gamma_j$ and eventually, just as for the Dirichlet GFF $\Gamma$,}
reconstruct $\Lambda$  via its collection of nested level lines (and the data about the random signs of the height-gaps).

- The coupling also shows that each level loop of $\Gamma$ that does not intersect the ALE will correspond to a level loop of $\Lambda$ that does not intersect the ALE, and vice-versa. The difference between the height of such loops for $\Lambda$ and for $\Gamma$ will be in $4\lambda \Z$.

It is worthwhile to note that for the level lines that do intersect the ALE, the story is different. 
Indeed, the way in which they bounce on the ALE arcs will depend on the   
sign of the height-gaps on those ALE arcs as illustrated on Figure \ref {RR} (this is very much related to how flow lines for the GFF interact, as discussed for instance in \cite {MS1}). 
As we shall explain in Section \ref {Sshift}, this will imply for instance that there exist level-arcs at level in $\lambda + 2 \lambda \Z$ for $\Lambda$ that do hit the boundary of the domain (see again Figure \ref {RR}), while this is known not to be the case for $\Gamma$. 

\begin{figure}[ht!]
\includegraphics[scale=0.7]{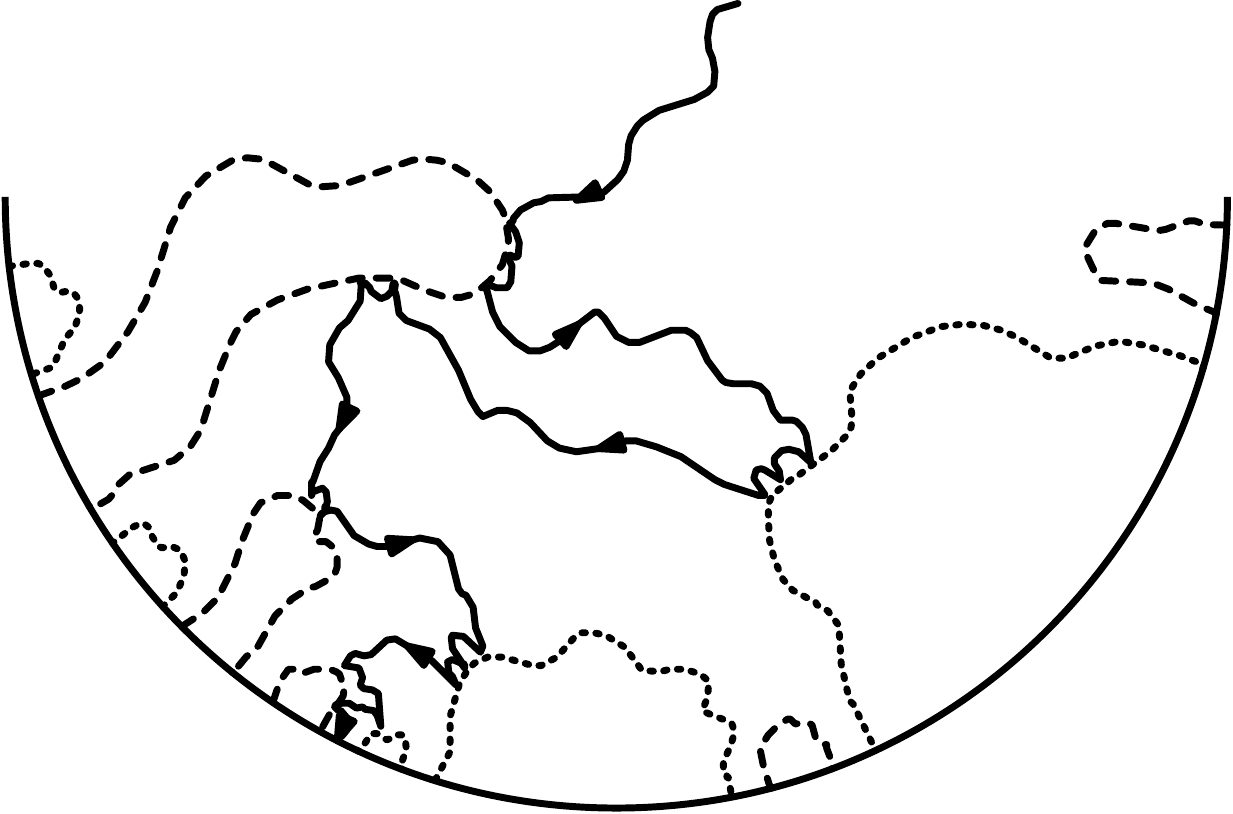}
  \caption {The $-\lambda$ level line for $\Lambda$ turns left on the ALE arcs with positive height-gap (in dotted) and right on the ALE arcs with negative height-gap (in dashed), and ends up hitting the boundary.}
  \label{RR}\end{figure}

In the same spirit, it is also easy to see that for the Dirichlet GFF $\Gamma$, the collections of boundary-touching ALE with height in $2 \lambda \Z$ (i.e. the collection of boundary-touching level lines with height $0$) have a different distribution than the collection of boundary touching level lines with height in $a + 2 \lambda \Z$ for a given $a$ in $(0, 2\lambda)$ (this follows for instance from the fact that the Hausdorff dimension of the intersection 
of the boundary of a cell defined by these level lines with the boundary of the domain is explicitely known and depends on $a$). 
On the other hand, the Neumann GFF $\Theta$ is defined ``up to an additive constant'' so that one can wonder whether, when one couples $\Theta$ with its particular realization $\Lambda$ as in Theorem \ref {mainthm}, the law of the 
collection of boundary-touching level lines of $\Lambda$ with height in $a + 2 \lambda \Z$ is independent of the value of $a$ or not. The answer will be given by the following result: 

\begin{proposition}
\label {propshift}
The law of the collection ${\mathcal A}_a$ of all the boundary-touching level lines of $\Lambda$ with height in $a + 2 \lambda \Z$ is identical to the law of ${\mathcal A}_0$ which is an ALE. Furthermore, conditionally on ${\mathcal A}_a$, the signs of the height-gaps on each of the arcs are again i.i.d., just as for ${\mathcal A}_0$ itself. 
\end{proposition}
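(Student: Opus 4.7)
The starting observation is that $\mathcal{A}_a(\Lambda) = \mathcal{A}_0(\Lambda - a)$: a level arc of $\Lambda$ at height $h$ is the same curve as a level arc of $\Lambda - a$ at height $h-a$, and $a + 2\lambda\Z$ shifts to $2\lambda\Z$ upon subtracting $a$. Since the Neumann GFF is defined only up to additive constants, $\Lambda - a$ is again a valid scalar-field realization of the same Neumann GFF $\Theta$, albeit with a different distribution as a scalar field. The proposition therefore reduces to the claim that the law of the boundary-touching level arcs at heights in $2\lambda\Z$ does not depend on which such representative of $\Theta$ one extracts them from.

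The plan is to identify, for an arbitrary scalar-field realization $\Lambda'$ of $\Theta$, the collection of boundary-touching level arcs of $\Lambda'$ at heights in $2\lambda\Z$ via an SLE$_4(-1,-1)$-type branching exploration, and to show that the law of this exploration depends on $\Lambda'$ only through $\Theta$ (that is, only through the gradient of $\Lambda'$). In the Dirichlet setting, the ALE $A_{-\lambda,\lambda}$ is defined precisely by such a branching SLE$_4(-1,-1)$ exploration (see \cite{SchSh,Dub,MS1} and the discussion in Section~\ref{S2}); the Neumann-boundary version of this characterization, in the spirit of \cite{IK}, should produce the collection $\mathcal{A}_0(\Lambda')$ from $\Lambda'$ via the same type of exploration. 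If the exploration is insensitive to the additive constant in this way, then applying it to $\Lambda' = \Lambda - a$ gives the same law as applying it to $\Lambda$ itself, and immediately yields that $\mathcal{A}_a(\Lambda) = \mathcal{A}_0(\Lambda - a)$ has the same law as $\mathcal{A}_0(\Lambda)$, which is an ALE. The i.i.d.\ property of the height-gap signs on $\mathcal{A}_a(\Lambda)$ is then built into the same construction, since the signs are assigned independently at the branching points of the SLE$_4(-1,-1)$ exploration.

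The main obstacle is precisely this invariance step: showing that the branching SLE$_4(-1,-1)$ exploration producing $\mathcal{A}_0(\Lambda')$ depends on $\Lambda'$ only through $\Theta$, and not on the additive constant used to represent it. On the Dirichlet side, the exploration starts at transition points of the piecewise-constant boundary values; on the Neumann side, the reflective boundary should make the additive constant invisible to the exploration, reducing the dependence on the representative to a global relabeling of heights that does not affect the geometry of the arcs. Making this rigorous is the delicate part of the argument and may proceed either via a direct SLE-martingale computation in the mixed Dirichlet-Neumann setting, along the lines of \cite{IK}, or via the reflected Brownian loop-soup construction developed in later sections of the paper, under which $\mathcal{A}_a$ should correspond to a geometric feature of the loop-soup that is manifestly independent of~$a$.
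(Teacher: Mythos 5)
Your reformulation $\mathcal{A}_a(\Lambda) = \mathcal{A}_0(\Lambda - a)$ is correct and matches the way the paper frames the problem, but the proposal then stops exactly at the point where the actual work begins. You write that ``if the exploration is insensitive to the additive constant'' the result follows, and you label ``making this rigorous'' as ``the delicate part,'' but you do not supply the argument. That step is the entire content of the proposition: it is not true that the branching exploration producing $\mathcal{A}_0(\Lambda')$ depends on $\Lambda'$ ``only through the gradient'' --- indeed the paper explains in Section~\ref{Sshift} that $\mathcal{A}_\lambda$ and $\mathcal{A}_0$ are genuinely different collections of arcs (the former contains boundary-to-boundary arcs at height $\lambda$, the latter does not), so the realization-dependence is real and only the \emph{law} is invariant. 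The gesture toward ``an SLE-martingale computation in the spirit of \cite{IK}'' or ``a loop-soup feature manifestly independent of $a$'' names two plausible directions but does not carry either out, and the second requires justification because the obvious loop-soup observables (e.g.\ clusters touching $I$) produce $\mathcal{A}_0$, not $\mathcal{A}_a$ for $a \neq 0$.

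What the paper actually does, and what is missing from the proposal, is a concrete reduction of the $a$-dependence. The paper takes the approximating fields $\hat\Gamma^b_u$ with boundary condition $b = -\lambda + a$ on a shrinking arc $\partial_1^u$, and proves a specific technical statement (Lemma~\ref{smalllemma}): there is a random separating arc $\hat\gamma$, defined as the outer boundary of the union of boundary-touching clusters in a reflected loop-soup plus a Poisson process of reflected excursions away from $\partial_1^u$ (via the cable-graph/Dynkin isomorphism coupling), such that conditionally on $\hat\gamma$, the field in the far component is again a mixed Neumann-Dirichlet GFF with the \emph{standard} $-\lambda$ value on $\hat\gamma$. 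As $u \to 0$, $\hat\gamma$ shrinks to the boundary point, so the limiting law of the level arcs is the same as for $b = -\lambda$, i.e.\ an ALE. This separating-arc lemma is the ingredient your proposal is implicitly assuming; without it or an equivalent substitute (such as the $\check\gamma'$ built from SLE$_4(\rho_1,\rho_2)$ excursions that the paper sketches as an alternative), the argument does not close.

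Also note a small point you handle informally: the i.i.d.\ property of the height-gap signs on $\mathcal{A}_a$ does not come for free from ``the signs are assigned at branching points'' --- one needs to know that the conditional field above $\hat\gamma$ is the \emph{same} mixed GFF as for $a=0$, which is again Lemma~\ref{smalllemma}. So both halves of the proposition rest on the same missing lemma.
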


Informally, this means that if one is given a GFF $\Theta$ with Neumann boundary conditions, then one can choose a ``reference height'' uniformly in $[0, 2 \lambda)$ in order to determine the realization of the corresponding ALE. In other words, the ALE is a deterministic function of the Dirichlet GFF, but in the case of the Neumann GFF, there is a one-parameter family of possible ALEs associated to it. A more precise related statement that follows 
from this proposition goes as follows: Suppose that
$\Theta$ is a Neumann GFF and that $\bar \Lambda$ is a realization of it. Then, if one chooses an independent uniform random variable $\zeta$ in $[0, 2\lambda]$, the field $\bar \Lambda + \zeta$ is another realization of $\Theta$ such that the 
law of the coupling of  $\Theta$ with the boundary-touching level arcs of $\bar \Lambda + \zeta$ at height in $2 \lambda \Z$ and the corresponding height gaps, is the same as the one given by the coupling of the Neumann GFF with an ALE and i.i.d. gaps as given by Theorem \ref {mainthm}. 

This is similar to the question of defining the nested CLE$_4$ as a function of the GFF in the Riemann sphere (see \cite {KW}) which is also defined up to an additive constant. 

\medbreak

These boundary touching arcs for the Neumann GFF play a similar role as the CLE$_4$ loops play for the Dirichlet GFF (or for the Neumann GFF for the inside loops). Indeed, if one views the Neumann GFF as defined on the upper-half-plane and symmetrizes the picture with respect to the real axis (one considers the union 
of the figure in the upper half-plane with its symmetric image) which is a very natural thing to do for such Neumann boundary conditions, then each arc of the ALE will hook-up with 
its symmetric image and create a loop. The story is then just as for CLE$_4$: One has a collection of disjoint simple and nested loops, and each loop 
tosses an independent fair coin in order to decide whether it is an upward or a downward $\pm 2 \lambda$  jump of the GFF. The union of the ALE and of the inside CLE$_4$ loops that one can define in the complement of the ALE are then a way to describe (and reconstruct, when one adds all the randomly chosen labels) the Neumann GFF. This is consistent with the informal description of the Neumann GFF in the 
upper half-plane to be 
the GFF in the entire plane, conditioned to be symmetric with respect to the real line, and restricted to the upper half-plane.
See Figure \ref {pic10} in the analogous case of mixed boundary conditions.  
The dimension $3/4$ of the intersection of an SLE$_4 (-1)$ path with the real line can then be interpreted as the dimension of the part of the real line that belongs to a Neumann CLE$_4$ carpet.

\section {Proof of the coupling}
\label {S4}
The main goal of this section is to prove Theorem \ref {mainthm} (Proposition \ref {propshift} will be derived in Section \ref {Sshift}). 

Let us first briefly give the definition of the GFF with mixed Dirichlet-Neumann boundary conditions that will play an important part in the rest of this paper. 
The Dirichlet GFF is associated with Brownian motion that is killed when it reaches the boundary of $D$. If one divides the boundary of $D$ into two parts, one 
part $\partial_{\mathcal D}$ where the 
Brownian motion is killed, and one part $\partial_{\mathcal N}$ where it is reflected, then it is easy to define (for instance by conformal invariance) the law of Brownian motion reflected on the latter 
part of the boundary and killed when it reaches the former, and the corresponding Dirichlet-Neumann Green's function. One can for instance first define 
this function in some well-chosen reference domain, and then generalize it by conformal invariance. 
Here, it is convenient to consider the positive quadrant ${\mathcal Q} = (0, \infty)^2$, and to define the 
mixed Dirichet-Neumann Green's function with Dirichlet conditions on $\R_+$ and Neumann 
conditions on $i\R_+$ by 
$$ G_{\mathcal Q}^{\mathcal {ND}} (z, z') = G_\HH (z, z') + G_\HH (- \bar z,  {z'}).$$
Then, one can define the Neumann-Dirichlet GFF in ${\mathcal Q}$ in exactly the same way as the Dirichlet GFF using this Green's function instead. 

When the boundary of $D$ is smooth and $H$ is a bounded harmonic function in $D$ such that the normal derivative on $\partial_{\mathcal N}$ vanishes, we define 
the GFF in $D$ with boundary conditions equal to those of $H$ on $\partial_{\mathcal D}$ and Neumann boundary conditions on $\partial_{\mathcal N}$ (we will refer to those conditions as mixed boundary conditions) to be the sum 
of the previous Neumann-Dirichlet GFF with $H$. Again, one can then extent this definition to domains with non-smooth boundaries by conformal invariance. 

\medbreak

We are now ready to describe the following warm-up to the proof of Theorem \ref {mainthm} : 

\begin {itemize}
 \item Let us choose $w<o$ on the real line.  When one considers a GFF $\Gamma$ in the upper half-plane with boundary conditions $-\lambda$ on $(-\infty, w)$, $\lambda$ on $(w, o)$ and $0$ on $(o, \infty)$, then one can trace the 
 $0$-level line $\gamma$ that emanates from $w$ (with $-\lambda$ boundary conditions on its left, and $\lambda$ boundary conditions on its right) as illustrated on Figure \ref {pic4}. Given that when one looks at the GFF away from $(o, \infty)$, it will be absolutely continuous with respect to the GFF with boundary conditions $-\lambda$ on $(-\infty, w)$ and $+ \lambda$ on $(w,\infty)$, the existence and  uniqueness of $\gamma$ until the time $\tau$ at which it hits $(o, \infty)$ can be viewed as a consequence of the corresponding known fact in the latter boundary conditions. 
 In fact, the conformal Markov property with one additional marked point, and the characterization of SLE$_4 (\rho)$ processes as the only continuous processes that satisfies this property
 imply immediately that the law of $\gamma$ up to $\tau$ is that of an SLE$_4 (\rho)$ for some value of $\rho$. The actual value of $\rho$ is necessarily equal to $-1$, because in the previous setup, $\infty$ and $o$ play somehow symmetric roles: $\gamma$ is an SLE$_4 (\rho)$ from $w$ to $o$ with marked point at infinity for the same value of $\rho$. This implies by the standard SLE coordinate change formulas (see for instance \cite {SchrammWilson}) that $\kappa -6 -\rho = \rho$ with 
 $\kappa =4$, i.e., $\rho = -1$. Of course, the previous lines are not the shortest derivation of the known fact that $\eta$ is an SLE$_4 (-1)$ but we highlighted this argument in order 
 to explain why the same result will hold in the next item. 
 \begin{figure}[ht!]
\includegraphics[scale=0.6]{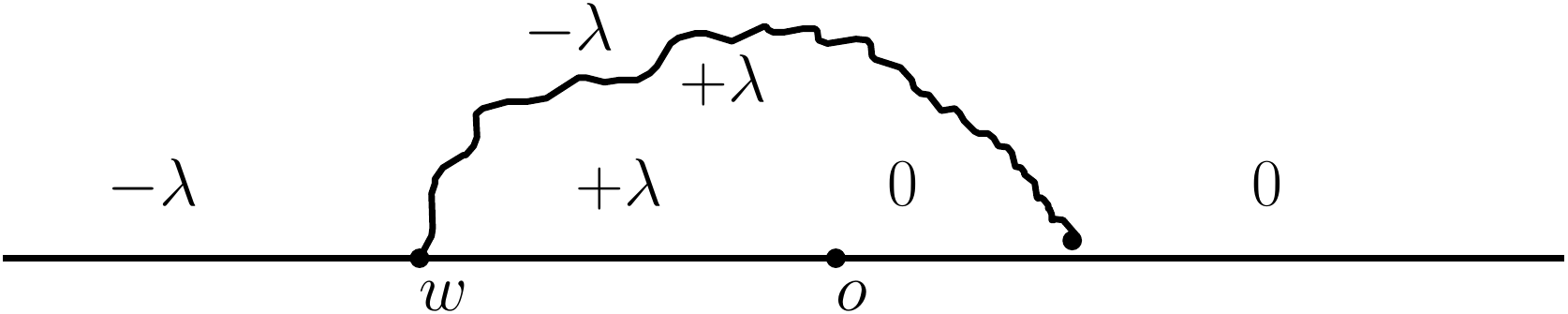}
  \caption {The SLE$_4 (-1)$ corresponding to the $0$-level line of the GFF with these boundary conditions}
  \label{pic4}\end{figure}

 \item Let us now consider instead a GFF $\tilde \Gamma$ in the upper half-plane, with boundary conditions $-\lambda$ on $(-\infty, w)$, $\lambda$ on $(w, o)$, but this time, on $(o,\infty)$, one takes Neumann boundary conditions. For the same absolute continuity reasons, one can define the $0$-level line $\tilde \gamma$ that emanates from $w$ (see Figure \ref {pic5}) up to the first time $\tilde \tau$ at which it will hit $(o, \infty)$, and until that time, it will be an SLE$_4 (\rho)$ curve from $w$ to $\infty$ with marked point at $o$
 using the same argument based on the conformal Markov property. Again, $\infty$ and $o$ play symmetric roles, so that it should also be an SLE$_4 (\rho)$ curve from $w$ to $o$ with marked point at $\infty$ for the same value of $\rho$, which shows that $\tilde \gamma$ is also an SLE$_4 (-1)$ curve
 (this is also not new, see for instance Izyurov-Kyt\"ol\"a \cite {IK} -- we wrote out this little argument to highlight the basic reason for which the same SLE$_4 (-1)$ path 
 appears in our two settings).
 
 \begin{figure}[ht!]
\includegraphics[scale=0.6]{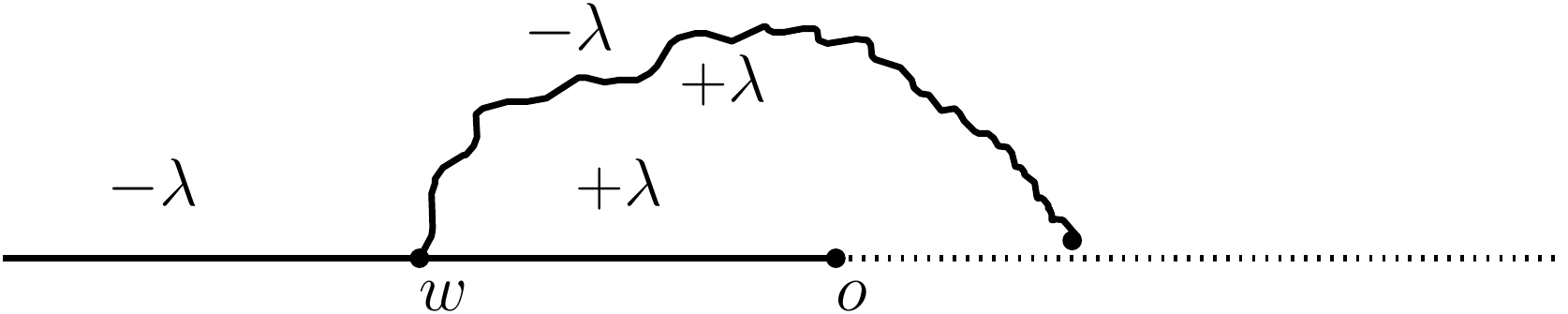}
  \caption {The same SLE$_4 (-1)$, this time corresponding to the $0$-level line of a GFF with these new mixed boundary conditions (Neumann instead of $0$ on $(o, \infty)$).}
  \label{pic5}\end{figure}

 \end {itemize}
 
Here, we can already couple these GFFs $\Gamma$ and $\tilde \Gamma$ with these different boundary conditions in such a way that $\gamma$ and $\tilde \gamma$ coincide up to 
their hitting time of $(o, \infty)$. The main question is now to see what one can do beyond the time $\tau$. 
Note first that in the case of $\Gamma$, it is possible to continue the branching SLE$_4 (-1)$ as $0$-level lines and to trace all the arcs of the ALE associated to $\Gamma$ via the SLE$_4 (-1)$ branching tree \`a la \cite {Sh}. The question is what to do in the case of $\tilde \Gamma$, when one has mixed boundary conditions.

By symmetry, it will suffice to describe how to continue to trace $\tilde \gamma$ targeting infinity (the other branches will be defined similarly). At time $\tilde \tau$, the 
boundary conditions on the unbounded connected component of the complement of $\tilde \gamma$ are $-\lambda$ on the left of the tip of the curve, and Neumann on $(\tilde \gamma (\tilde \tau), \infty)$. By conformal invariance, this means that we are looking at the case of a GFF in the upper half-plane with boundary conditions 
$-\lambda$ on $\R_-$ and Neumann on $\R_+$ (and that we want to couple it with a GFF in the upper half-plane with boundary conditions $-\lambda$ on $\R_-$ and $0$ on $\R_+$). 
Note first that at every point, the expected value of this GFF (with mixed boundary conditions) is  $-\lambda$. 

Here, the new input will be to notice that it is possible to directly adapt what has been done for instance for the definition of the CLE$_4$-GFF coupling out of the side-swapping SLE$_4 (-2)$ branching tree (see for instance \cite {Sh,WW,ASW,Wgff})
to the present setting:
If we were to insert a small interval $(0, \eps)$ with boundary height $+ \lambda$, or with $-3 \lambda$  between the $-\lambda$ and the Neumann boundary parts,
then our previous analysis shows how to continue the interface for a little while, until it hits the Neumann boundary arc. Furthermore, if one tosses a fair coin to decide whether one inserts $+ \lambda$ or $-3 \lambda$, one will preserve the fact that the mean value is $-\lambda$ i.e., the martingale property of the height-function, and in fact the fluctuations of the height will be 
symmetric in both cases. Iterating this procedure and letting $\eps \to 0$ would provide the desired coupling.

 \begin{figure}[ht!]
\includegraphics[scale=0.6]{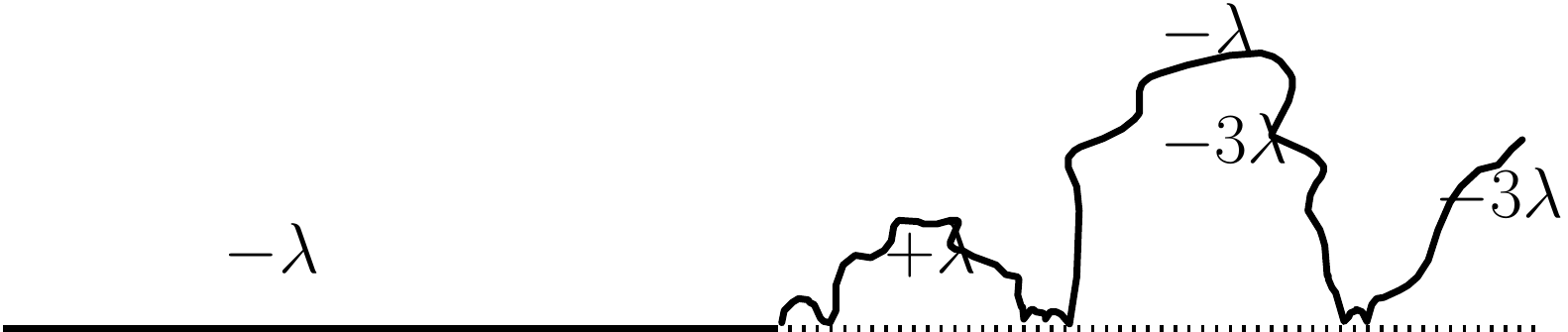}
  \caption {The SLE$_4 (-1)$ with randomly chosen heights at each excursion.}
  \label{pic6}\end{figure}

After this warm-up, let us turn to the actual proof.  
Our first goal will now be to couple a GFF with boundary conditions $-\lambda$ on $\R_-$ and $0$ on $\R_+$ with a GFF with 
 boundary conditions $-\lambda$ on $\R_-$ and Neumann on $\R_+$. 
Based on the previous considerations,  the strategy of the coupling (illustrated in Figure \ref {pic6}) now goes as follows: 
\begin {itemize}
\item Sample an SLE$_4 (-1)$  from $0$ to $\infty$ that we denote by $\eta$. This is known to be a simple continuous curve from $0$ to infinity, that touches $\R_+$ on a fractal set of points, and does not intersect $\R_-$. It can be therefore viewed as the concatenation of disjoint excursions away from the positive real line. 
\item Inside each of the bounded connected component of $\HH \setminus \eta$ (each excursion of $\eta$ corresponds to one such connected component),
toss an independent fair coin in order to decide whether to choose $+ \lambda$ or $-3 \lambda$ boundary conditions on the part of the boundary of that component traced by the corresponding excursion of $\eta$, and keep Neumann boundary conditions on $\R_+$. On the unbounded connected component of 
the complement of $\eta$, choose uniform $-\lambda$ boundary conditions. This includes $\R_-$ and the ``left-side'' of the entire curve 
$\eta$ (see Figure \ref {pic6}). 
\item Then inside each of the connected components of the complement of $\eta$, sample an independent GFF with the above-mentioned boundary conditions i.e. the sum of a $0$-boundary GFF with 
the harmonic function $h$ that has these boundary conditions. 
\end {itemize} 
Our first step is to check that:
\begin {lemma}
\label {l3}
 The obtained field $\hat \Gamma$ is indeed a GFF in $\HH$ with boundary conditions given by $-\lambda$ on $\R_-$ and Neumann on $\R_+$. 
\end {lemma}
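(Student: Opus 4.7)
The plan is to adapt the Schramm--Sheffield/Miller--Sheffield coupling strategy to the present mixed Dirichlet-Neumann setting, via the warm-up above: I would construct the target GFF $\tilde\Gamma$ (with $-\lambda$ on $\R_-$ and Neumann on $\R_+$) by iterated infinitesimal height insertions. Fix a small $\eps>0$, insert at the tip an independent fair-coin Dirichlet arc of height $+\lambda$ or $-3\lambda$ on $(0,\eps)$, and trace the corresponding level line from $0$; by the warm-up (respectively its reflected version), this line is in both cases an SLE$_4(-1)$ until the first time $\tau$ at which it reconnects with the remaining Neumann boundary. Two essential facts: the law of this SLE$_4(-1)$ arc does not depend on the coin (so the curve and the coin are independent), and the coin flip has mean $(+\lambda-3\lambda)/2 = -\lambda$ (so its average matches the mean of $\tilde\Gamma$ on the Neumann side).

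After the first pass, the unbounded component of $\HH$ minus the traced arc has mixed Dirichlet-Neumann boundary of the same form as the original (with $-\lambda$ on the enlarged Dirichlet part and Neumann on the rest of the real line), so the procedure iterates; the bounded component has precisely the mixed boundary data ($+\lambda$ or $-3\lambda$ on the traced arc, Neumann on the $\R_+$-part) prescribed by the construction, and the restriction of $\tilde\Gamma$ there is an independent mixed GFF by the Markov property of the GFF. Concatenating the SLE$_4(-1)$ pieces across all iterations produces, by the strong Markov property of SLE$_4(-1)$ in slit domains, a single SLE$_4(-1)$ curve $\eta$ from $0$ to $\infty$ whose excursions from $\R_+$ are exactly the boundaries of the bounded components, each coming equipped with an independent fair coin for its height. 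Conditionally on $\eta$ and these coins, the residual GFFs in each component are independent mixed GFFs with the prescribed boundary data, which is precisely the construction of $\hat\Gamma$.

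The main obstacle is the $\eps\to 0$ limit: I must verify that the iterated insertion procedure genuinely converges to the above picture --- in particular that the union of bounded components exhausts all excursions of $\eta$ from $\R_+$ --- and check, via a Schramm--Sheffield martingale computation along the Loewner driving function, that the accumulated $L^2$ contributions of the countably many coin flips sum to exactly the Neumann reflection term $G_\HH(-\bar z, z')$ that turns the Dirichlet Green's function $G_\HH$ into the mixed $G_\HH^{\mathcal{ND}}$. Both steps parallel the standard analysis of the CLE$_4$-GFF coupling in the Dirichlet setting (\cite{Sh,MS,ASW,Wgff}) and should extend with only minor modifications.
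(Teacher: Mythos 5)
Your proposal follows the $\eps$-insertion heuristic that the paper explicitly presents as a ``warm-up'' but then deliberately departs from. The paper's actual proof is more direct: it does not run any $\eps\to 0$ limit at all. Instead it defines, for a fixed test function $f$, the process
$M_t:=\int f(z)h_t(z)\,dz$, where $h_t$ is the harmonic function in the complement of $\eta[0,t]$ with the mixed boundary data and the already-realized coin-tosses, and verifies directly that $M_t$ is a continuous bounded martingale whose quadratic variation compensates the decrease of the Neumann--Dirichlet Green's function restricted to $\HH\setminus\eta[0,t]$. Two points carry the proof there and are not visible in your sketch: (i) the martingale property is preserved \emph{across} the coin-toss times precisely because the height-jump distribution is symmetric about $-\lambda$, giving $M_\tau = E[M_{\tau+s}\mid\mathcal F_\tau]$ even when new excursions are initiated in $[\tau,\tau+s]$; (ii) the measure $d\langle M\rangle_t$ puts zero mass on the (uncountable) set of times at which $\eta$ touches $\R_+$, which is what lets one identify $\langle M\rangle_\infty = \int f(x)f(y)(G_\HH^{\mathcal{ND}}-G_{\HH\setminus\eta}^{\mathcal{ND}})\,dxdy$; this is argued through the fact that only finitely many excursions of $\eta$ reach the support of $f$.

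Your plan, by contrast, has a genuine unresolved gap exactly where you flag it. Making the $\eps$-insertion scheme rigorous requires: showing that the traced arcs stabilize to a single SLE$_4(-1)$ as $\eps\to 0$ (your ``strong Markov property of SLE$_4(-1)$ in slit domains'' claim is far from automatic, since each pass starts afresh from a new tip on $\R_+$ and you need to rule out the possibility that the concatenation fails to exhaust the excursion structure or that small excursions are created or destroyed in the limit); controlling the $L^2$ error of the finitely-inserted approximations uniformly; and verifying that in the limit the coin-toss heights attach independently to each excursion rather than being smeared across nearby excursions. None of these are addressed, and ``should extend with only minor modifications'' is not a proof. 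The direct martingale argument of the paper sidesteps all of them: once the candidate field $\hat\Gamma$ is built in one shot from $\eta$ and its coins, one only has to compute moments of $\hat\Gamma(f)$, and continuity of $\eta$ plus the symmetric-coin observation make that computation self-contained. If you want to salvage the $\eps\to 0$ route, you would at minimum need a rigorous convergence statement for the iterated SLE$_4(-1)$ arcs and an argument that the variance of the accumulated height process converges to the Green's function difference; at that point you would essentially be reproving the paper's martingale lemma anyway, just wrapped in an extra unnecessary limit.
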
 
\begin {proof}
The proof of this lemma goes along the same lines as for the radial SLE$_4 (-2)$ coupling with the GFF as for instance outlined in \cite {ASW}.
First, one notices that by properties of Gaussian processes, it is sufficient 
to show that the law of $\hat \Gamma (f)$ for every individual continuous function $f$ with compact support in the upper half-plane 
is Gaussian with the right mean and variance.

So, we fix such a function $f$ with compact support in $\HH$. 
Then, for each excursion of $\eta$ away from the real line, we toss an independent $\pm 1$ coin, and for every time $t$, we define the harmonic function $h_t$ in the complement of $\eta [0,t]$ via the following boundary conditions: 
Neumann boundary conditions on $\R_+$, constant boundary conditions equal to $-\lambda$ on $\R_-$ as well as on the left-hand side of $\eta [0,t]$,
and  constant boundary conditions 
$\lambda$ or $-3 \lambda$ on the right-hand side of $\eta[0,t]$, depending on the sign of the coin-tossing of the corresponding excursion of $\eta$.
We denote by ${\mathcal F}_t$ the $\sigma$-field generated by $\eta [0,t]$ and the coin-tosses that occured before time $t$ (i.e., corresponding to all excursions of $\eta$ that started before time $t$). 

If we are given a point $z$ in the upper half-plane, we see that $t \mapsto h_t (z)$ will evolve like a continuous local martingale during the excursion intervals of $\eta$. This is exactly 
a consequence of the previous observation about SLE$_4 (-1)$ processes coupled with mixed boundary conditions. Note also that by definition, 
$h_t( z)$ remains in the interval $[-3 \lambda, \lambda]$ and converges as $t \to \infty$ to $h(z)$. Let us define 
$$ M_t := \int f(z) h_t(z) dz .$$
By dominated convergence, it converges almost surely and in $L^1$ to $\int f(z) h(z) dz$, and furthermore, for every given $t > 0$, 
if $\tau_t$ denotes the first time after $t$ at which $\eta$ hits the real line, then $s \mapsto M_{\min(t+s, \tau_t)}$ will be a continuous bounded martingale. 
Furthermore, the quadratic variation of this martingale on this interval
is exactly compensating the decrease of the Green's functions in the complement of $\eta$ (this is exactly the integrated version of
Proposition 4 of Izyurov-Kyt\"ol\"a \cite {IK}, see 
also Sheffield \cite {SheffieldQZ} for the Dirichlet version): 
$$ d\langle M \rangle_t = - d \int f(x) f(y) G_{\HH \setminus \eta [0,t]}^{\mathcal {ND}} (x,y) dx dy$$
where $G^{\mathcal {ND}}$ denotes the Green's function with mixed Dirichlet-Neumann boundary conditions (Neumann on $\R_-$ and Dirichlet elsewhere).
The first additional item to check is that the martingale property is preserved after $\tau_t$, which follows directly from the coin tossing procedure (indeed, if $\tau_t < t+s$, then the conditional expectation of $M_{t+s}$ given ${\mathcal F_{\tau_t}}$ will be exactly $M_{\tau_t}$ -- to see this we can first discover $\eta[\tau_t, t +s]$ and then note that the contributions due to the coin tosses that occur during $[\tau_t, t +s]$ are symmetric and cancel out). In other words, for all positive $s$, we have that 
$ M_t = E [ M_{t+s} | {\mathcal F}_t ]$, and letting $s \to \infty$, $M_t = E [ M_\infty | {\mathcal F}_t]$. 

The second additional item to check is the $t \mapsto M_t$ is in fact continuous on all of $\R_+$. This follows directly from the definition of $M_t$ and the fact that $t \mapsto \eta_t$ is known 
to be a continuous curve. Hence, we now know that $t \mapsto M_t$ is a continuous bounded martingale, i.e., a Brownian motion time-changed by its quadratic variation. 

In order to conclude, we just need to justify the fact that 
$$ \langle M \rangle_\infty = \int f(x) f(y) ( G_{\HH}^{\mathcal {ND}} (x,y) - G_{\HH \setminus \eta}^{\mathcal {ND}} (x,y) )  dx dy.$$ 
Indeed, if this holds, then we can conclude just as in \cite {SheffieldQZ}. 
So, what remains to be checked is that the measure $d\langle M \rangle_t$ puts zero mass on the closed set of times at which $\eta$ hits the boundary. 
Here, we can use the fact that the definition of $M$ shows for any stopping times $\sigma < \sigma'$ such that $\eta_\sigma$ and $\eta_{\sigma'}$ 
lie on the real line, one has $M_{\sigma} = M_{\sigma'}$ unless $\eta$ made an excursion into the support of $f$ between $\sigma$ and $\sigma'$, and we know (because 
$\eta$ is continuous and the compact support of $f$ is at positive distance from the boundary) that almost surely, only finitely many excursions of $\eta$ do make it into the support of $f$. 
\end {proof}

The second step in the proof of the theorem is to notice that once all of $\eta$ has been traced, it is possible to iterate this coupling inside each of the bounded connected components 
of the complement of $\eta$ (indeed, the boundary conditions there are again constant on one arc, and Neumann on the real segment). More precisely, we define 
the random function $h_n (z)$  obtained after $n$ iterations of ``layers'' of SLE$_4 (-1)$ processes with the random height-gaps. The function $h_n / \lambda$ 
takes its values in $\{ -2n-1, -2n+1, \ldots , 2n-1 \}$.  Iterating the previous constructions shows that adding $h_n$ to the GFF with Dirichlet boundary and Neumann boundary values 
in the complement of these $n$ SLE$_4 (-1)$ layers constructs indeed a mixed-GFF $\Gamma^n$ in $\HH$ 
with boundary conditions given by $-\lambda$ on $\R_-$ and Neumann on $\R_+$. 

Let us now again fix a smooth function $f$ with compact support away from the real line. Then, it is easy to see that almost surely, there will be a value $n_0$ such that 
the $n_0$-th SLE$_4 (-1)$ layer will be under the support of $f$, so that for all $n \ge n_0$, $\Gamma^n (f) = \Gamma^{n_0} (f)$.
Hence (given that if a sequence of Gaussian random variables converges almost surely, it converges also in law to a Gaussian random variable),
we conclude readily that the limit $\Gamma^\infty (f)$ defines also mixed GFF in  $\HH$ 
with boundary conditions given by $-\lambda$ on $\R_-$ and Neumann on $\R_+$. 

In other words,  it is possible to first sample the entire infinite branching tree of SLE$_4 (-1)$ processes, together with their labels, and then 
to sample conditionally independent GFFs in each of the connected components of its complement (mind that all these GFFs have constant boundary conditions, and that the 
constant is in $\lambda + 2 \lambda \Z$), and that one gets exactly a GFF in $\HH$ with boundary values $-\lambda$ on $\R_-$ and Neumann on $\R_+$.

A third step is to notice that this SLE$_4 (-1)$ exploration tree is exactly the same as the one that one would have drawn if one would have explored the collection
of all boundary touching $0$-level lines of a GFF $\tilde \Gamma$ with boundary conditions $-\lambda$ on $\R_-$ and $0$ on $\R_+$. In particular, this shows that: 

\begin {proposition} 
\label {propoND}
 One can couple a GFF $\tilde \Gamma$ (with boundary conditions $-\lambda$ on $\R_-$ and $0$ on $\R_+$) with a GFF $\hat \Gamma$ 
 (with boundary conditions $-\lambda$ on $\R_-$ and Neumann on $\R_+$) in such a way that the only difference between $\tilde \Gamma$ and $\hat \Gamma$ lies in the signs of the  height jumps on the arcs of their common underlying branching tree. 
\end {proposition}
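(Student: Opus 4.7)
The key observation is that the branching $\mathrm{SLE}_4(-1)$ tree used to construct $\hat\Gamma$ has exactly the same law as the branching tree of boundary-touching $0$-level lines of the Dirichlet-type GFF $\tilde\Gamma$. For the initial branch this is the content of the two items in the warm-up: in both boundary-condition settings the $0$-level line emanating from the transition point $0$ satisfies the one-marked-point conformal Markov property, so it must be an $\mathrm{SLE}_4(\rho)$, and the symmetry between the interior marked point and $\infty$ forces $\rho=-1$. I would then argue by induction on the branching steps: after the first $\mathrm{SLE}_4(-1)$ branch is drawn and a fair coin is tossed on each excursion, the boundary data in every bounded cell of the complement is again of the form ``$\pm\lambda$ on one arc, Neumann on the real segment'' (for $\hat\Gamma$) or ``$\pm\lambda$ on one arc, $0$ on the real segment'' (for $\tilde\Gamma$), so the warm-up argument reapplies verbatim to identify the next branch with an $\mathrm{SLE}_4(-1)$ inside the cell; for the unique unbounded cell, boundary conditions are constant (equal to $-\lambda$) and no further branching occurs in the relevant region.

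With this identification in hand, the coupling is then built by sampling a single realization $\mathcal T$ of this branching $\mathrm{SLE}_4(-1)$ tree, a single family of independent $0$-boundary GFFs $(\Gamma_j)$ in the cells $(O_j)$ of $\HH\setminus\mathcal T$, and a single family of coin-tosses $(\eps_j)\in\{\pm 1\}$ on the arcs of $\mathcal T$ (all three objects jointly independent). I would then set $\tilde\Gamma := \sum_j (\Gamma_j + c_j^{\mathcal D}\,1_{O_j})$ where $c_j^{\mathcal D}\in\{-\lambda,+\lambda\}$ is the deterministic alternating height imposed by the Dirichlet/ALE coupling recalled in Section~\ref{S2} (starting from $-\lambda$ against $\R_-$), and $\hat\Gamma := \sum_j (\Gamma_j + c_j^{\mathcal N}\,1_{O_j})$ where $c_j^{\mathcal N}\in\lambda+2\lambda\Z$ is the height obtained by walking from the unbounded cell (with height $-\lambda$) toward $O_j$ and adding $\pm 2\lambda$ according to the $\eps$'s on the arcs crossed. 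By Lemma~\ref{l3} and its iterative extension already established in the text, the resulting $\hat\Gamma$ has the stated Neumann-type law; by the Schramm–Sheffield/ALE decomposition of Section~\ref{S2}, the resulting $\tilde\Gamma$ has the stated Dirichlet-type law.

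Finally, the ``only difference lies in the signs of the height jumps'' statement is then essentially tautological: $\tilde\Gamma$ and $\hat\Gamma$ are built from the same $\mathcal T$ and the same cell-GFFs $(\Gamma_j)$, so on each arc $a$ of $\mathcal T$ separating cells $O_i$ and $O_j$, the height gap of $\tilde\Gamma$ is $c_j^{\mathcal D}-c_i^{\mathcal D}=\pm 2\lambda$ with a sign forced by the tree structure, while the height gap of $\hat\Gamma$ is $c_j^{\mathcal N}-c_i^{\mathcal N}=\pm 2\lambda$ with sign $\eps_a$, and the two fields differ only through these sign assignments. The step I expect to require the most care is not the assembly just described but the recursive identification of the two exploration trees after the first layer, i.e.\ verifying that the ``one-marked-point plus symmetry'' argument of the warm-up continues to force $\rho=-1$ at every subsequent branching; this is the non-formal input, and it parallels the treatment of the pure Dirichlet case given in \cite{ASW}.
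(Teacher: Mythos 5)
Your proposal follows essentially the same route as the paper: it builds $\hat\Gamma$ by iterating Lemma~\ref{l3} over the branching $\mathrm{SLE}_4(-1)$ exploration tree and then identifies this tree with the tree of boundary-touching $0$-level lines (the ALE) of $\tilde\Gamma$, so that the coupling is obtained by sharing the tree and the cell GFFs while assigning alternating versus i.i.d.\ random signs to the height gaps. The explicit shared-randomness construction you spell out (single $\mathcal T$, single $(\Gamma_j)$, single $(\eps_j)$) is precisely what the paper leaves implicit when it writes ``In particular, this shows that''; the mathematical content is the same.
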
 

Note that the previous ALE-type decomposition of $\hat \Gamma$ is in fact a deterministic function of $\hat \Gamma$; the traced arcs are exactly the 
collection of all level lines of $\hat \Gamma$ with height in $2 \lambda \Z$ that intersect $\R_+$.
%(this collection is by construction the same for $\hat \Gamma$ and for $\tilde \Gamma$, and for $\tilde \Gamma$, it is known that this collection is a deterministic function of $\tilde \Gamma$). 

\medbreak

Finally, to conclude the proof of Theorem \ref {mainthm}, 
we need to come back to the study of entirely Dirichlet vs. entirely Neumann boundary conditions.
Let us start with a Dirichlet GFF $\Gamma$, consider its boundary touching level arcs at height $0$, toss an independent fair coin for each of them, and define $\Lambda$ as in Theorem \ref {mainthm}. 
In order to prove that $\Lambda$ is a realization of a Neumann GFF, we use a limiting procedure:  
If we map the previous coupling of $\tilde \Gamma$ and $\hat \Gamma$ onto the unit disc, we see that, if we define $\partial_1^\epsil$ and $\partial_2^\epsil$ the two disjoint arcs on the unit circle separated by the points $\exp (i( \pi - \epsil))$  and $\exp (i (\pi + \epsil))$ for some very small $\epsil$, and one considers the GFFs $\hat \Gamma_\epsil$ and $\tilde \Gamma_\epsil$ with boundary conditions $-\lambda$ on the small arc $\partial_1^\epsil$  and on the long arc, $0$ boundary condition for $\tilde \Gamma_\epsil$ and Neumann condition for $\hat \Gamma_\epsil$, one can couple the 
two fields so that they share the same ALEs and the difference is described as above. 
Now, when one lets $\epsil \to 0$: 

- On the one hand, one can see that it is possible to couple all $\tilde \Gamma_\epsil$ with the GFF $\Gamma$ with Dirichlet boundary conditions, 
so that when one restricts it to any given set at positive distance from $-1$, then almost surely, $\tilde \Gamma_\epsil$ coincides exactly with $\Gamma$ for all small enough $\epsil$ 
(one way to see this is to use the coupling of $\Gamma_\epsil$ with a Brownian loop-soup + a Poisson point process of excursions away from the small arc, as for instance in \cite {Lupu2,QianW}). This shows in particular that all the ALE arcs of $\tilde \Gamma_\epsil$ that are at some positive distance from $-1$ will stabilize to those of $\Gamma$: More precisely, each level arc at height $0$ of $\Gamma$ will be almost surely a level arc of $\tilde \Gamma_\epsil$ for all sufficiently small $\epsil$. 

- If we couple each $\tilde \Gamma_\epsil$ with 
an $\hat \Gamma_\epsil$ as in Proposition \ref {propoND} and in such a way that the signs of the height-jumps on the boundary-touching level arcs of the latter coincide with those of $\Lambda$ on those arcs that are also boundary-touching level arcs of $\Gamma$, then the gradient of $\hat \Gamma_\epsil$ will stabilize as well. In fact, if we shift $\hat \Gamma_\epsil$ in such a way that the boundary conditions of the ALE cell that contains the origin are the same for $\hat \Gamma_\epsil$ as the value for $\tilde \Gamma_\epsil$ (this boundary value is  $\eps \lambda$ for $\eps = +1$ or $\eps = -1$), then this shifted field $\Lambda_\epsil$ will also almost surely stabilize to $\Lambda$.

- On the other hand, the law of $\hat \Gamma^\epsil$, when viewed as acting on functions with zero mean will converge to that of a Neumann GFF in the unit disk (this follows from the fact that 
the corresponding covariances converge), so that the limiting field $\Lambda$ is indeed a realization of the Neumann GFF. 

This concludes the proof of Theorem \ref {mainthm}. 

\medbreak
Let us summarize in a picturesque way the CLE-type description of the GFF with mixed  boundary conditions, in Figures \ref {pic9} and \ref{pic10}.  

 \begin{figure}[ht!]
\includegraphics[scale=0.55]{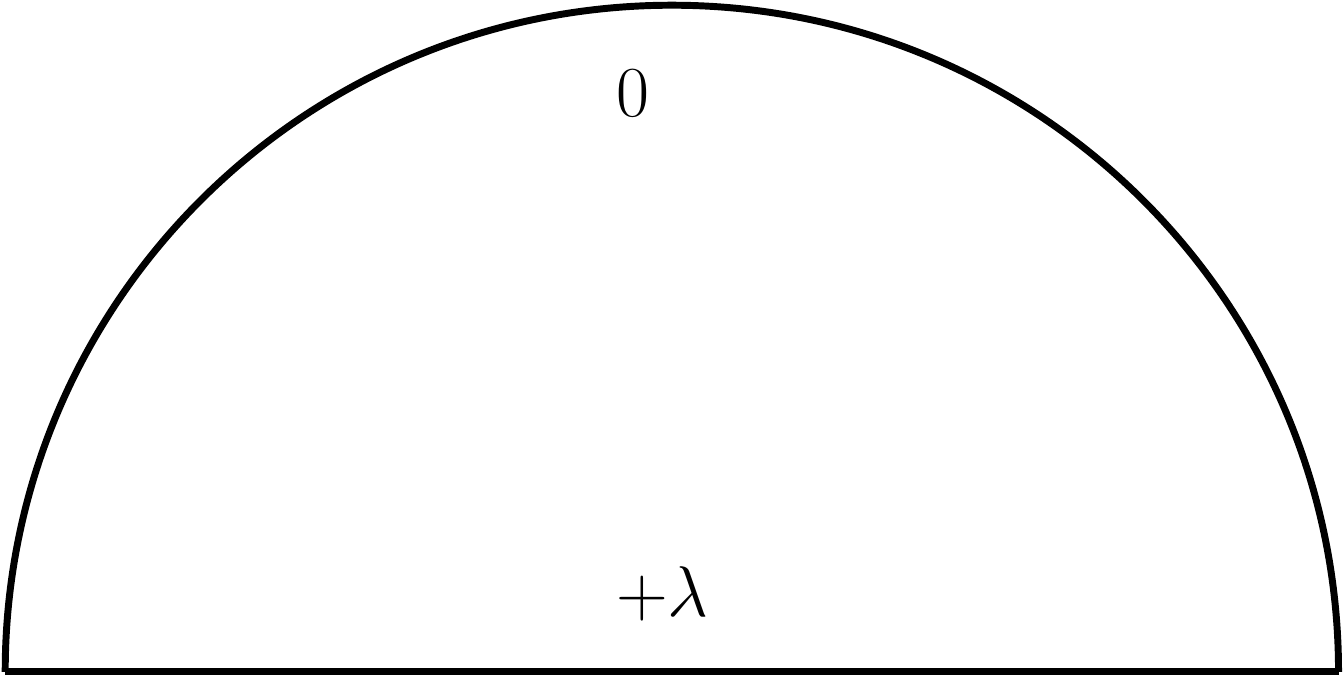}
\quad 
\includegraphics[scale=0.55]{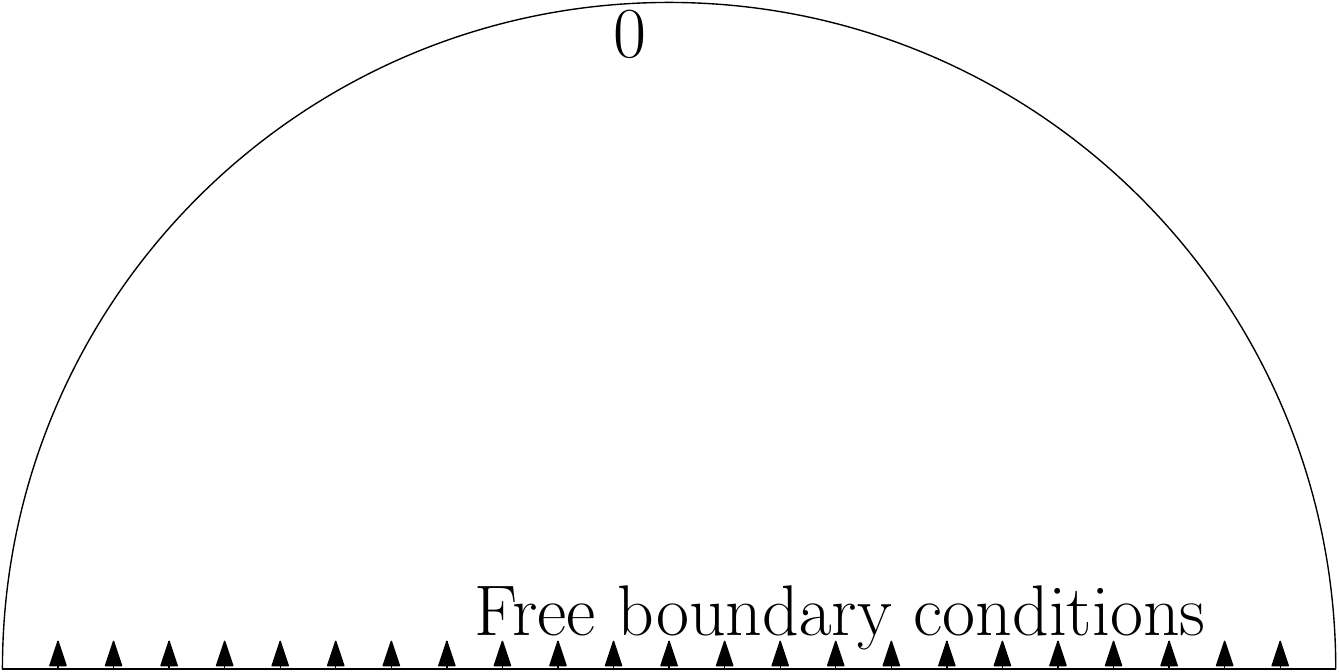}
\vskip 5mm
\includegraphics[scale=0.55]{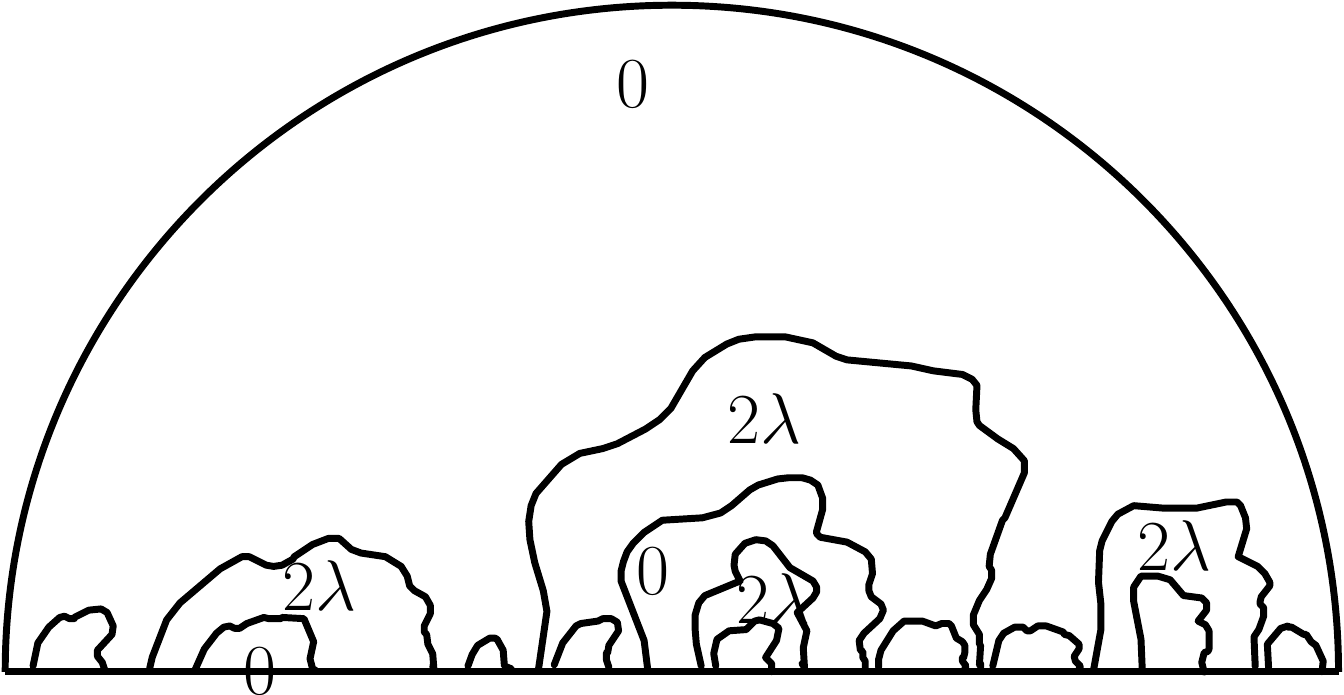}
\quad
\includegraphics[scale=0.55]{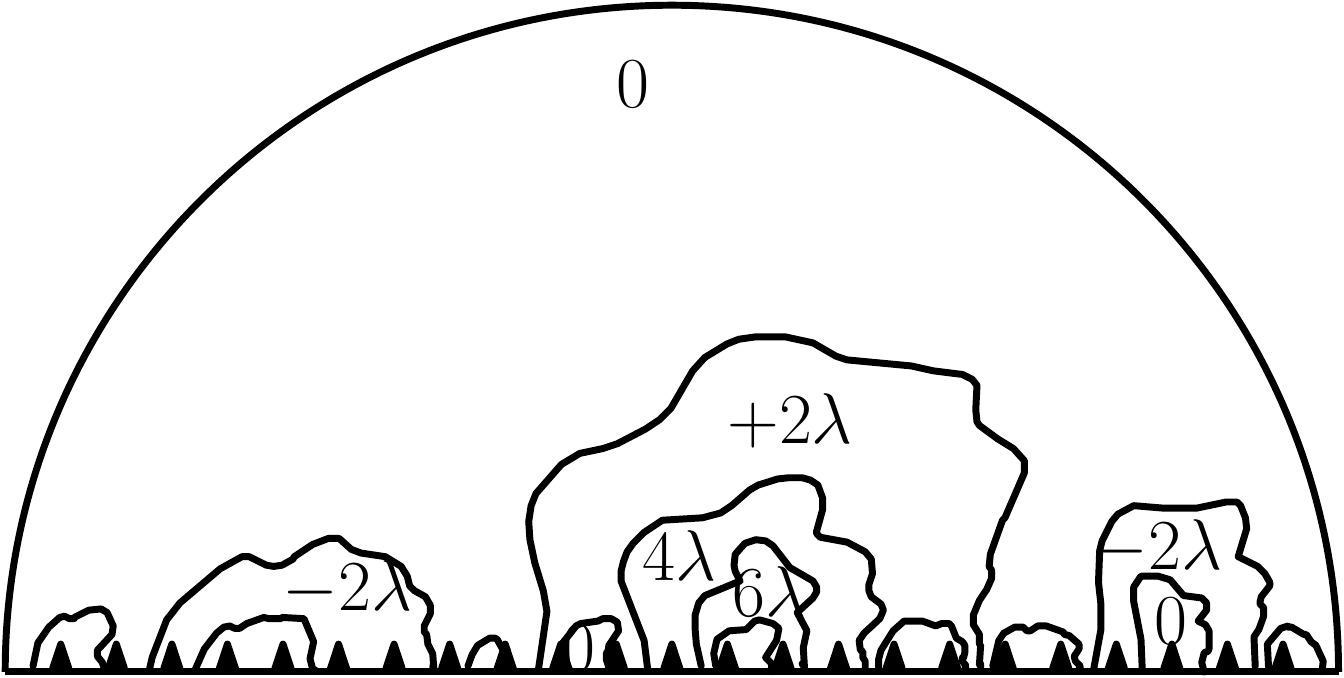}
  \caption {Top: The boundary conditions for $\tilde \Gamma + \lambda $ and $\hat \Gamma + \lambda $ in the semi-disc.
  Bottom: The same ALE with different heights. On the left, alternating $0$ and $2\lambda$ will create the $+\lambda$ boundary condition on the horizontal line. On the right, the random height-jumps will create the Neumann boundary condition on the horizontal line.}
  \label{pic9}\end{figure}

Let us make here one comment that will be useful later in the paper.  Consider the Neumann-Dirichlet ALE in the semi-disc as in Figure \ref {pic9}, with Neumann boundary conditions on the horizontal segment. 
Just as for CLE$_4$ in the unit disc, one can immediately relate the covariance of the field at two points $x$ and $y$ to the expectation of number $N(x,y)$
of loops or arcs that disconnect both these points from the semi-circle (indeed, the field in the neighborhoods of $x$ and $y$ will be $2 \lambda$ times the sums of the same $N(x,y)$ coin tosses 
plus zero-expectation terms that are conditionally independent for $x$ and $y$). More precisely (we state this as a lemma for future reference): 
\begin {lemma}
\label {arcs}
The Neumann-Dirichlet Green's function $G^{\mathcal {ND}}(x,y)$ in the semi-disk  (which is equal to the sum $G_{\U} (x,y) + G_{\U}
 (x, \overline y)$ where $G_{\U}$ is the Dirichlet Green's function in the unit disc) is equal to $(2 \lambda)^2$ times the expectation of $N(x,y)$. 
\end {lemma}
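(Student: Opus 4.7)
The plan is to iterate the nested ALE/CLE$_4$ decomposition for $\hat\Gamma$ in the semi-disc, combining the mixed-boundary ALE (as in Proposition \ref{propoND}) with the Miller--Sheffield nested CLE$_4$ decomposition of the Dirichlet GFFs produced in each cell. After $N$ iterations one obtains a collection $\mathcal{C}_N$ of disjoint cells (bounded by arcs or CLE$_4$ loops) together with a piecewise-constant height function $H_N$ whose jumps across each arc/loop are $\pm 2\lambda$ with i.i.d.\ fair-coin signs, and conditionally independent Dirichlet GFFs $(\Gamma_c)_{c \in \mathcal{C}_N}$ inside the cells, giving
\[
\hat\Gamma \;\stackrel{d}{=}\; H_N + \sum_{c \in \mathcal{C}_N} \Gamma_c \mathbf{1}_c
\]
in the distributional sense. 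Moreover, as $N \to \infty$ the cell diameters shrink to $0$ almost surely for Lebesgue-a.e.\ point (the exceptional set being the corresponding carpet of Lebesgue measure zero).

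Fix smooth test functions $f, g$ compactly supported in the open semi-disc. On one hand,
\[
\mathbb{E}[\hat\Gamma(f)\,\hat\Gamma(g)] \;=\; \iint G^{\mathcal{ND}}(x,y)\, f(x)\, g(y)\, dx\, dy.
\]
On the other hand, conditional independence of $H_N$ and the $\Gamma_c$ given the arc/loop structure splits the same covariance as
\[
\mathbb{E}\bigl[H_N(f)\, H_N(g)\bigr] \;+\; \sum_{c \in \mathcal{C}_N} \mathbb{E}\!\left[\iint_{c \times c} f(x)\, G_c(x,y)\, g(y)\, dx\, dy\right],
\]
where $G_c$ is the Dirichlet Green's function in the cell $c$. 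Since the signs are i.i.d.\ and contribute $\pm 2\lambda$ for each arc/loop that surrounds a point, $\mathbb{E}[H_N(x)\, H_N(y)] = (2\lambda)^2 \, \mathbb{E}[N_N(x,y)]$, where $N_N(x,y)$ counts only the first $N$ generations of arcs/loops disconnecting both points from the semi-circle.

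Let $N \to \infty$. The cell-GFF remainder tends to $0$ by dominated convergence: for $x \neq y$, the two points eventually lie in different cells and so $G_c(x,y) = 0$ for large $N$, while $G_c \le G^{\mathcal{ND}}$ supplies a uniform integrable bound. Monotone convergence gives $\mathbb{E}[N_N(x,y)] \nearrow \mathbb{E}[N(x,y)]$. Comparing the two expressions for the covariance and using that $f, g$ are arbitrary smooth test functions yields
\[
\iint G^{\mathcal{ND}}(x,y)\, f(x)\, g(y)\, dx\, dy \;=\; (2\lambda)^2 \iint \mathbb{E}[N(x,y)]\, f(x)\, g(y)\, dx\, dy,
\]
and continuity of both kernels off the diagonal upgrades this to the pointwise identity.

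The main obstacle is controlling the Dirichlet remainder term uniformly in $N$ and showing that the cells shrink to points a.s., together with the a priori finiteness of $\mathbb{E}[N(x,y)]$ for $x \neq y$. The shrinking follows from the analogous property for the nested CLE$_4$ of the Dirichlet GFFs (inherited from the Dirichlet case via the coupling of Proposition \ref{propoND}, since the ALE itself is shared), and the finiteness of $\mathbb{E}[N_N(x,y)]$ uniformly in $N$ is built into the argument via the identification with (a lower bound on) $G^{\mathcal{ND}}(x,y)/(2\lambda)^2$.
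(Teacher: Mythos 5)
Your proof follows essentially the same approach as the paper's (very brief) argument: the covariance of the field at $x$ and $y$ reduces to the covariance of the piecewise-constant height produced by the nested decomposition, which is $(2\lambda)^2$ times the expected number of shared separating arcs/loops, with the cell-remainder term vanishing as the nesting refines. One minor imprecision to note: the cells adjacent to the segment $I$ carry mixed Neumann--Dirichlet GFFs rather than pure Dirichlet ones, but this does not affect the argument since those fields are still conditionally centered and their Green's functions are dominated by $G^{\mathcal{ND}}$ in the semi-disc.
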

In particular, when $x$ and $y$ are  both on the segment $[-1, 1]$, then the Dirichlet-Neumann Green's function $G (x,y)$ in $D$ (mind that when $x$ and $y$ are on the real axis 
$G(y,x) \sim \pi^{-1} \log (1 / |y-x|)$ as $y \to x$, i.e., it explodes twice faster as when $x$ is not on the real axis) is
exactly equal to $(2\lambda)^2 = \pi /2$ times the expected number of nested ALE arcs that disconnects them both from the semi-circle in Figure \ref {pic9} 
(this number of arcs is then the same for both fields $\tilde \Gamma$ and $\hat \Gamma$).

\begin{figure}[ht!]
\includegraphics[scale=0.5]{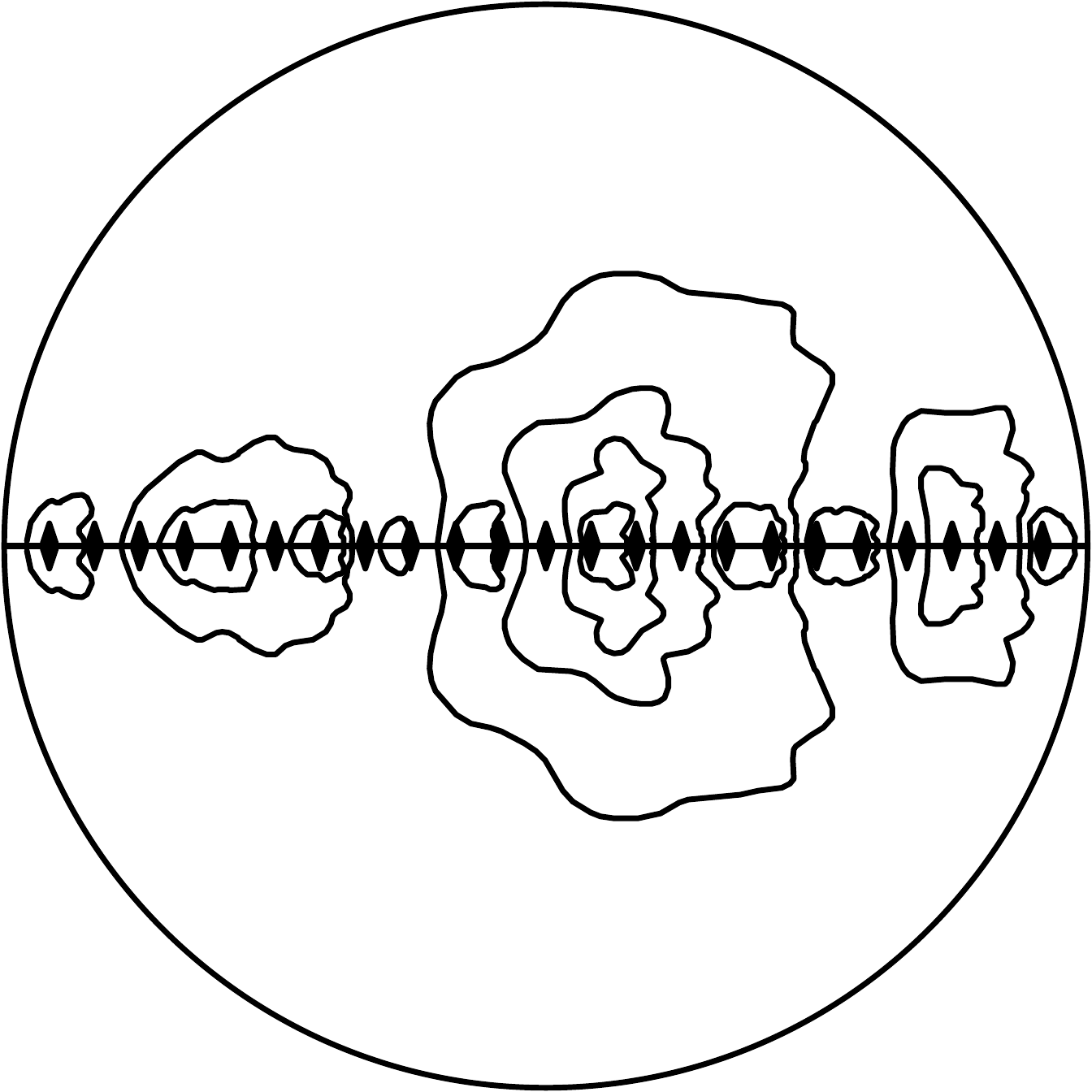}
\quad
\includegraphics[scale=0.5]{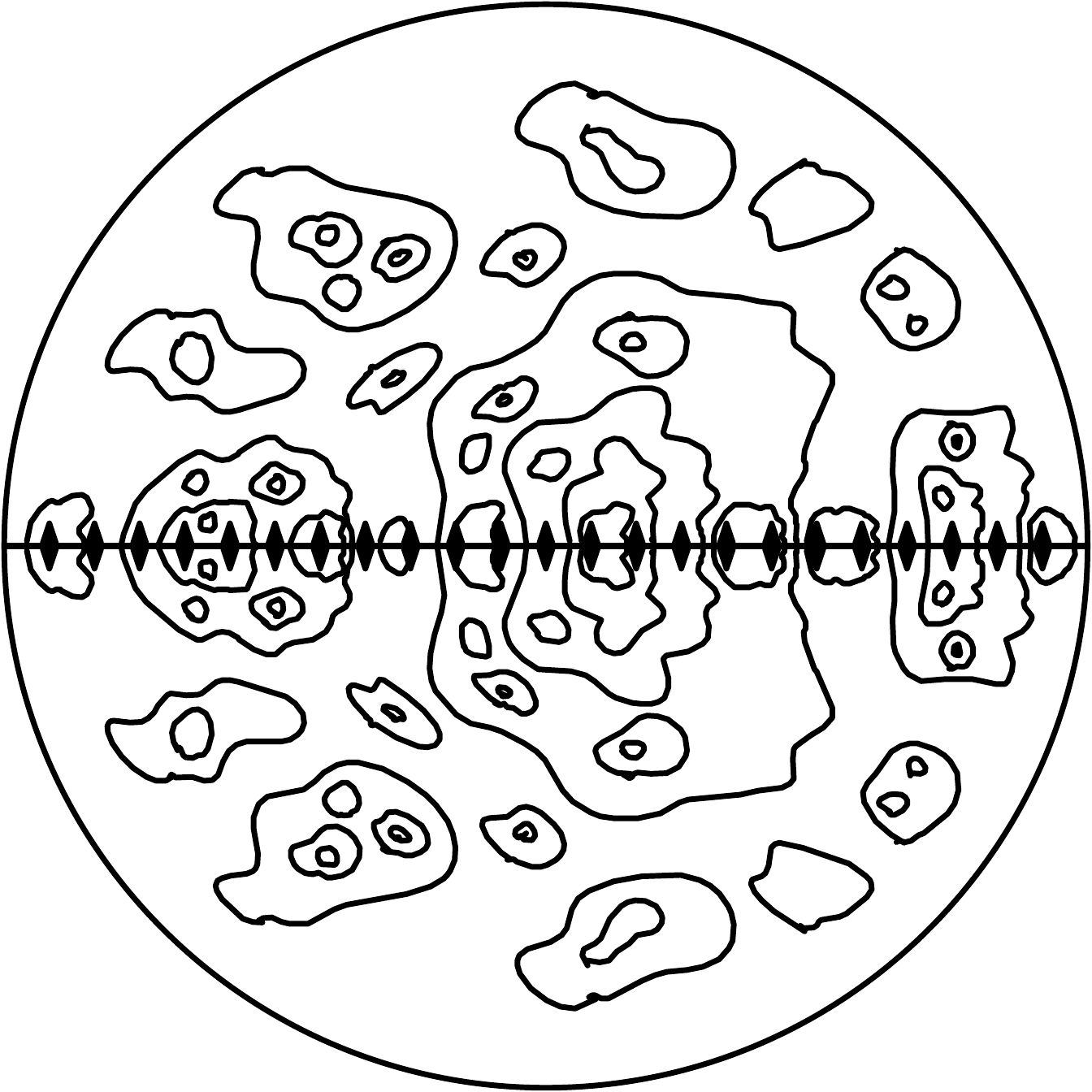}
  \caption {Left: Symmetrizing the boundary touching ALE for mixed Dirichlet-Neumann conditions creates a symmetric CLE$_4$-type picture. 
  Right: One can then complete with independent usual CLE$_4$ in the connected components of the complement in the top half of the picture, 
   toss an i.i.d.coin for the sign of the height-gap for each of the loops of the obtained figure, and one reconstructs a GFF with mixed Neumann-Dirichlet boundary conditions in the top semi-disc. If one forces the height-gaps of the 
   loops that intersect the horizontal axis to alternate, then one constructs the Dirichlet GFF.}
  \label{pic10}\end{figure}

\section {Relation to soups of reflected loops and consequences}
\label {S5}

In the previous sections, we have generalized  the construction of ALEs and CLE$_4$ that had been performed for the Dirichlet GFF to the cases of the Neumann GFF and of the GFF with mixed 
Neumann-Dirichlet boundary conditions.
The goal of the present section is to explain how to generalize the results of \cite {ShW,Lupu,Lupu2,QianW} that relate CLE$_4$ and the Dirichlet GFF to Brownian loop-soups, to the case of 
Neumann boundary conditions. 
Here, we assume that the reader has these papers (and in particular \cite {QianW}) in mind or at hand, as we will refer to them frequently. 

It is again more convenient to first focus on the case of mixed Dirichlet-Neumann boundary conditions -- the case of purely Neumann boundary conditions can then 
be deduced via a similar limiting procedure as above. Let us for instance choose to work in the semi-disc $D = \{z \ :  \ |z| < 1 , \Im (z) > 0 \}$ 
with Dirichlet boundary conditions on the semi-circle (that we will denote by $\partial$) and Neumann boundary conditions on the segment $I= [-1,1]$. We can then consider the Brownian motion in $D$, that is killed on $\partial$ and reflected orthogonally on $I$. This is a reversible Markov process, and all the construction of the corresponding loop measure and loop-soups 
works without further ado: 

- One can define the measure $\mu$ on (unrooted) loops of reflected Brownian motions in $D$ (i.e., reflected on $I$ and killed on $\partial$).
One can relate it directly to the usual measure $\mu_\U$ on unrooted Brownian loops in the unit disc as 
follows: Define $\varphi$ the ``folding'' map that is equal to the identity in $\overline D$ and to $z \mapsto \overline z$ on the symmetric image of $D$ with respect to the real axis. 
Then, the image of $\mu_\U$ under $\varphi$ is exactly $2 \mu$ (or equivalently, $\mu$ is the image of $\mu_\U / 2$ under $\varphi$). 

- One can define for each positive $c$, the soup ${\mathcal L}_c$ of reflected loops in $D$ (with reflection on $I$ and killing on $\partial$) as the Poisson point process of loops  with intensity
$c \mu$ (here we choose the normalization of $\mu_\U$ and therefore also $\mu$ such that the occupation time of the loop soup ${\mathcal L}^\U_c$  with intensity $c\mu_\U$ is related directly to the square of the GFF in $\U$ when $c=1$). We see that one way to construct ${\mathcal L}_c$ is to view it as the image under $\varphi$ of the loop-soup ${\mathcal L}^\U_{c/2}$.

- The appropriately normalized occupation time measure of the loop-soup ${\mathcal L}_1$ (that is, for $c = 1$)
is distributed exactly as the appropriately defined square of the GFF with mixed Dirichlet-Neumann boundary conditions in $D$. In summary, {\em the coupling between the loop-soup ${\mathcal L}_1$ and the square of the Neumann-Dirichlet GFF in $D$ works directly.} 

\medbreak

Let us now explain why the loop-soup clusters will give rise to the structures that we studied in the previous section, in the
same way in which loop-soup clusters in $\U$ give rise to CLE$_4$ as shown in \cite {ShW}. Let us first recapitulate the notation that we are using for the different loop-soups. For each $c$, 
${\mathcal L}_c$ will denote the Neumann-Dirichlet loop-soup in the semi-disc $D$ with intensity $c$, 
${\mathcal L}_c^\U$ and ${\mathcal L}_c^D$ will denote the usual Dirichlet loop-soups with intensity $c$ in $\U$ and $D$ respectively, and ${\mathcal L}_c^I$ will denote the set of 
loops of ${\mathcal L}_c$ that intersect and are reflected on $I$. Hence, for instance, ${\mathcal L}_c \setminus {\mathcal L}_c^I$ is distributed like ${\mathcal L}_c^D$, and the image 
of ${\mathcal L}_{c/2}^\U$ under $\varphi$ is distributed like ${\mathcal L}_c$. 

Not surprisingly, the collections ${\mathcal L}_c^I$ will satisfy a chordal conformal restriction property (see \cite {LSWr,Wcrrq} for background on those): 
\begin {lemma}
Consider the upper boundary of the union of all loops in ${\mathcal L}_c^I$. This is a simple curve that satisfies the chordal one-sided conformal restriction property in $D$ with marked point at $-1$ and $1$, with exponent $c / 16$. 
\end {lemma}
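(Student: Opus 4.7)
Let $\eta$ denote the upper boundary of $U := \bigcup_{\ell \in \mathcal L_c^I}\ell$, i.e.\ the boundary of the connected component of $D\setminus U$ that touches the semi-circle $\partial$. The plan is to verify one-sided chordal conformal restriction directly from the Poissonian description of $\mathcal L_c^I$, and then to identify the exponent via folding to the standard Brownian loop measure on $\U$. For any compact hull $K \subset D$ with $K \cap (I \cup \{-1,1\}) = \emptyset$ and with $D \setminus K$ simply connected (so that $\pm 1$ lie on its boundary), let $\phi_K \colon D \setminus K \to D$ be the conformal isomorphism fixing $\pm 1$. Since $K$ sits at positive distance from $I$, the event $\{\eta \cap K = \emptyset\}$ coincides with $\{U \cap K = \emptyset\}$, and the Poisson property of $\mathcal L_c^I$ yields
$$P(\eta \cap K = \emptyset) \;=\; \exp(-c\,\mathfrak m(K)), \qquad \mathfrak m(K) \;:=\; \mu(\ell : \ell \cap I \neq \emptyset,\ \ell \cap K \neq \emptyset).$$
Proving chordal one-sided restriction with exponent $c/16$ therefore amounts to showing that $\mathfrak m(K) = -\tfrac{1}{16}\log[\phi_K'(-1)\phi_K'(1)]$.

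Using the folding identity $\varphi_*\mu_\U = 2\mu$ (with $\bar K$ the reflection of $K$ across $I$), we rewrite
$$\mathfrak m(K) \;=\; \tfrac{1}{2}\,\mu_\U\bigl(\tilde\ell : \tilde\ell \cap I \neq \emptyset,\ \tilde\ell \cap (K \cup \bar K) \neq \emptyset\bigr).$$
Standard Brownian loop-measure identities, together with the restriction property $\mu_\U(\,\cdot\,;\,\ell \cap A = \emptyset) = \mu_{\U\setminus A}(\,\cdot\,)$ and the M\"obius invariance of $\mu_\U$, let us reorganise this as a M\"obius-covariant functional of $K$ with respect to the automorphisms of $\U$ fixing $\pm 1$. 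A restriction-theoretic rigidity argument in the spirit of \cite{LSWr} then forces $\mathfrak m(K) = -\kappa\log[\phi_K'(-1)\phi_K'(1)]$ for a universal constant $\kappa \geq 0$.

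The constant $\kappa = 1/16$ can then be identified either by a short-range asymptotic test — taking $K = \bar B(z_0,r)$ a small disc centred at an interior point $z_0 \in D$ and expanding both sides as $r \to 0$, using the explicit short-range asymptotics of $\mu_\U$ for loops of diameter comparable to $r$ that both visit $z_0$ and reach $I$ — or, more cleanly, by reducing via conformal mapping to an already-known identification of the one-sided restriction exponent for the outer boundary of the set of Brownian loops crossing a chord in $\U$ at intensity $c/2$, the factor $1/2$ coming from the folding and combining with that identity to produce the advertised $c/16$. Simpleness of $\eta$ then follows from the general theory of one-sided chordal restriction measures (\cite{LSWr}), which are always supported on simple curves. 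The principal obstacle is precisely this last step of extracting $\kappa = 1/16$: the structural ingredients (Poissonian description, folding, M\"obius covariance, restriction rigidity) are routine, but pinning down the exact constant requires either a careful short-range loop-measure computation or a clean reduction to an independently established identification in the Brownian loop-soup/restriction literature.
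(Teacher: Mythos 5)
Your structural outline matches the paper's: recognize that the Poissonian description of $\mathcal L_c^I$ together with conformal invariance of the reflected loop measure already forces the one-sided restriction property with some exponent proportional to $c$, and then the whole content of the lemma is in pinning down the proportionality constant. However, you explicitly stop short of carrying out that computation, and that is precisely where the proof lives — everything else you list (Poisson property, folding identity $\varphi_*\mu_\U = 2\mu$, M\"obius covariance, restriction rigidity \`a la \cite{LSWr}) is, as you say, routine. So there is a genuine gap: the lemma is not proved until $\kappa = 1/16$ is established.

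The paper does this by a near-boundary slit test, not an interior-disc test as you tentatively suggest. Working in $\HH$ with the loop measure reflected on $\R_-$ and killed on $\R_+$ (marked points $0,\infty$), one takes $K = [1, 1+i\eps]$ and computes the mass $m(\eps)$ of loops hitting both $\R_-$ and $K$. Unfolding via $z\mapsto\sqrt z$ and the half-plane reflection reduces this to a mass of ordinary Brownian loops hitting $i\R_+$ and one of two tiny slits near $\pm 1$; for small $\eps$ this is $\approx 2\cdot\mathrm{hcap}([1,1+i\eps/2])/2$ times the $\HH$-bubble mass at $1$ hitting $i\R$, which by \cite{LW} is $-S f(1)/6$ for $f(z)=z^2$. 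This gives $m(\eps) = \eps^2/32 + o(\eps^2)$, and matching against $g_\eps'(0)^\alpha = 1 - (\alpha/2)\eps^2 + o(\eps^2)$ yields $\alpha = c/16$. An interior-disc test $K=\bar B(z_0,r)$ would be considerably less tractable, since neither $-\log[\phi_K'(-1)\phi_K'(1)]$ nor the relevant loop mass has clean closed-form asymptotics in that geometry; the slit-near-a-Dirichlet-boundary-point test is the one that makes the bubble-measure identity bite. Your second suggestion, reducing to an ``already-known identification'' of the exponent for loops crossing a chord in $\U$ at intensity $c/2$, is not something available off the shelf in the form you would need; the paper derives the constant from scratch rather than importing it.
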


\begin {proof}
Note first that the fact that the upper boundary of the union of all loops in ${\mathcal L}_c^I$ satisfies the one-sided conformal restriction property for some positive exponent $ \alpha$ that is a constant
multiple of $c$ follows immediately from the conformal invariance of the reflected loop measure, and the definition of ${\mathcal L}_c^I$. It therefore only remains to identify the 
multiplicative constant $1/16$. 
The fact that the image under $\varphi$ of $\mu_\U$ is $2 \mu$ makes it possible to compute directly the mass for $\mu$ of certain families of loops. 

It will be convenient to work in the upper half plane. Consider the loop-measure for Brownian motion reflected on $\R_-$ and killed on $\R_+$, 
and let us estimate the mass $m(\eps)$ of the set of loops that touch both $\mathbb{R}_-$ and the very small vertical segment $[1,1+\eps i]$. By applying the $z \mapsto \sqrt{z}$ map and by considering the reflection of the first quarter plane along its vertical boundary, we get that it will be sufficient to estimate the mass of the (non-reflected) Brownian loops in $\mathbb{H}$ that intersect both the vertical half-line $i\R_+$ and the union of the two segments $[1,1+i\eps/2]$ and $[-1,-1+i\eps/2]$. When $\eps$ is small, this is approximately twice the mass of the loops that intersect $i\R$ and $[1,1+i\eps/2]$ (the mass 
of the set of loops that intersect both segments being a smaller order term). 
But this mass of the loops in the upper half-plane that intersect both $i\R_+$ and $[1,1+i\eps/2]$ is approximately
hcap$([1,1+i\eps/2])/2=\eps^2/16$ times the mass of Brownian bubbles in $\mathbb{H}$ rooted at $1$ and intersecting $i\R$.
By \cite{LW}, the mass of such bubbles is given by $-1/6$ times the Schwarzian derivative of the map $z\mapsto z^2$ at the point $1$, i.e., to $-Sf (1) / 6$ for $f(z) = z^2$. 
We can then conclude that $m(\eps)=\eps^2/32+o(\eps^2)$.

In order to relate this to the restriction exponent, we can recall that 
the probability that no loop in the Poisson point process $L_c$ of Brownian loops in $\HH$ (reflected on $\R_-$ and killed on $\R_+$) 
with intensity $c$  intersect both $\R_-$ and $[1,1+\eps i]$ is given by $\exp(-cm(\eps))$ which behaves therefore like $1-c\eps^2/32+o(\eps^2)$ in the $\eps \to 0$ limit. 
On the other hand, if $g_\eps$ denotes the conformal map from $\mathbb{H}\setminus [1,1+\eps i]$ onto $\HH$ that leaves the points $0,\infty$ invariant and such that $g_\eps(z)\sim z$ as $z\to \infty$, then $g'_\eps(0)=1-\eps^2/2+o(\eps^2)$. The probability that a one-sided restriction measure on $\R_-$ in $\mathbb{H}$ of exponent $\alpha$ does not intersect  $[1,1+\eps i]$ is given by $g'_\eps(0)^\alpha=1-(\alpha/2)\eps^2+o(\eps^2)$. Therefore we conclude that indeed $\alpha=c/16$.
\end {proof}

We can already note that this value $1/16$ is not surprising given our previous results relating the GFF with Neumann boundary conditions to SLE$_4 (-1)$, and the fact that 
  the upper boundary of the union of an independent restriction sample of exponent $1/16$ in $D$ attached to $I$ with all the loop-soup clusters in ${\mathcal L}^D_1$ that it intersects 
 is an SLE$_4 (-1)$, see \cite {WW2}. Hence, {\em the first SLE$_4 (-1)$ layer of the ALE-type construction is obtained by boundary touching clusters of Brownian loops of ${\mathcal L}_1$}. 
 This is a first step towards relating the previous level lines of the Neumann GFF constructions to Brownian loop-soup construction of the SLE$_4 (-1)$ and of the square of the GFF -- which will 
 be explained at the end of the present section.

 \begin{figure}[ht!]
\includegraphics[scale=0.5]{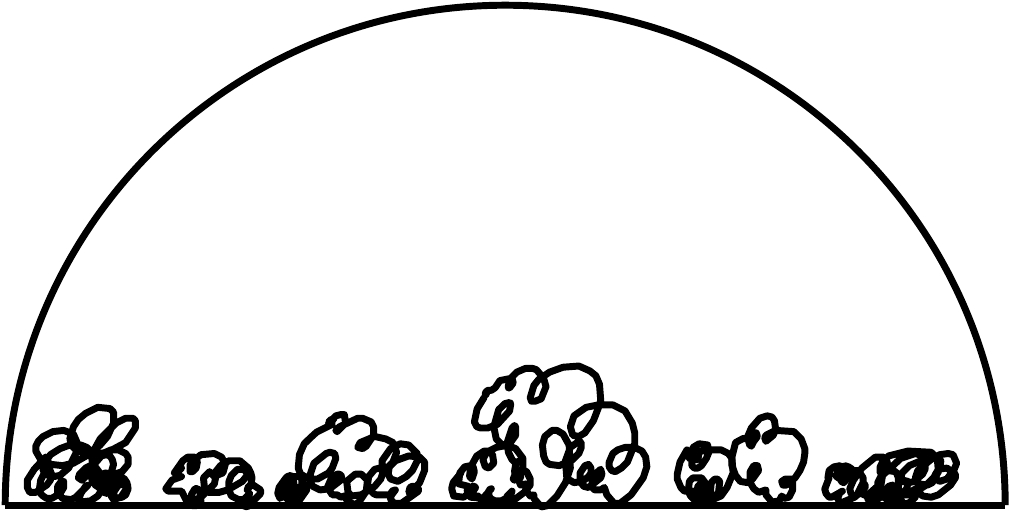}
\quad 
\includegraphics[scale=0.5]{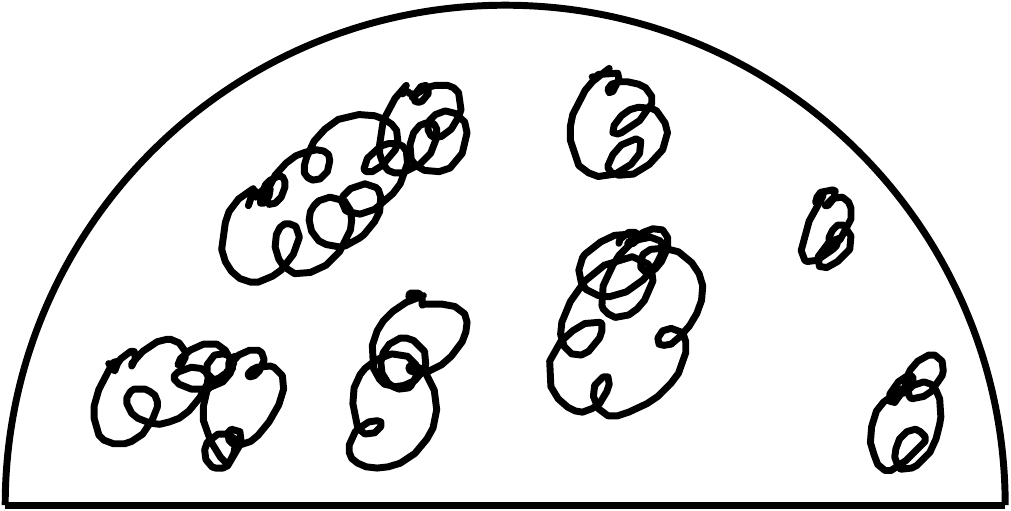}
\quad
\includegraphics[scale=0.5]{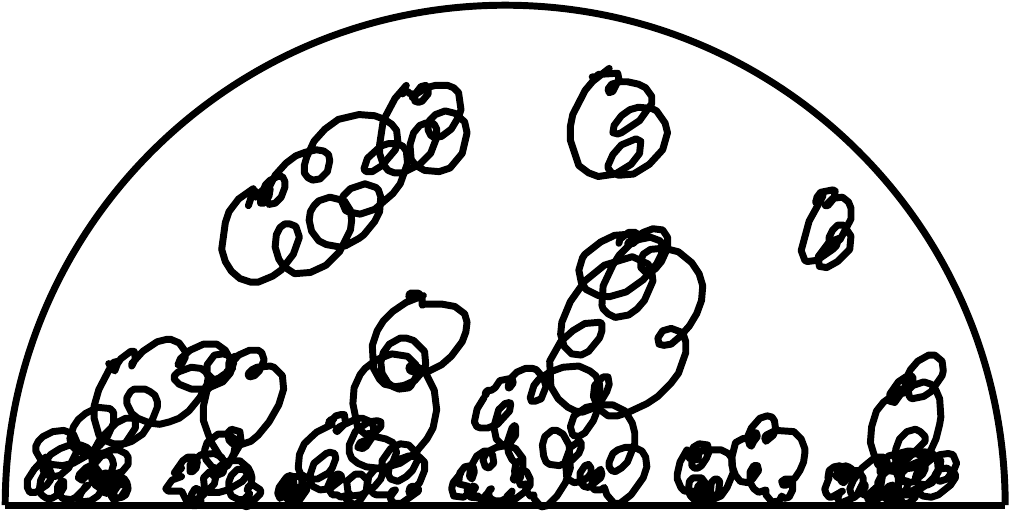}
  \caption {Sketch of ${\mathcal L}_c^I$ (left), ${\mathcal L}_c^D$ (middle) and ${\mathcal L}_c$ (right). The upper boundary of the union of the loops on the left picture is a restriction sample with exponent $c/16$.
  The upper boundary of union of the cluster touching $I$ in the right picture is an SLE$_{\kappa(c)} (\rho(c))$ -- when $c=1$, it is an SLE$_4 (-1)$. }
  \label{pic11}\end{figure}

 Let us now describe some consequences of the identification of this exponent $c/16$ for general $c \le 1$: 
 \begin {proposition}
 \label{previousproposition}
  Consider the loop-soup ${\mathcal L}_c$ with mixed boundary conditions and intensity $c$. Then, the upper boundary of the union of the loop-soup clusters that do touch $I$ (as depicted in Figure \ref {pic11}) is 
 an SLE$_\kappa (\rho)$ curve where $\kappa \in (8/3, 4]$ is related to $c$ by the usual identity $c = (6 - \kappa)( 3 \kappa - 8) / (2 \kappa)$ and 
 $$ \rho = \rho(c) = \sqrt {\frac {(8- \kappa) (\kappa -2) } 8 } - \frac { 8-\kappa } 2  .$$
 \end {proposition}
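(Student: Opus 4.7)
The plan is to decompose the loop-soup $\mathcal{L}_c$ as the (independent) disjoint union $\mathcal{L}_c^I \cup (\mathcal{L}_c \setminus \mathcal{L}_c^I)$, the second piece being distributed as a usual Dirichlet loop-soup $\mathcal{L}_c^D$ in $D$. With this decomposition, the union of clusters of $\mathcal{L}_c$ that touch $I$ is obtained by starting from the hull $K_0 := \bigcup \mathcal{L}_c^I$ and then inductively adjoining every Dirichlet loop of $\mathcal{L}_c^D$ that meets the current set. Hence the curve in the proposition is the upper boundary of $K_0$, enlarged by the clusters of an independent Dirichlet loop-soup of intensity $c$ that it intersects.

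The previous lemma supplies the crucial input: the upper boundary of $K_0$ is a one-sided chordal conformal restriction sample in $D$ attached to $I$, going from $-1$ to $1$, with exponent $\alpha = c/16$. It therefore remains to identify the boundary of ``a one-sided restriction sample of exponent $c/16$ adorned with an independent Dirichlet loop-soup of intensity $c$''.

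This identification is exactly the content of the Werner--Wu description \cite{WW2}: such an adorned restriction sample has as its upper boundary an $\mathrm{SLE}_{\kappa}(\rho)$ curve, where $\kappa=\kappa(c)\in(8/3,4]$ is given by the usual central-charge relation $c=(6-\kappa)(3\kappa-8)/(2\kappa)$ and where $\rho$ is determined by the one-sided restriction identity
\[
\alpha \;=\; \frac{(\rho+2)(\rho+6-\kappa)}{4\kappa}.
\]
(At $c=1$, $\kappa=4$, $\alpha=1/16$ this recovers $\rho=-1$, i.e.\ the $\mathrm{SLE}_4(-1)$ already used in Section~\ref{S4}.)

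To extract $\rho(c)$, I would substitute $\alpha=c/16$ into this identity. Using $c\kappa/4=(6-\kappa)(3\kappa-8)/8$ and writing $x=\rho+2$, the equation becomes $x^2+(4-\kappa)x-(6-\kappa)(3\kappa-8)/8=0$, whose discriminant simplifies to $(\kappa-2)(8-\kappa)/2$; choosing the root that reduces to $\rho=-1$ at $\kappa=4$ gives the formula in the statement. The main obstacle, I expect, is the middle step: strictly speaking \cite{WW2} is quoted here only for $c=1$, so one must verify that the argument there (which relies only on the conformal invariance of the restriction sample and of the Brownian loop-soup, together with the conformal Markov characterization of $\mathrm{SLE}_\kappa(\rho)$) extends verbatim to every $c\in(0,1]$, with the same identity relating $\alpha$, $\rho$ and $\kappa$. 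Once this general statement is in hand, the explicit formula for $\rho(c)$ is just the elementary quadratic computation above.
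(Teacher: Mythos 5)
Your proposal is correct and takes essentially the same route as the paper: decompose $\mathcal{L}_c$ into $\mathcal{L}_c^I$ and an independent Dirichlet loop-soup $\mathcal{L}_c^D$, invoke the preceding lemma for the exponent $c/16$, apply the Werner--Wu identification of a restriction sample adorned with an independent CLE$_\kappa$/loop-soup as an $\mathrm{SLE}_\kappa(\rho)$ with $\beta=(\rho+2)(\rho+6-\kappa)/(4\kappa)$, and solve the resulting quadratic. Your worry that the Werner--Wu input might be stated only for $c=1$ is unnecessary: the paper cites \cite{Wcras,WW2} precisely for the general $\kappa$ version with arbitrary restriction exponent $\beta$, which is indeed what those references provide.
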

 
 \begin {proof}
We know that the upper boundary of the union of all loops in ${\mathcal L}_c^I$ forms a one-sided restriction sample with exponent $c/16$, and 
that ${\mathcal L}^D_c$ forms an independent loop-soup in $D$ (and therefore defines a CLE$_\kappa$ for $\kappa (c)$ as in the statement of the proposition). 
The proposition now follows immediately from the following result from  
 \cite {Wcras,WW2}: The upper boundary of the union of a restriction sample of exponent $\beta$ 
 with the   loops of an independent CLE$_\kappa$ that they do intersect is an SLE$_\kappa (\rho)$ curve from $-1$ to $1$ in $D$, 
 where $\rho$ is the unique value in $(-2, \infty)$ such that
$$ \frac {(\rho +2 ) ( \rho + 6 - \kappa ) }{4 \kappa } = \beta. $$ 
\end {proof}
When $c=1$, then $\kappa =4$ and one indeed gets $\rho (1) = -1$. But for instance when $c=1/2$, then $\kappa =3$ and 
$\rho (1/2) = \sqrt {5/8} - 5/2$ which is somewhat unusual expression in the SLE/GFF framework.  

Note that using Theorem 1.6 in \cite {MWu}, one gets the explicit expression of the dimension of the intersection of the ``Dirichlet-Neumann CLE$_\kappa$ carpet'' with this real line. 

\medbreak

This proposition describes the reflected loop-soup construction of the first layer of the ALE of the Neumann-Dirichlet GFF when $c=1$ and of its generalization for $c <1$. 
In order to construct the subsequent lower layers of the ALE, we can notice that the law of the picture that 
one observes when just looking at the trace of the loop-soup ${\mathcal L}_c$ in $D \cap \{ z \ :  \ \Im (z) > \eps \}$ is absolutely continuous 
with respect to the trace of the loop-soup ${\mathcal L}_c^D$ in $D \cap \{ z , \ \Im (z) > \eps \}$ (this can be seen by first noting that  almost surely, the number of 
loops of ${\mathcal L}_c$ that touch both $I$ and $\{z \ :  \  \Im (z) = \eps \}$ is finite, and that one could replace the portions of those loops below $\{ z \ :  \ \Im (z) < \eps /2 \}$ by portions of 
loops that do not touch $I$). 
This absolute continuity makes it possible to deduce from the corresponding result \cite {QianW} for loop-soups in $\U$, the following fact: 
{\sl Conditionally on the upper boundary $\delta$ of the closure of the union of the loop-soup clusters in ${\mathcal L}_c$ that intersect $I$, the conditional distribution of the Brownian loops of ${\mathcal L}_c$ that lie below $\delta$ and do not intersect $\delta$ is exactly a Brownian loop-soup in the domain between $\delta$ and $I$, with intensity $c$ and Dirichlet boundary conditions on $\delta$ and Neumann boundary conditions on $I$.} This is due to the fact that resampling the very small loops that lie near $\delta$ is already sufficient to ensure that the union of those loops with the Brownian loops that do intersect $\delta$ form connected clusters, as explained in \cite {QianW}.

This makes it possible, inside each of the connected components of $D \setminus \delta$ that are squeezed between $\delta$ and $I$
to use the loops of the original loop-soup ${\mathcal L}_c$ in order to construct the next layers of loops and so on. 
In summary: {\sl The construction of nested CLE$_\kappa$ using one single loop-soup that we pointed out in \cite {QianW} can be generalized in order to construct the entire collection of nested 
ALE arcs (i.e., their SLE$_\kappa (\rho)$ generalization for $c < 1$) using one single loop-soup ${\mathcal L}_c$.}

\medbreak

Further results worth pointing out are the following direct consequence of the fact that $\varphi ({\mathcal L}_{c/2}^\U )$ is distributed as ${\mathcal L}_c$. In the following statement, 
we consider $c \le 1$, and we define $\tilde \kappa$ and $\kappa$ in $(8/3, 3]$ resp. in $(8/3, 4]$ to be the values respectively associated to $c/2$ and $c$:  
$$ (6-\kappa)(3 \kappa - 8)/ (2 \kappa) = c \hbox { and } (6 -\tilde \kappa) (3\tilde \kappa - 8) / (2 \tilde \kappa) =  c/2.$$

\begin {corollary} Consider a standard CLE$_{\tilde \kappa}$ in the unit disc (viewed as a random collection of disjoint loops in the unit disc) 
 and consider its image under $\varphi$. In this picture, one now has collections of possibly overlapping simple loops in $D \cup I$, that can be decomposed into connected components. Then, the upper boundary of the union of all connected components that touch $I$ is distributed exactly like the SLE$_{\kappa} (\rho)$ path described in Proposition \ref {previousproposition}. 
 \end {corollary}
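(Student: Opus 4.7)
The plan is to deduce this corollary directly from the distributional identity $\varphi({\mathcal L}^\U_{c/2}) \stackrel{d}{=} {\mathcal L}_c$ established earlier in this section, combined with the Sheffield--Werner loop-soup construction of CLE \cite{ShW} and with Proposition~\ref{previousproposition}. First, I would realize CLE$_{\tilde\kappa}$ in $\U$ concretely by coupling it to a Brownian loop-soup ${\mathcal L}^\U_{c/2}$: its loops $(\gamma_k)$ are taken to be the outermost boundaries of the ${\mathcal L}^\U_{c/2}$-clusters $(K_k)$. Then I apply $\varphi$ to the whole configuration. The image $\varphi({\mathcal L}^\U_{c/2})$ is a sample of the reflected loop-soup ${\mathcal L}_c$ in $D$, and each CLE$_{\tilde\kappa}$ loop $\gamma_k$ is mapped to a loop $\varphi(\gamma_k)$ in $\overline D$ that is, by construction, the outer boundary (facing the semicircle $\partial$) of the folded cluster $\varphi(K_k)$.

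Next, I would match cluster structures after folding. Two folded CLE loops $\varphi(\gamma_k)$ and $\varphi(\gamma_{k'})$ meet in $D\cup I$ if and only if $\varphi(K_k)\cap\varphi(K_{k'})\ne\emptyset$, i.e.\ if and only if the corresponding reflected loop-soup clusters are merged into the same ${\mathcal L}_c$-cluster. Transitively, each connected component of $\bigcup_k \varphi(\gamma_k)$ that touches $I$ is exactly the union of those folded CLE loops whose underlying ${\mathcal L}^\U_{c/2}$-clusters get $\varphi$-chained into one of the ${\mathcal L}_c$-clusters touching $I$, and the upper boundary (facing $\partial$) of the union of these components coincides pointwise with the upper boundary of the union of those ${\mathcal L}_c$-clusters. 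Proposition~\ref{previousproposition} then identifies the latter upper boundary as an SLE$_\kappa(\rho)$ path with the parameters stated in the corollary, which concludes the argument.

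The main obstacle I expect is the second step: the topological bookkeeping to verify that the outermost-boundary operation commutes with the folding $\varphi$ and with the cluster-merging it induces. Concretely, one must check that $\varphi$ maps the closure of the bounded component of $\U\setminus\gamma_k$ onto the closure of the region of $D\cup I$ bounded by $\varphi(\gamma_k)$ together with a (possibly empty) subsegment of $I$, so that the outer boundary of $\varphi(K_k)$ in $D\cup I$ is really $\varphi(\gamma_k)$; one then needs to confirm that when several such folded regions get identified via $\varphi$-chain intersections into a single ${\mathcal L}_c$-cluster, the outer boundary of their union coincides with the outer boundary of that resulting cluster. Both facts are geometrically transparent from the definition of $\varphi$, but writing them out carefully is where most of the work would lie.
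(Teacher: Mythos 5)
Your argument is precisely the one the paper has in mind: the text introduces this corollary as a ``direct consequence'' of the distributional identity $\varphi({\mathcal L}^\U_{c/2}) \stackrel{d}{=} {\mathcal L}_c$ together with the Sheffield--Werner realization of CLE$_{\tilde\kappa}$ as outer cluster boundaries of ${\mathcal L}^\U_{c/2}$ and Proposition~\ref{previousproposition}, and your three steps spell out exactly that implication. One small imprecision worth noting: the asserted equivalence between $\varphi(\gamma_k)\cap\varphi(\gamma_{k'})\neq\emptyset$ and the merging of the corresponding reflected clusters is not literally an ``if and only if'' (after folding one cluster may sit strictly inside the fill of another without the two outer boundaries meeting), but any such hidden loop is then covered by the larger fill and cannot contribute to the upper boundary, so the identification of the upper boundaries -- which is all that is needed -- still goes through as you describe.
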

 
Here, the case $c=1$ turns out to be special as it allows to relate directly the usual CLE$_3$ (which is the scaling limit of the Ising model (see \cite {BH}) 
directly to the Neumann-Dirichlet GFF (note that just as the previous corollary, the following statement does not mention loop-soups but that its proof  very much uses them) -- we write this as a separate statement for future reference
(see Figure \ref {pic12} for an illustration of the result): 
\begin {corollary}
Consider a standard CLE$_3$ in the unit disc (viewed as a random collection of disjoint loops in the unit disc), and consider its image under $\varphi$. In this picture, one now has collections of possibly overlapping simple loops in $D \cup I$, that can be grouped into connected components. Then, the upper boundary of the union of all connected components that touch $I$ is distributed 
exactly like the SLE$_4 (-1)$ path that defines the first-layer of the ALE of the Dirichlet-Neumann GFF.
 \end {corollary}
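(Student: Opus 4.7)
The plan is to present this as a direct specialization of the preceding corollary to the value $c=1$, together with the identification (already made in the course of proving Theorem \ref{mainthm}) between SLE$_4(-1)$ and the first-layer curve of the ALE for the mixed-boundary GFF.

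First, I would compute the parameters. For the inner $\tilde\kappa$, I solve $(6-\tilde\kappa)(3\tilde\kappa-8)/(2\tilde\kappa) = c/2 = 1/2$; clearing denominators gives $3\tilde\kappa^2 - 25\tilde\kappa + 48 = 0$, whose solutions are $\tilde\kappa \in \{3, 16/3\}$, and only $\tilde\kappa = 3$ lies in the relevant interval $(8/3,3]$. For the outer parameters, $c = 1$ gives $\kappa = 4$ and, from the formula $\rho(c) = \sqrt{(8-\kappa)(\kappa-2)/8} - (8-\kappa)/2$, we get $\rho(1) = \sqrt{4\cdot 2/8} - 2 = -1$. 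So the previous corollary, specialized to $c=1$, says exactly that the upper boundary of the union of all connected components that touch $I$ in $\varphi(\text{CLE}_3)$ is an SLE$_4(-1)$ from $-1$ to $1$ in $D$.

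Next, I would recall why this specific SLE$_4(-1)$ is the first-layer ALE curve. The observation directly before Figure \ref{pic11} identifies the upper boundary of the union of boundary-touching clusters of the reflected loop-soup ${\mathcal L}_1$ with the SLE$_4(-1)$ of the ALE (via the construction of \cite{WW2} together with Lemma \ref{l3}). Since the Sheffield--Werner theorem identifies CLE$_3$ with the collection of outer boundaries of clusters of ${\mathcal L}_{1/2}^\U$, and since $\varphi({\mathcal L}_{1/2}^\U)$ is distributed as ${\mathcal L}_1$, the clusters of loops in $\varphi(\text{CLE}_3)$ that touch $I$ coincide as planar sets with the clusters of ${\mathcal L}_1$ that touch $I$. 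Hence both produce the same upper boundary, and this boundary is the desired first-layer ALE curve.

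The only point that requires a little care is the identification of clusters: CLE$_3$ loops in $\U$ are disjoint, but their images under $\varphi$ may intersect along $I$, and two originally disjoint CLE$_3$ loops can merge into one connected component after folding. What must be checked is that the outer envelope of the boundary-touching connected components of $\varphi(\text{CLE}_3)$ agrees with the outer envelope of the boundary-touching clusters of ${\mathcal L}_1$. This is immediate once one notes that a cluster of ${\mathcal L}^\U_{1/2}$ touches $\R$ (from either side) iff its image under $\varphi$ touches $I$, and that the outer cluster boundary is a topological invariant of the cluster (it is the boundary of the infinite component of the complement on the $D$-side), so it coincides with the outer envelope of the folded CLE$_3$ loops making up that cluster. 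With this small bookkeeping verified, the corollary follows.
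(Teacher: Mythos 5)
Your proposal is correct and follows essentially the same route as the paper: the paper presents this corollary as a direct specialization (to $c=1$) of the preceding corollary, which in turn rests on $\varphi({\mathcal L}^\U_{c/2})\sim{\mathcal L}_c$, the Sheffield--Werner identification of CLE$_{\tilde\kappa}$ with cluster boundaries of ${\mathcal L}^\U_{c/2}$, Proposition~\ref{previousproposition}, and the earlier identification of the SLE$_4(-1)$ boundary of $I$-touching ${\mathcal L}_1$ clusters with the first ALE layer. The parameter computations $\tilde\kappa=3$, $\kappa=4$, $\rho=-1$ and the bookkeeping observation about how folding affects cluster membership are the right supporting details, and match what the paper leaves implicit.
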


 \begin{figure}[ht!]
\includegraphics[scale=0.55]{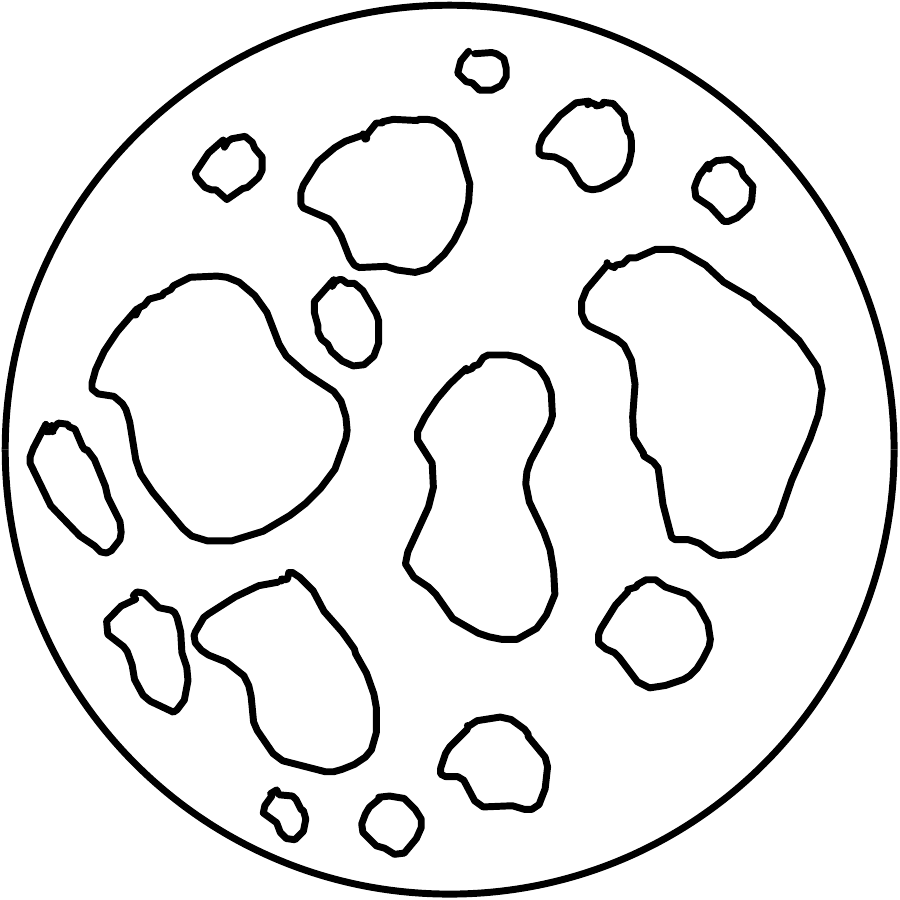}
\quad 
\includegraphics[scale=0.55]{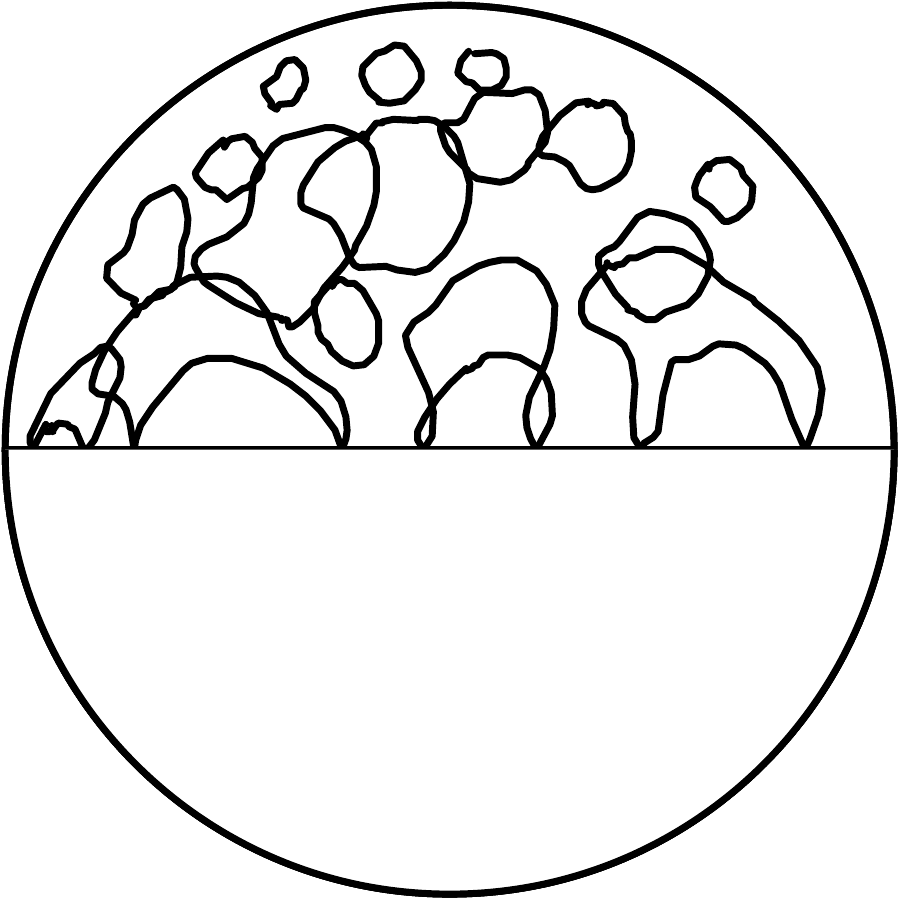}
\quad
\includegraphics[scale=0.55]{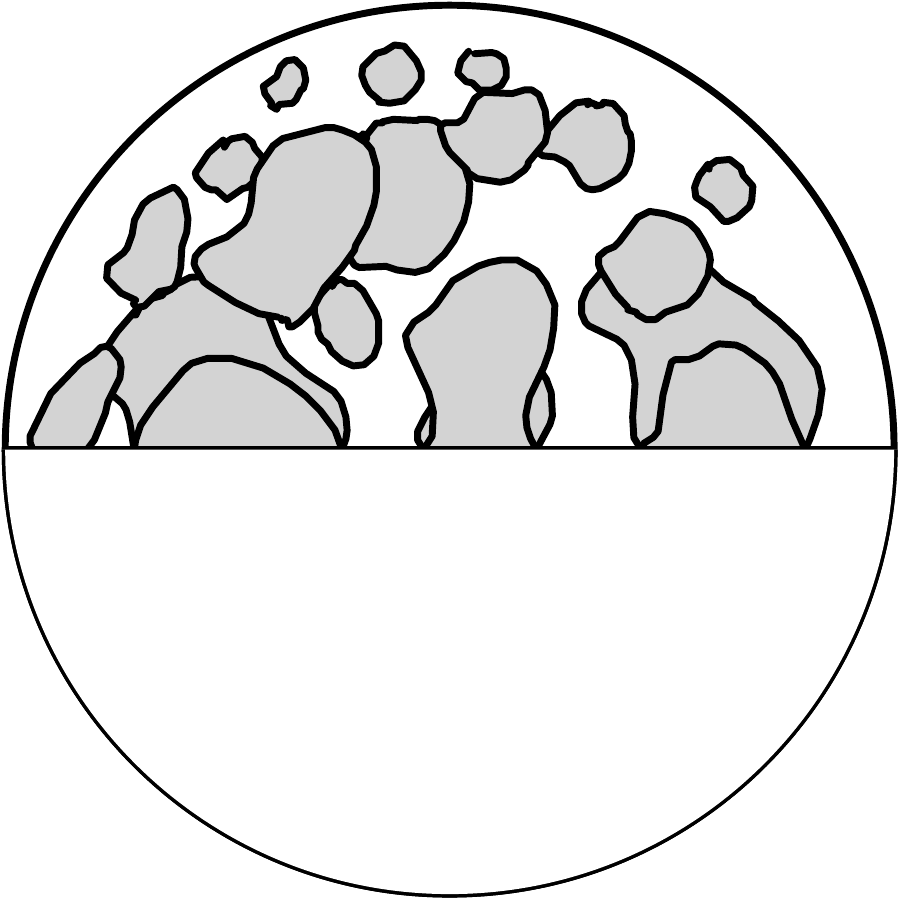}
  \caption {From CLE$_3$ to the Neumann-Dirichlet CLE$_4$ (sketch): A CLE$_3$ (left), the overlay of the CLE$_3$ and its symmetric image (middle), the obtained clusters form the Neumann-Dirichlet CLE$_4$ (right).}
  \label{pic12}\end{figure}

 \medbreak
 Note that one can also study in a similar way the soup ${\mathcal L}_c^D \cup {\mathcal L}_{c'}^I$ for all $c \in [0,1] $ and $c' \ge 0$. This 
 gives rise in the same way to nested SLE$_\kappa (\rho)$ layers 
 for $\kappa = \kappa (\rho)$ and all possible $\rho > -2$ (keeping $c$ fixed and letting $c'$ vary), More precisely, 
  the upper boundary of the union of the obtained clusters that do touch $I$ forms an SLE$_\kappa (\rho)$ 
process where $\kappa \in [8/3, 4] $ and $\rho > -2$ are determined by 
$$
c= \frac { (6-\kappa) (3 \kappa -8)}{2 \kappa} \hbox { and } \frac {c'}{16} = \frac { ( \rho +2) (\rho + 6 - \kappa) } {4 \kappa}. 
$$

Let us state the special case $c=1/2$ and $c'=1$ as a separate corollary: 
\begin {corollary}
Consider the overlay of ${\mathcal L}_{1/2}^D$ with an independent ${\mathcal L}_{1}^I$. 
Or equivalently, consider a loop-soup ${\mathcal L}_{1/2}^\U$ in the unit disc, remove all loops 
that are strictly contained in the lower half-plane, and take the image under $\varphi$ of the remaining ones. Then,
the upper boundary of the union of the obtained clusters that do touch $I$ forms an SLE$_3 ( -3/2)$ process.
\end {corollary}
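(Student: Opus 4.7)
The plan is to deduce this corollary directly from the general two-parameter statement displayed just above it (with $c \in [0,1]$ and $c' \ge 0$), after first justifying the equivalence of the two descriptions of the collection of loops.

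For the equivalence, I would decompose $\mathcal{L}_{1/2}^{\U}$ into three disjoint subfamilies: (a) the loops whose trace is contained in $D$, (b) those whose trace is contained in the reflection of $D$ across $I$, and (c) those that cross $I$. Restriction of the Brownian loop measure $\mu_{\U}$ to loops in $D$ is exactly the Dirichlet loop measure of $D$, so family (a) is distributed as $\mathcal{L}_{1/2}^{D}$, and by the reflection symmetry of $\mu_{\U}$, the image of (b) under $\varphi$ is an independent copy of $\mathcal{L}_{1/2}^{D}$. Since $\varphi(\mathcal{L}_{1/2}^{\U})$ is distributed as $\mathcal{L}_1 = \mathcal{L}_1^{D} \cup \mathcal{L}_1^{I}$, and the superposition of two independent $\mathcal{L}_{1/2}^{D}$'s has the law of $\mathcal{L}_1^{D}$, the image of (c) under $\varphi$ must be distributed as $\mathcal{L}_1^{I}$ and independent of the rest. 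Removing the strictly lower loops (family (b)) and applying $\varphi$ to what remains therefore gives exactly $\mathcal{L}_{1/2}^{D} \cup \mathcal{L}_1^{I}$ with the required independence.

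With the equivalence in hand, it remains to specialize the formulae
\[
c = \frac{(6-\kappa)(3\kappa-8)}{2\kappa}, \qquad \frac{c'}{16} = \frac{(\rho+2)(\rho+6-\kappa)}{4\kappa}
\]
to $c = 1/2$ and $c' = 1$. The first equation reduces to $(6-\kappa)(3\kappa-8) = \kappa$, whose unique solution in $(8/3, 4]$ is $\kappa = 3$ (one checks: $3 \cdot 1 = 3$). Substituting $\kappa = 3$ into the second equation gives $(\rho+2)(\rho+3) = 3/4$, i.e.\ $\rho^2 + 5\rho + 21/4 = 0$, whose roots are $\rho = -3/2$ and $\rho = -7/2$. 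Only $\rho = -3/2$ lies in the admissible range $(-2, \infty)$, so the upper boundary of the union of clusters that touch $I$ is an SLE$_3(-3/2)$.

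There is no real obstacle here beyond the bookkeeping in the first paragraph: the one point that must be handled carefully is checking that the image under $\varphi$ of the loops of $\mathcal{L}_{1/2}^{\U}$ that cross $I$ is genuinely an independent $\mathcal{L}_1^{I}$ (rather than some correlated piece); this is exactly where the identity $\varphi_*(\mu_{\U}/2) = \mu$ combined with reflection symmetry of $\mu_{\U}$ is used, as outlined above. Once the two loop descriptions are matched, the corollary is nothing more than a quadratic computation applied to the general formula already proved.
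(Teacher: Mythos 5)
Your proposal is correct and follows the same route as the paper: the corollary is obtained by specializing the general two-parameter formula for $\mathcal{L}_c^D\cup\mathcal{L}_{c'}^I$ at $c=1/2$, $c'=1$, and the two quadratics give $\kappa=3$ and $\rho=-3/2$ exactly as you compute. Your explicit decomposition of $\mathcal{L}_{1/2}^{\U}$ into loops in $D$, loops in the reflected copy, and loops crossing $I$, and the Poisson-process independence argument, correctly justify the equivalence of the two descriptions, which the paper leaves implicit by invoking $\varphi(\mathcal{L}_{c/2}^{\U})\overset{d}{=}\mathcal{L}_c$.
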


It is interesting to emphasize that the various players in this result can be related directly to the Ising model \cite {BH,HK,cdhks}: 

- The CLE$_3$ in $D$ defined by ${\mathcal L}_{1/2}^D$ is the scaling limit of the boundary touching $+$ cluster for a critical Ising model with $+$ boundary conditions
(more precisely, the CLE$_3$ carpet which is the set surrounded by no loop has the law of this scaling limit). 

- The SLE$_3 ( - 3/2)$ process is
the scaling limit of
the lower boundary of the $+$ Ising cluster touching $\partial$ for the critical Ising model with $+$ boundary conditions on $\partial$ and 
free boundary conditions on $I$ (mind that the free boundary conditions for the Ising model are a priori not related to the Neumann boundary conditions for the GFF). 

Hence, this last corollary can be viewed as a rather simple coupling between the scaling limit of the critical Ising model with mixed boundary conditions ($+$ on $\partial$ and free on $I$) and the scaling limit of the Ising model with $+$ boundary conditions.

\medbreak
Finally, we come back to the $c=1$ loop-soup and to its relation with the GFF. We now have seen three couplings in this Dirichlet-Neumann setting: 

- Between the ALE structure and the GFF (this is the coupling described in the previous sections).

- Between the ALE structure and clusters of reflected loops (that we have just pointed out). 

- Between the soups of reflected loops and the square of the GFF (this is the isomorphism theorem \`a la Le Jan \cite {LJ} that works here without further ado).

We now briefly argue that, just as for the analogous case of the Dirichlet GFF \cite {QianW}, these three couplings can be made to coincide. Let us browse through the steps of the arguments: 
 The idea will be to realize the coupling of the GFF, of the ALE structure and of the loop-soup as the limit when the mesh-size goes to $0$ of a corresponding coupling built on metric graphs. In this discrete case, the 
key is the observation by Lupu that one can sample the GFF starting from its square (that is defined via the soup of Brownian loops on the metric graph) by choosing independent signs for different 
loop-soup clusters, because they correspond exactly to the excursion sets away from $0$ by the GFF on the metric graph. So,
in the discrete metric graph structure, one has a soup of reflected Brownian loops that create clusters and the square of the GFF
with the mixed Neumann-Dirichlet conditions, and one can the construct the Neumann-Dirichlet GFF itself { by taking the square root of 
the density of the occupation time measure, and sampling independently the signs of the GFF for each cluster.} 

When the mesh-size goes to $0$, we know that: 

- The discrete GFF with these mixed boundary conditions converges to some continuous GFF  
{ $\Lambda$ (the convergence in law can be seen from the convergence of the covariance functions because one is looking at Gaussian processes). }

{ - The discrete loop soup converges to a continuous one -- see \cite {LTF} for related convergence of loop-measure issues. }

-  The renormalized occupation time of the discrete loop-soup converges to { $T$, which is the renormalized occupation time of the continuous loop-soup.} 

{ - As explained in \cite{QianW}, we can make these convergences simultaneous such that $T$ is exactly the renormalized square of $\Lambda$.}

- The  clusters of macroscopic Brownian loops are exactly described via the SLE$_4 (-1)$ and CLE$_4$  structures that we described above (this is a statement about the Brownian loop-soup). On the other hand, they can only be larger than the limits of the clusters of the random walk loops when the mesh of the lattice goes to $0$.

Let us outline in a handwaving way one possible way to 
argue that the limit of the cluster of the cable system loops cannot be strictly larger than the corresponding cluster of the limiting macroscopic Brownian loops. 
First, we can notice that for both the version on cable systems as for the continuum version, when one 
restricts the loop-soup to the subset of $D$ that lies at distance at least $\eps$ from 
$I$, the picture is absolutely continuous with respect to the corresponding picture with Dirichlet boundary conditions on $I$ instead of Neumann boundary conditions. Furthermore, this Radon-Nikodym derivative (on the cable systems) does  
converge when the mesh of the lattice goes to $0$ (for each fixed $\eps$). 
If one combines this with the main result of Lupu \cite {Lupu2} (that for in the Dirichlet case, the scaling limit of the cable-system loop-soup clusters are never strictly larger than the clusters of macroscopic Brownian 
loops), we conclude that the only way in which in the Dirichlet-Neumann case, the limit of the cable-system loop-soup clusters could be strictly larger than the clusters of Dirichlet-Neumann Brownian loops would be that 
in the latter case, two boundary-touching clusters of loops would end up being connected ``through $I$''. In other words, the scaling limit of the cable-system loop-soup clusters would consist of 
 families of boundary touching clusters of Brownian loops that are only ``glued together'' along the boundary. 
 { In any case, the boundaries of the discrete clusters do converge to the boundaries of the continuous ones. This already enables us to apply the same arguments as in \cite{QianW}, and argue that the outermost  level lines of $\Lambda$ at height $\lambda$ and $-\lambda$ (which consists of 
 the SLE$_4(-1)$ and the CLE$_4$ loops above it) are exactly the boundaries of the loop soup clusters. This is because we can explore the loop soup from $\partial$ and all the arguments in \cite{QianW} remain valid as long as we do not discover any clusters touching $I$.

But on the other hand, conditionally on the SLE$_4(-1)$, $\Lambda$ restricted to each connected component under the SLE$_4(-1)$ curve 
have $\pm2\lambda$ boundary conditions according to i.i.d. fair coins. If some of the discrete clusters are indeed glued together along $I$ in the limit, then $\Lambda$ restricted to the corresponding components under the SLE$_4(-1)$ curve would take the same sign, instead of independent ones.
This leads to a contradiction. 
Hence the limit of the discrete clusters are exactly equal to the continuous ones.}

\begin{figure}[ht!]
\includegraphics[scale=0.6]{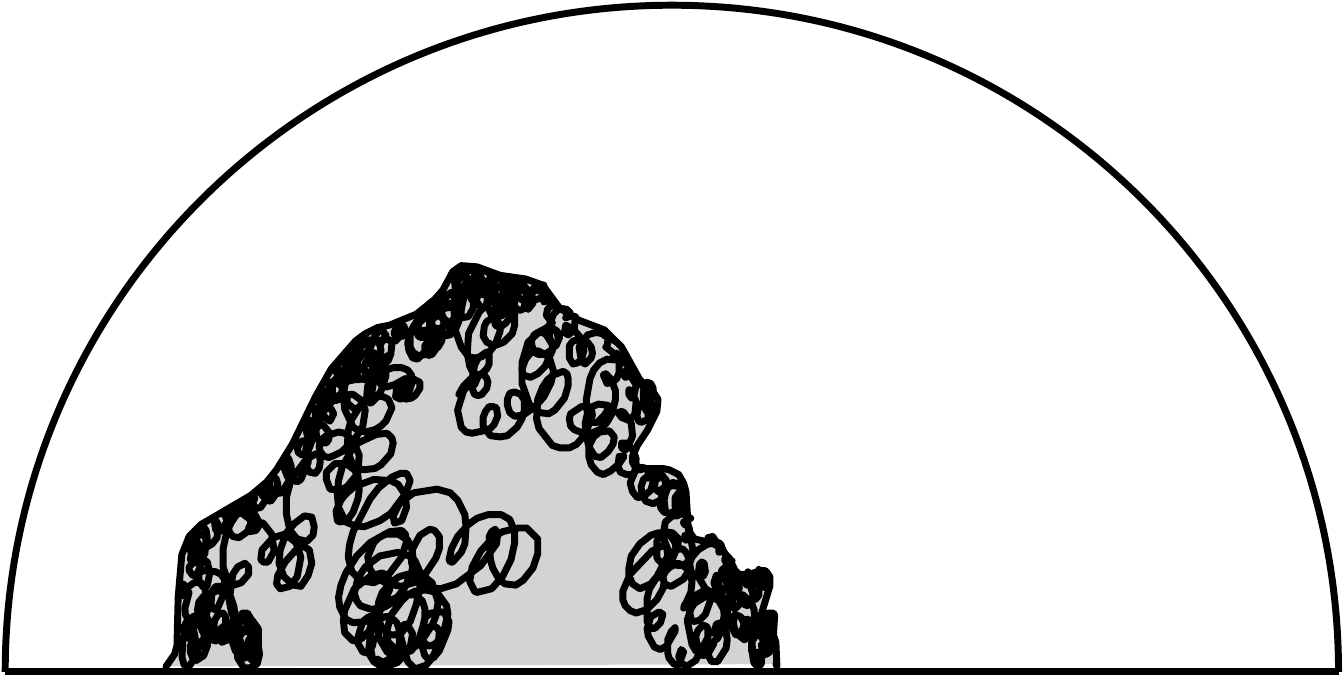}
\includegraphics[scale=0.6]{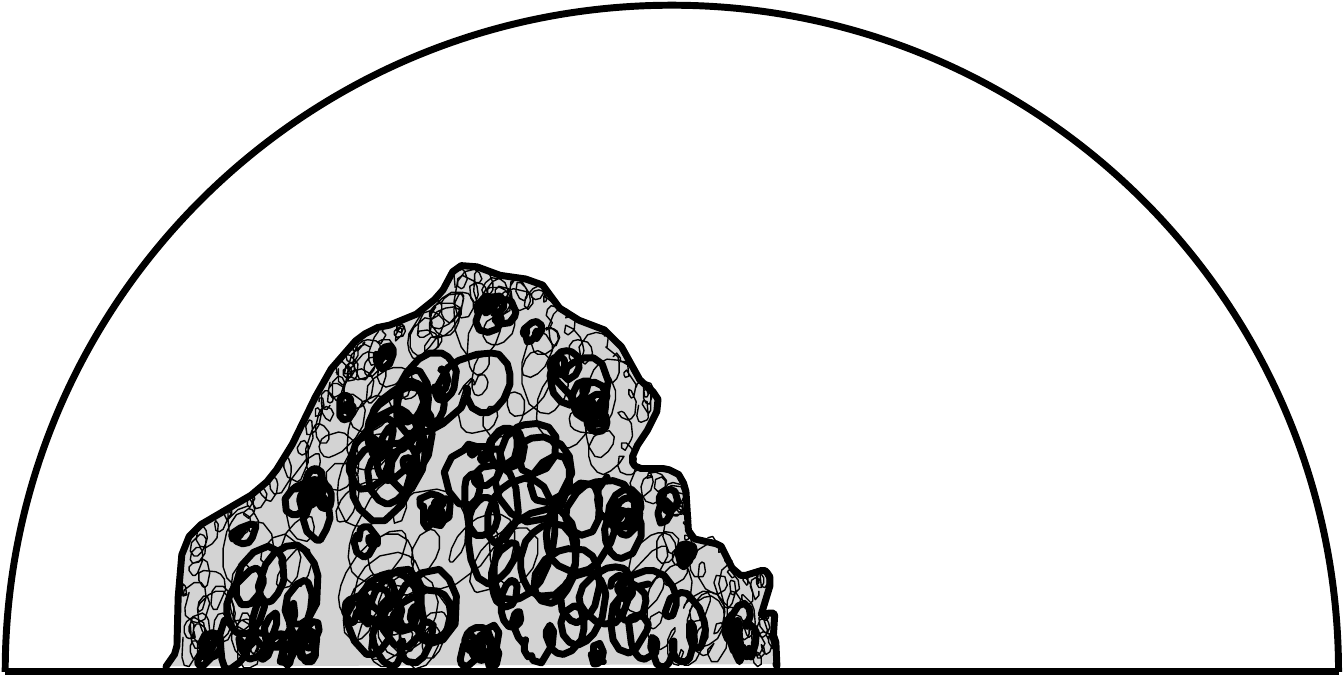}
  \caption {Illustration of the loop-soup cluster decomposition. Left: The upper boundary $\delta_j$  of a loop-soup cluster of reflected Brownian loops for $c=1$, with the loops in the Neumann-Dirichlet loop-soup that do touch this upper boundary. The union of these loops is then distributed like a Poisson point process of Brownian excursions away from this upper boundary and reflected on the real line. Right: The other Brownian loops under $\delta_j$ form a conditionally independent Neumann-Dirichlet soup of reflected loops in the domain inbetween $\delta_j$ (Dirichlet boundary) and the real line (Neumann boundary).}
  \label{decomposition}\end{figure}
\medbreak

With this in hand, it is then possible 
to adapt the arguments of the proof in the Dirichlet case that we presented in \cite {QianW}. 
This allows to generalize also the corresponding result of \cite {QianW} to this case of mixed Neumann-Dirichlet boundary conditions:
Conditionally on $\delta$, the trace of the set of Brownian loops in ${\mathcal L}$ that do intersect $\delta$ is distributed exactly like the union of 
a Poisson point process of Brownian excursions in $U(\delta)$, away from $\delta$ and reflected on $I$, with intensity $\alpha = 1/4$  (see Figure \ref {decomposition}). 
In other words, the relation between the previous three couplings and Dynkin's isomorphism work all in the same way as in the Dirichlet setting.

\section {Shifting the scalar version of the Neumann GFF and related comments}
\label {Sshift}

We now come back to the proof of Proposition \ref {propshift}. 
By symmetry, we can restrict ourselves to the case where $a \le 0$. 

Recall first very briefly how the coupling between the Neumann GFF, its realization $\Lambda$ and the corresponding ALE $A$ was established: 
We considered a GFF  $\hat \Gamma_u$ in the unit disc with mixed boundary conditions, free on a large arc and $-\lambda$ on the remaining very small arc $\partial_1^u$ around 
the boundary point $-1$. Then, we coupled it with a GFF $\tilde \Gamma_u$ with $-\lambda$ boundary conditions on the small arc and $0$ boundary conditions on the large arc, in such a way that the collection of boundary touching level arcs with height in $2 \lambda \Z$ of the two fields did exactly coincide. 
Then, we shifted $\hat \Gamma_u$ by a multiple of $2\lambda$ so that the height of the cell containing the origin was equal to that of $\tilde \Gamma_u$, and called this field $\Lambda_u$. Finally, we did let the size of the short arc shrink, and looked at the limit of $\Lambda_u$ and of its collection of boundary-touching level arcs with height in $2 \lambda \Z$, which provided the coupling of $\Lambda$ with the ALE. 

Let us define $b  = -\lambda + a \le -\lambda$,  and consider a GFF $\hat \Gamma^b_u$ in the unit disk, with mixed boundary conditions just as $\hat \Gamma_u$ except that the boundary condition on $\partial_1^u$ is $b$ 
instead of $-\lambda$. Note that this field is distributed exactly as $a + \hat \Gamma_u$. In particular, the boundary-touching level arcs of $\hat \Gamma^b_u$ with height in $2\lambda \Z$ will be distributed exactly like the boundary-touching level arcs of $\hat \Gamma_u$ with height in $-a + 2 \lambda \Z$. The argument at the end of the proof 
of the theorem showed that for a well-chosen coupling, the shifted fields $\hat \Lambda_u$ could be made to coincide away from $-1$ for sufficiently small $u$. This implies that the law of the collection of boundary-touching level arcs of $\hat \Gamma^b_u$ with height in $2 \lambda \Z$ does stabilize as well, so that in the coupling of the theorem, the 
law of the collection 
of boundary-touching level arcs of $\Lambda$ with height in $-a + 2 \lambda \Z$ is the limit as $u \to 0$ (for an appropriate topology) of the law of the collection of boundary-touching level arcs of $\hat \Gamma_u^b$ with height in $2 \lambda \Z$. 
In order to prove Proposition \ref {propshift}, we therefore just have to argue that this limiting law is that of an ALE, i.e., that it does not depend on the value of $b$. 
This will be a direct consequence of the following lemma: 
\begin {lemma}
\label {smalllemma}
For each $b< -\lambda$ and for a given $u_0$, there exists an arc $\hat \gamma$ that separates the small arc $\partial^{u_0}_1$ from $1$ in $\U$, such that given $\hat \gamma$,
the conditional law of $\hat \Gamma_{u_0}^b$ restricted 
to the connected component $\hat U$ of $\U \setminus \hat \gamma$ that has $1$ on its boundary, is exactly a mixed Neumann-Dirichlet GFF with Neumann boundary conditions of the unit circle part of the boundary of $\hat U$, and $- \lambda$ boundary conditions on $\hat \gamma$. 
\end {lemma}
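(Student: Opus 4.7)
The plan is to construct $\hat\gamma$ as the level line of $\hat\Gamma_{u_0}^b$ starting from one of the endpoints of $\partial_1^{u_0}$, and then to invoke the standard level line coupling theorem to identify the conditional law on the $1$-side. Let $w_+, w_-$ denote the two endpoints of $\partial_1^{u_0}$ and think of $w_+$ as the starting point. Near $w_+$ the boundary conditions of $\hat\Gamma_{u_0}^b$ are Dirichlet equal to $b$ on the $\partial_1^{u_0}$ side and Neumann on the other side. The candidate $\hat\gamma$ is the arc separating a neighborhood of $\partial_1^{u_0}$ from $1$ along which the field reads $+\lambda$ from the $\partial_1^{u_0}$ side and $-\lambda$ from the $1$ side; in particular on $\hat U$ one sees $-\lambda$, which is exactly what the lemma requires.

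Following the warm-up of Section \ref{S4} (see Figures \ref{pic4}--\ref{pic6}), the natural candidate for $\hat\gamma$ is an SLE$_4(\rho_L;\rho_R)$ curve from $w_+$ with force points immediately adjacent to $w_+$: the left weight $\rho_L = -1 - b/\lambda$ encodes the Dirichlet value $b$ in place of the canonical $-\lambda$ on $\partial_1^{u_0}$, and the right weight $\rho_R = -1$ is the Neumann force point that already appears in the SLE$_4(-1)$ discussion of the warm-up. Since $b < -\lambda$ we have $\rho_L > 0 > -2$, so SLE$_4(\rho_L;-1)$ is a continuous simple curve; moreover, the conformal-Markov-plus-absolute-continuity reasoning used in the warm-up to identify SLE$_4(-1)$ applies verbatim to identify SLE$_4(\rho_L;-1)$ as the relevant level line, the only modification being that the test martingale $M_t$ from the proof of Lemma \ref{l3} now uses a mean function $h_t$ carrying the effective boundary values $b, +\lambda, -\lambda$ along the three arcs involved rather than $-\lambda, +\lambda, -\lambda$.

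I would then verify that $\hat\gamma$ is almost surely a simple continuous curve from $w_+$ to $w_-$ that separates $\partial_1^{u_0}$ from $1$. Continuity up to an endpoint on $\partial_1^{u_0}$ follows from standard SLE$_4(\rho)$ boundary-hitting results with $\rho_L > -2$; the endpoint must be $w_-$ by running the analogous level line from $w_-$ and invoking uniqueness, and the separation is automatic since both $w_\pm \in \overline{\partial_1^{u_0}}$ and $\hat\gamma \subset \U$. Finally, the Schramm--Sheffield level line coupling, in the mixed Dirichlet--Neumann extension provided by Izyurov--Kyt\"ol\"a \cite{IK}, gives that conditionally on $\hat\gamma$ the field restricted to $\hat U$ is a GFF with Dirichlet value $-\lambda$ on $\hat\gamma$ and Neumann conditions on the arc of $\partial\U$ lying in $\partial\hat U$, which is exactly the statement of the lemma.

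I expect the main obstacle to be the rigorous handling of the Neumann force point at $w_+^+$: encoding a Neumann transition as a $\rho=-1$ force point is justified by the warm-up in the case $b=-\lambda$, but for general $b<-\lambda$ one must revisit the martingale computation of Lemma \ref{l3} to confirm that the coin-tossing cancellation mechanism and the quadratic variation identity remain intact with the shifted left boundary value. A safe fallback is to approximate the Neumann arc by a thin $\eps$-collar of Dirichlet data (mimicking the limiting procedure used at the end of the proof of Theorem \ref{mainthm}), apply the classical Dirichlet SLE$_4(\rho)$ level line theorem for the approximating field, and pass to the limit $\eps \to 0$ via the already established convergence of the corresponding Green's functions.
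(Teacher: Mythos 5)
Your proposal takes a genuinely different route from the paper --- you try to realize $\hat\gamma$ as a \emph{level line} (a thin local set with fixed values $\pm\lambda$ on its two sides), whereas the paper realizes it as the \emph{outer boundary of a first-passage set} built from a reflected-loop-soup / excursion Poisson process on a cable-system approximation and a Dynkin-isomorphism argument. These are not two descriptions of the same object, and your version runs into a sign obstruction that is fatal for exactly the range of $b$ appearing in the lemma.

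Here is the problem. Fix the standard Schramm--Sheffield convention that you are implicitly using when you write $\rho_L=-1-b/\lambda$: for an SLE$_4(\rho_L;\rho_R)$ from the marked point with force points immediately on either side, the $0$-level line has $-\lambda$ on the side adjacent to the arc carrying boundary value $-\lambda(1+\rho_L)$ and $+\lambda$ on the other side. Plugging in $\rho_L=-1-b/\lambda$ indeed gives $-\lambda(1+\rho_L)=b$, so that identification is the correct one. But then the side of $\hat\gamma$ facing $\partial_1^{u_0}$ carries the value $-\lambda$ and the side facing $1$ carries $+\lambda$ --- exactly the \emph{opposite} of what you assert in the sentence ``the field reads $+\lambda$ from the $\partial_1^{u_0}$ side and $-\lambda$ from the $1$ side,'' and the opposite of what the lemma needs. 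If instead you insist on $+\lambda$ on the $\partial_1^{u_0}$-side so that $\hat U$ sees $-\lambda$, the corresponding weight is $\rho=b/\lambda-1$, which is $<-2$ for every $b<-\lambda$; such an SLE$_4(\rho)$ is not a continuous simple curve, and this range is not a corner case but the full hypothesis of the lemma. One can try to salvage things with a level line at height $-2\lambda$ rather than $0$ (sides $-3\lambda$ and $-\lambda$, weight $\rho_L=-3-b/\lambda>-2$), and that is essentially the alternative route the paper sketches at the very end of Section~\ref{Sshift} --- but that construction imposes a specific constant value on the $\partial_1^{u_0}$-side as well and only lines up without further iteration for a restricted window of $b$ (the paper restricts that sketch to an interval and notes it yields Proposition~\ref{propshift} rather than Lemma~\ref{smalllemma} directly), whereas the lemma must hold for every $b<-\lambda$. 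Your ``fallback'' of thickening the Neumann arc into a Dirichlet collar and passing to the limit does not repair this either: the sign issue is present already in the all-Dirichlet approximation.

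The more important structural point is that the lemma only prescribes the value of $\hat\gamma$ on the $\hat U$-side; on the $\partial_1^{u_0}$-side the conditional boundary data is \emph{not} a fixed constant, and this asymmetry is precisely what makes a thin local set the wrong ansatz. The object the paper uses is a non-thin local set: the union of the reflected-loop and excursion clusters (cable-system limit) that touch the Dirichlet arc, whose outer boundary carries the single prescribed value (in the shifted picture, $0$; in the unshifted one, $-\lambda$) seen from $\hat U$, while whatever happens inside is simply the restriction of the field. This is the reflected-boundary analogue of the first-passage-set picture and is precisely why the paper remarks afterward that $\hat\gamma$ ``is not a thin local set.'' If you want to avoid loop-soups, the path suggested by the paper is to use absolute continuity between $\hat\Gamma^b_{u_0}$ and the field with $-\lambda$ on the small arc far away from $-1$, or to run the Lemma~\ref{l3}-style coin-tossing construction of layers of SLE$_4(\rho_1,\rho_2)$ excursions with $\pm$ random insertions; but a single deterministic level line will not do the job.
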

This lemma readily implies Proposition \ref {propshift}. Indeed, we can choose to start with $\hat \Gamma_{u_0}^b$ and then to define $\hat \Gamma_u^b$ for all $u < u_0$ as the image of $\hat \Gamma_{u_0}^b$ via the M\"obius transformation of the unit disc that maps $1$ onto itself and $\partial_1^{u_0}$ onto $\partial_1^u$. 
Then, as $u$ tends to $0$, the diameter of the image of $\hat \gamma$ under this automorphism vanishes. In particular, the conformal transformation that maps $\hat U$ onto $\U$ that keeps $1$ fixed, has a derivative at $1$ that is equal to $1$ and maps $\hat \gamma$ onto some $\partial_v^1$ tends almost surely to the identity map as $u \to 0$ (for the uniform convergence when restricted to any subset of $\U$ that is at positive distance from $-1$). Hence, the collection of boundary-touching level lines with height in $2 \lambda \Z$ of $\hat \Gamma_u^b$ does indeed 
converge in distribution to an ALE.

Hence, it remains to explain how to derive Lemma \ref {smalllemma}: 
\begin {proof}
By conformal invariance, and shifting the field by $-\lambda$, we can consider a GFF 
 $\cech \Gamma$ in the upper half-plane $\mathbb {H}$ with mixed boundary conditions: Neumann on $\R \setminus [-1,1]$ and equal to $a  \le 0$ on $[-1,1]$. We will
 find an arc $\cech  \gamma$ joining a point in $(- \infty, -1)$ to a point in $(1, \infty)$ for this field, so that conditionally on this arc, the law of the field in the unbounded connected component of its complement is a mixed Neumann-Dirichlet GFF with Neumann conditions on the part of the boundary that is on the real line and $0$ boundary conditions on that arc.
 Such a coupling can in fact be obtained in a number of different ways.  A natural option (we will mention other options after the end of the proof) is to use the coupling of the Dirichlet-Neumann GFF $\cech \Gamma$ with a Dirichlet-Neumann loop-soup in $\HH$ and a Poisson point process of excursions away from $[-1,1]$ in $\HH$ that are reflected on $\R \setminus [-1,1]$ (see Figure \ref {fig:levela}).  This coupling can be viewed as the fine-mesh limit of the corresponding coupling on a cable-system approximation; on the cable system, in order to define the GFF, one considers the clusters of loops+excursions, the square of the GFF is then the intensity of the total occupation time measure, and tosses an independent fair coin to decide the sign of the GFF for each cluster, except those that contain an excursion away from the boundary segment, for which the sign is negative (this is exactly Dynkin's isomorphism theorem phrased in terms of cable-system loop-soups as in Lupu \cite {Lupu}, see also \cite {ALS}).
\begin{figure}[h!]
\centering
\includegraphics[width=0.48\textwidth]{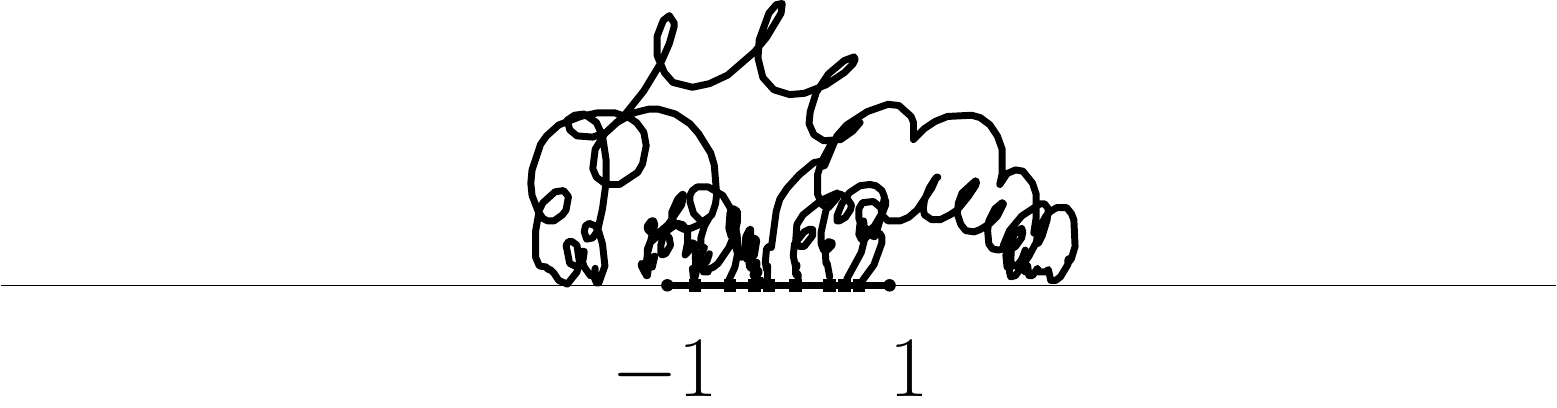}
\includegraphics[width=0.48\textwidth]{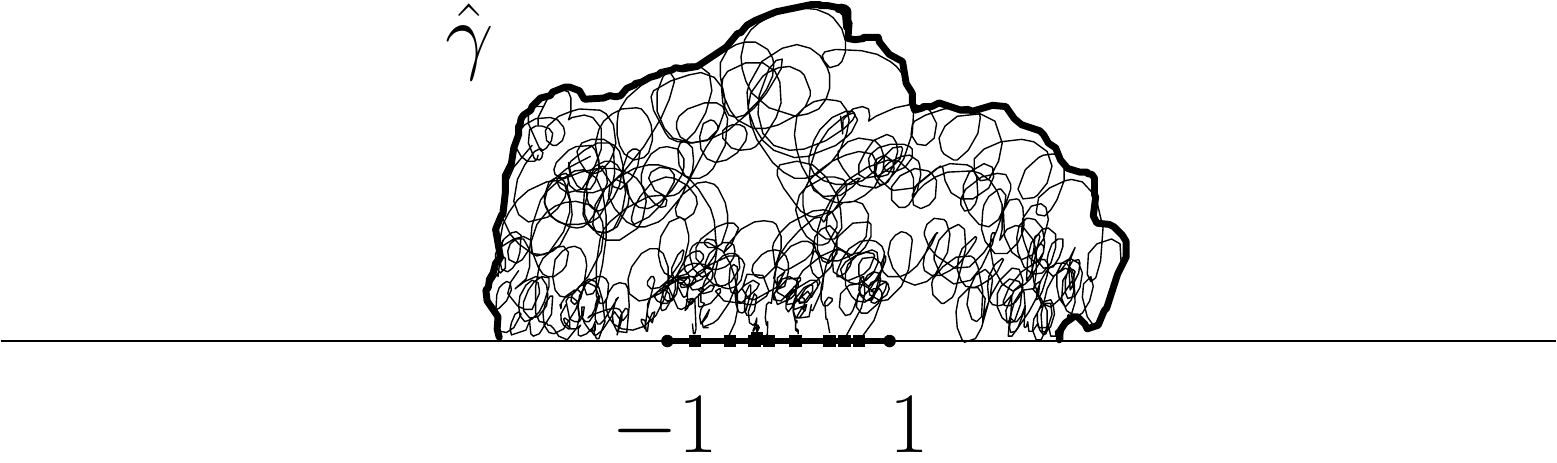}
\caption{The Poisson point process of excursions away from $[-1,1]$ reflected on the other part of $\R$ (left). Adding the loop-soup clusters of the Neumann-Dirichlet loop-soup that intersect them and drawing the outer boundary $\cech \gamma$ (right).}
\label{fig:levela}
\end{figure} 
 On the cable-system, one can look at the union of all connected components that do touch the segment $[-1, 1]$. Conditionally on this set, the law of the 
 cable-system GFF on the complement of this set is clearly a mixed Neumann-Dirichlet GFF, with Neumann boundary conditions on the real line and $0$ boundary conditions on the other boundary points. 
 When the mesh-size of the lattice goes to $0$, we can note that the outermost boundary of this set will tend to the corresponding outermost boundary of the union of all excursion+loop clusters that touch $[-1,1]$, which leads indeed the desired coupling between $\cech \Gamma$ and $\cech \gamma$ (a similar argument was used in the proof of Lemma 6 in our paper \cite {QianW}). 
 \end {proof}

 Let us make some comments about this proof: 
It is worthwhile emphasizing that for the boundary point at infinity, an apparent contradiction is that the expected value of the field $\cech \Gamma$ was $b + \lambda$ while it becomes $0$ for the mixed Neumann-Dirichlet (with zero values) above $\cech \gamma$. This is just due to the fact that the local set consisting of the union of all excursions and loop-soup clusters that they hit is not a thin local set (loosely speaking, it carries mass of the GFF) and that the probability that it is very large does not decay fast (see \cite {Se,ALS} for related considerations).  
 
 Let us note that instead of using the loop-soup approach, a second closely related option would have been to use absolute continuity between the laws of the field $\cech \Gamma$
 and the Neumann-Dirichlet GFF in $\HH$ with $-\lambda$ boundary conditions on $[-1,1]$ instead of $b$ boundary conditions. Indeed, when one restricts these two fields to the complement of a ball of large radius $R$ around the origin, then the Radon-Nykodim derivative between the law of the two tends (in probability) to $1$, so that one can then use directly the level-lines for $\hat \Gamma$ that we described in the previous sections. In some sense, the argument described in our proof was making use of an explicit way to couple these two fields. 
 
Another approach would for instance have been to adapt our proof of Lemma \ref {l3} to the field $\cech \Gamma^b$ for $b \in (-2\lambda, 0)$, i.e., to describe explicitely 
 a curve $\cech \gamma'$ via the concatenation of some SLE$_4 (\rho_1, \rho_2)$-type excursions, using the coupling between GFF with piecewise constant/Neumann boundary conditions and SLE$_4$ type curves before the curve hits the Neumann boundary as derived by Izyurov-Kyt\"ol\"a in \cite{IK}. Here, the boundary conditions on the complement of $\cech \gamma'$
 would be free on $\R \setminus [-1, 1]$, $b$ on the part that is in $[-1,1]$, and they would be exactly as in Lemma \ref {l3} on the curve $\cech \gamma$ ($-\lambda$ on one side, and either $\lambda$ or $-3\lambda$ on the other side). This concatenation of arcs would then be a thin local set. Proposition \ref {propshift} would then follow from this result in a similar manner as from Lemma \ref {smalllemma}. We leave all this to the interested reader.

\medbreak
As promised in Section \ref {S3}, we now discuss briefly the relation between the boundary-touching level-arcs of $\Gamma$ and those of $\Lambda$. 

Let us give some details for the simplest possible case:
Suppose that one traces a level line at height $0$ of the Dirichlet GFF $\Gamma_j$ inside 
a connected component $O_j$ where the boundary conditions for $\Gamma$ are $-\lambda$. 
This is also clearly locally a $-\lambda$-level line 
of $\Gamma$ as long as it does not hit the ALE. If it hits the ALE, 
then this particular level line of $\Gamma$ stays in $O_j$ and bounces off from the boundary of the ALE; it will in fact create a loop that is entirely contained in $O_j$, and it does so in such a way that the $-2 \lambda$ side of that loop lies in the inside of the loop and not on the side of the ALE (so, for instance, if the level-line is traced in such a way to have $-2 \lambda$ on its left side, then when hitting the ALE, it would have to turn left). This level-line loop of $\Gamma$ is then also exactly the boundary of an ALE-cell of $\Gamma_j$ i.e., 
a level-line of $\Gamma_j$ at height $0$ (in fact, in this particular case, it would actually be a CLE$_4$ loop of $\Gamma$, as mentionned before).
Note already that if one would have followed an arc of the same $0$-level-line of $\Gamma_j$ but if the boundary values of the cell $O_j$ (for $\Gamma$) was $+ \lambda$ instead of $-\lambda$,
then the corresponding level line of $\Gamma$ that this arc is part of would have to turn in the other direction:  
When it hits the ALE, the level line of $\Gamma$ would bounce to the right instead of to the left and it would have traced the boundary of a
neighboring ALE cell of $\Gamma_j$. So, we see that the way in which the
level-arcs of $\Gamma_j$ are hooked-up in order to create level-lines of $\Gamma$ very much depend on the sign of the jumps on the ALE arcs that it hits. 

If we now are looking at the same $0$-level line portion of $\Gamma_j$ and try to see what level line of the Neumann GFF 
$\Lambda$ it will be part of, we see that the interaction 
rule with the ALE will be to bounce in one direction or the other depending on the sign of the height-gap along the ALE arc that it is locally hitting. This shows 
immeditely that the corresponding level line of $\Lambda$ will in fact be forced to end on the boundary of the original domain as soon as it successively hits 
ALE arcs with opposite height-gaps (see Figure \ref {RR} for a sketch). 

Hence, there will exist level arcs with height $\lambda$ for $\Lambda$ that do join two boundary points of the domain. 
As on the other hand, we know that this is not the case for $\Gamma$ (the CLE$_4$ loops do not touch the boundary). We can  conclude that the collection of all level lines ${\mathcal A}_{\lambda}$ with height in $\lambda + 2 \lambda \Z$ for $\Lambda$ is not the same as the collection of all level lines with height in $\lambda + 2 \lambda \Z$ for $\Gamma$ (the former is not empty while the latter is empty). 

These local interaction rules can be easily generalized 
to the interaction between level-lines with height in $a + 2 \lambda \Z$ (and therefore of the ALE ${\mathcal A}_a$) with the level-arcs of the ALE ${\mathcal A}_0$ 
for all $a \in (-2\lambda, 0)$ (the previous discussion was dealing with the case where $a  = - \lambda$).

\section {Further questions and remarks}
We conclude this paper mentioning closely related work and work in progress: 
\begin {enumerate}  
 \item As explained in \cite {Ai,APS}, the nested CLE$_4$ (or nested ALEs) approach to the GFF gives a very natural conformally invariant way to approximate the Liouville measures associated 
 to the GFF. The results of the present paper enable (see also \cite {APS}) to do the same for the boundary Liouville measure for the Neumann GFF. 
 \item It is possible to generalize fairly directly most of the loop-soup part of the arguments to the case of oblique reflection instead of orthogonal reflection. For each given reflection angle $\theta = u \pi$ with $u \in (0 , 1)$, one replaces the orthogonal reflection (corresponding to $\theta = \pi /2$) by reflection with angle $\theta$. In particular, one can then look at the soup  of Brownian loops in the semi-disc $D$, with Dirichlet boundary conditions on $\partial$ and reflected with angle $\theta$ on $I$,  and with intensity $c$. Then, a simple computation shows that:
\begin {lemma}
The upper boundary of the union of all loops in this loop-soup that intersect $I$ 
is a simple curve that satisfies the chordal one-sided conformal restriction property in $D$ with marked point at $-1$ and $1$, with exponent $c u (1-u) / 4$. 
\end {lemma}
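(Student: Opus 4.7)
The plan is to follow the orthogonal-case argument from the previous lemma ($u=1/2$, exponent $c/16$), with two modifications. First, I would re-establish the chordal one-sided restriction property with some exponent of the form $c\beta(u)$; second, I would identify $\beta(u) = u(1-u)/4$ via a direct mass computation.

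For the restriction property, observe that the conformal automorphisms of $D$ that fix $\pm 1$ form a one-parameter group (after mapping $D$ to the second quadrant with $\pm 1 \mapsto 0, \infty$, these are the dilations). Each such map preserves $I$ setwise, and being conformal with $I \to I$ it preserves the tangent direction to $I$ at every point, hence preserves the oblique reflection angle. This gives conformal invariance of the obliquely reflected loop soup ${\mathcal L}_c$ under this group, and the same hull-removal argument as for the orthogonal case (remove a hull $A \subset D$ disjoint from $I$, condition on no loop of ${\mathcal L}_c$ intersecting $A$, and map $D \setminus A \to D$ conformally fixing $\pm 1$) then yields the chordal one-sided restriction property. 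Additivity of Poisson point processes under superposition forces the exponent to be of the form $c\beta(u)$ linear in $c$.

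To identify $\beta(u)$, I would reduce as in the previous lemma to the asymptotics as $\eps \to 0$ of $m_u(\eps)$, defined as the mass of loops in $\HH$ (obliquely reflected at angle $u\pi$ on $\R_-$, killed on $\R_+$) that touch both $\R_-$ and the short vertical slit $[1, 1+i\eps]$. Matching $\exp(-c\, m_u(\eps))$ with the one-sided restriction probability $g'_\eps(0)^{c\beta(u)} = 1 - c\beta(u)\eps^2/2 + o(\eps^2)$ reduces the lemma to proving $m_u(\eps) = u(1-u)\eps^2/8 + o(\eps^2)$. The orthogonal case exploited the folding $z \mapsto \sqrt{z}$ to realise reflected loops as ordinary Brownian loops in $\HH$, after which the Lawler--Werner Schwarzian-derivative formula $-Sf(1)/6$ applied to $f(z) = z^2$ supplied the bubble mass. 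No analogous folding is available for general $u$: conformal maps preserve reflection angles, so oblique reflection is ``genuine'' and cannot be transformed away to normal reflection.

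The main obstacle is therefore the direct computation of $m_u(\eps)$. I would carry it out via It\^o's excursion theory for obliquely reflected BM: decompose $m_u(\eps)$ as an integral over $\R_-$ against the local-time measure of the excursion mass of paths from $\R_-$ into $\HH$ that touch $[1, 1+i\eps]$. The excursion measure is governed by the Poisson kernel for the mixed boundary value problem $\Delta h = 0$ in $\HH$, $h = 0$ on $\R_+$, and the oblique Neumann condition $\sin(u\pi)\,\partial_y h + \cos(u\pi)\,\partial_x h = 0$ on $\R_-$, whose solutions are combinations of the power functions $z^u$ and $\overline{z}^u$. Extracting the leading $\eps^2$ asymptotic of the crossing probability to the slit and integrating against the local-time measure should yield exactly the coefficient $u(1-u)/8$. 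Two consistency checks strongly constrain the answer: the formula must recover $m_{1/2}(\eps) \sim \eps^2/32$ from the previous lemma, and it must satisfy $\beta(u) = \beta(1-u)$ by the reflection automorphism $(x,y) \mapsto (-x,y)$ of $D$ (which swaps the marked points $\pm 1$ and exchanges the reflection angles $u\pi$ and $(1-u)\pi$).
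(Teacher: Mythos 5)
The paper offers no proof of this lemma (it writes only ``a simple computation shows that''), so there is no proof-of-record to match against, and the right way to read your proposal is as an attempt to flesh out that claimed simple computation. Your skeleton is sound: obliquely reflected Brownian motion is indeed conformally invariant under the one-parameter group of automorphisms of $D$ fixing $\pm 1$ (these fix $I$ setwise and conformality preserves the contact angle $u\pi$), so the hull-removal argument and the additivity of Poisson point processes give one-sided restriction with an exponent proportional to $c$. The reduction to the slit-mass asymptotic $\exp(-c\,m_u(\eps))=g'_\eps(0)^{c\beta(u)}$, hence to $m_u(\eps)=u(1-u)\eps^2/8+o(\eps^2)$, is also correct, and your two consistency checks -- recovery of $\eps^2/32$ at $u=1/2$ and invariance under $u\mapsto 1-u$ from the reflection $x\mapsto -x$ of $D$ -- are exactly the right sanity checks. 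You are also right that the $z\mapsto\sqrt z$ unfolding used for orthogonal reflection does not carry over: the antiholomorphic doubling step only preserves orthogonal reflection, so no conformal change of variables can turn oblique into normal reflection.

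The genuine gap is that the decisive step, the identity $m_u(\eps)\sim u(1-u)\eps^2/8$, is left as an assertion (``should yield exactly the coefficient $u(1-u)/8$'') rather than carried out. The ingredients you name are the right ones: the mixed Dirichlet/oblique--Neumann harmonic functions are indeed of the form $\mathrm{Im}\big(z^{u}\,g(z)\big)$ with $g$ holomorphic and real on $\R$ (the power $u$ is forced by the condition $\mathrm{Im}\,\big(e^{-iu\pi}\partial h\big)=0$ on $\R_-$), so the oblique Poisson kernel at $x>0$ can be taken as $-\tfrac{1}{\pi x^{u}}\,\mathrm{Im}\!\left(\tfrac{z^{u}}{z-x}\right)$; from this one must extract the mass of the obliquely reflected Brownian \emph{bubble} at $1$ that reaches $\R_-$ (which should come out to $u(1-u)/2$ so that, after multiplying by the usual $\mathrm{hcap}/2=\eps^2/4$ factor, one gets $m_u(\eps)$). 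Until that coefficient is actually computed (e.g.\ by the excursion decomposition you sketch, or by comparing with the orthogonal case via absolute continuity/Girsanov on the local time at $\R_-$), the proof is incomplete -- though admittedly it then matches the paper's own level of detail for this remark-section lemma.
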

One can then in a similar way as in the orthogonal case describe the upper boundary of the union of the loop-soup clusters that touch $I$
as an SLE$_\kappa (\rho)$ process, where $\kappa (c)$ is as before and $\rho$ is the value in $(-2, \infty)$ such that 
$(\rho +2 ) ( \rho + 6 - \kappa )  / ({4 \kappa }) =  c u (1-u) /4$. 
However, caution is needed when one tries to related these loop-soup considerations to Gaussian fields because these reflected Brownian motion are not reversible. 
\item 
Other work in progress related to the present paper involves a loop-soup approach in the spirit of \cite {Wresampling} to SLE-type welding issues. 
Another natural and closely related point under investigation is the relation between such Neumann-Dirichlet type couplings with imaginary flow lines of the GFF (some arguments of the present paper can indeed be fairly directly adapted).  
\end {enumerate}

\subsection*{Acknowledgments}
We acknowledge support of the grant no. 155922 (WQ, WW) and an Early Postdoc.Mobility grant (WQ) of the SNF.
The authors are/were part of NCCR SwissMAP.
We also thank Juhan Aru and Avelio Sep\'ulveda for many stimulating discussions about ALEs, as well as the referees for their comments.

\end{document}